\newtheorem{theorem}{Theorem}[section]
\newtheorem{definition}{Definition}[section]
\newtheorem{lemma}{Lemma}[section]
\newtheorem{remark}{Remark}[section]
\newtheorem{corollary}{Corollary}[section]
\numberwithin{equation}{section}
\renewcommand{\d}{\operatorname{d}}
\newcommand{\A}{\operatorname{A}}
\newcommand{\pl}{\parallel}
\renewcommand{\u}{\mathbf{u}}
\newcommand{\ls}{\leqslant}
\newcommand{\odiv}{\operatorname{div}}
\newcommand{\ra}{\rightarrow}
\newcommand{\eps}{\varepsilon}
\renewcommand{\u}{{\bf u}}
\renewcommand{\v}{{\bf{v}}}
\newcommand{\z}{{\bf{z}}}
\newcommand{\R}{{\mathbb R}}
\date{\today}%%%%%%%%%%%%%%%%%%%%%%%%%%%换成默认的时间，否则显示的是中文时间。
\begin{document}
\title[Mild Solution to IINS]{$L^{p}$ Mild Solution to Stochastic Incompressible inhomogeneous Navier-Stokes Equations}

\author{Yachun Li$^{1}$, Ming Mei$^{2,3,4}$, Lizhen Zhang$^{*,5}$}
\dedicatory{ {\footnotesize\it $^1$School of Mathematical Sciences, CMA-Shanghai, MOE-LSC, Shanghai Jiao Tong University, 200240, China}\\
{\footnotesize\it $^2$School of Mathematics and Statistics, Jiangxi Normal University, Nanchang, 330022, China} \\
{\footnotesize\it $^3$Department of Mathematics, Champlain College Saint-Lambert,
	    Saint-Lambert, Quebec, J4P 3P2, Canada} \\
{\footnotesize\it $^4$Department of Mathematics and Statistics, McGill University,
	      Montreal, Quebec, H3A 2K6, Canada  }\\
{\footnotesize\it $^5$School of Mathematical Sciences, Shanghai Jiao Tong University,
	     Shanghai, 200240, China }\\
{\tt Emails: ycli@sjtu.edu.cn; ming.mei@mcgill.ca; Rainbowsun2023@outlook.com;}
}
%\thanks{Yachun Li is supported by [National Natural Science Foundation of China under grants 12371221, 12161141004, and 11831011.] The research of M. Mei was supported by [NSERC grant RGPIN 2022-03374 and NNSFC Grant W2431005.]}

\begin{abstract}
In this paper, we establish the global $L^{p}$ mild solution of inhomogeneous incompressible Navier-Stokes equations in the torus $\mathbb{T}^{N}$ with $N<p<6$, $1\ls N\ls 3$, driven by the Wiener Process. We introduce a new iteration scheme coupled the density $\rho$ and the velocity $\u$ to linearize the system, which defines a semigroup. Notably, unlike semigroups dependent merely on $x$, the generators of this semigroup depend on both time $t$ and space $x$. After demonstrating the properties of this time- and space-dependent semigroup, we prove the local existence and uniqueness of mild solution, employing the semigroup theory and Banach's fixed point theorem. Finally, we show the global existence of mild solutions by Zorn's lemma. Moreover, for the stochastic case, we need to use the operator splitting method to do some estimates separately.
\end{abstract}

%\date{\today}

\subjclass{35A01, 35Q30, 35R60, 60H15, 35B10}% 35A01 Existence problems for PDEs: global existence, local existence, non-existence; 35Q30,  Navier-Stokes equations For fluid mechanics, 35R60, PDEs with randomness, stochastic partial differential equations;35B10 Periodic solutions to PDEs

\keywords{Mild solutions, global existence, Navier-Stokes equations, incompressible, inhomogeneous, Brownian motion}

\maketitle
%\begin{spacing}{0.94}
 {\small \tableofcontents}%%Smallize the size to make it contained in one page
 \setcounter{tocdepth}{2}

\section{Introduction}

The system of inhomogeneous incompressible Navier-Stokes equations describes the motion of the incompressible ideal fluids with the velocity $\u=(u_{1},u_{2},\cdots, u_{N})\in \R^{N}$ and the density $\rho$. However, physically, the motion of fluid may be influenced by some stochastic factors like wind, the action of the machine, industrial pollution, etc. To study the stochastic problem also helps us to discover which kind of noise might have minimal detrimental effects on the well-posedness of solutions. Sometimes, researchers specifically investigate types of noise that enhance or support the well-posedness of solutions.
While perturbed by random external force, the stochastic inhomogeneous incompressible Navier-Stokes equations (SIINS for short) is proposed as follows
\begin{equation}
\left\{\begin{array}{l}\label{sto inhomo incom NS}
\rho_{t}+\odiv \left(\rho\u\right) =0,\\
\rho \d \u+\left(\rho (\u\cdot \nabla \u)+\nabla P-\mu \triangle \u\right)\d t=\rho \Phi \d W,\\
\odiv \u=0,
\end{array}\right.
\end{equation}
 in $[0,T]\times \mathbb{T}^{N}$, where ``$\d$" is the differential notation,
 $P$ is the pressure, and the positive constant $\mu$ is the viscosity coefficient. %We also assume that density $\rho$ may change around the equilibrium of density $\bar{\rho}>0$.
  $\rho \Phi\d W$ is the external additive noise. $W$ is a cylindrical $\mathcal{F}_{t}$-adapted Wiener process in the stochastic basis $\left(\Omega,\mathcal{F},\mathbb{P}\right)$, where $\mathcal{F}=\left(\mathcal{F}_{t}\right)_{t\geqslant 0}$ is the right-continuous filtration. Let $\{e_{k}\}_{k=1}^{+\infty}$ be an orthonormal basis in an auxiliary separable Hilbert space $\mathcal{H}$, which is isometrically isomorphic to $l^{2}$, the space of square-summable sequences. $\mathcal{H}$ is independent of domain $\mathbb{T}^{N}$. $W$ is the cylindrical Wiener process in the form of
\begin{equation}
W=\sum\limits_{k=1}^{+\infty}e_{k}\beta_{k}, \quad \d W=\sum\limits_{k=1}^{+\infty}e_{k}\d \beta_{k},
\end{equation}
where $\beta_{k}$ is the standard real Brownian motion.
Let $\mathcal{V}$ be a Bochner space. In the random forcing term, $\Phi$ is a linear bounded operator from $ \mathcal{H}$ to $ \mathcal{H}$, to characterize the covariance of the noise in a specific volume,
\begin{equation}
\begin{split}
&\Phi: \mathcal{H}\ra\mathcal{V}= C\left([0, T]; L^{p}\left(\mathbb{T}^{N}\right)\right).
\end{split}
\end{equation}
 $\Phi$ acts on $e_{k}$ in $\mathcal{H}$ as
\begin{equation}
\left(\Phi(t,x),~e_{k}\right)_{\mathcal{H}} = \Phi_{k}\left(t,x\right), \quad \Phi \d W=\sum\limits_{k=1}^{+\infty} \Phi e_{k}\d \beta_{k}(t)=\sum\limits_{k=1}^{+\infty}\Phi_{k}\d \beta_{k}(t) e_{k},
\end{equation}
where $\Phi_{k}\left(t,x\right)$ is an $N$-dimensional $\mathcal{V}$-valued vector function. $\rho(t,x,\omega)$ and $\u(t,x,\omega)$ are actually stochastic processes of $t$, $x$ and $\omega$, where $\omega\in \Omega$ is the sample. We will write $\rho$ and $\u$ for short.
For system \eqref{sto inhomo incom NS}, the initial and boundary conditions are given by
\begin{align}\label{assum for boundary data}
\rho(0,x)=\rho_{0}(x),\quad \u(0,x)=\u_{0}(x), %\quad \u(t,x)\mid_{\partial \mathbb{T}^{N}}=0.
\end{align}
We assume that
\begin{align}\label{assum for initial data}
0<c_{0} \ls \rho_{0}(x)\ls C_{0}, \quad \odiv \u_{0}(x)=0, \quad \mathbb{P} ~{\rm a.s.}
\end{align}
where $c_{0}$ and $C_{0}$ are constants.

%Note that $\odiv \u=0$ is the incompressible condition when $\rho$ changes in the continuity equation. Thus, the original continuity equation is
%\begin{equation}
%\rho_{t}+ \operatorname{div}\left(\rho \u\right)=0.
%\end{equation}
%Multiplying it by $-\frac{1}{\rho^{2}}$, then the equation is transformed into
%\begin{equation}
%\frac{\bar{D}\tau}{\d t}=\tau\odiv \u,
%\end{equation}
% where $\tau=\frac{1}{\rho}$ is the specific volume and $\bar{D}=\frac{\partial }{\partial t}+ \u\cdot \nabla$ is the material derivative. Therefore, $\odiv \u=0$ represents $\frac{\overline{D}\tau}{\d t}=0$, which represents the changing rate of volume \cite{Li-Zhu2015}. Besides, we call the system homogeneous if the density is a constant.
 %%%%%
When $\Phi \d W=0$, \eqref{sto inhomo incom NS} reduces to the deterministic incompressible inhomogeneous Navier-Stokes equations (IINS).
 For $\rho$ being strictly positive, Kazhikov \cite{Kazhikhov1974Solvability} in 1974 obtained the global existence of Hopf solution to the initial and boundary value problems of the deterministic IINS, with $\nabla P\in L^{2}(U)$. Then in 1978, Ladyzhenskaya-Solonnikov \cite{Ladyzh-Solo1978} demonstrated the global existence of strong solutions $\u\in H^{1}\left(0, T; H^{2}\left(U\right)\right)$, $\nabla P \in L^{q}\left((0, T)\times U\right)$, $q>N$, $N=2,3$, $\rho\in C^{1}\left(\left(0, T\right)\times U\right)$ in a bounded domain $U$. They also proved the uniqueness of the initial and boundary value problem with large initial density data.
 These results have since been extended by various authors. J. Simon \cite{Simon1978,Simon1989,Simon1990} allowed $\rho_{0}$ to vanish, while maintaining a constant viscosity $\mu$. In 2003, Choe-Kim \cite{Choe-Kim2003} studied the global existence of strong solutions and the uniqueness with the high regularity of initial data for the initial and boundary value problem, as well as the uniqueness in \cite{Cho-Kim2004}. For other works concerning the global existence of strong solutions, we refer to \cite{Itoh-Tani1999,Germain2008,Danchin2006,Salvi2003}, among which Danchin \cite{Danchin2006} also gave the uniqueness. In 1990, Padula \cite{Padula1990OnTE} showed the existence and uniqueness of weak solutions in exterior domains with a specific form of radial $\rho_{0}$. In 1992, Fern\'andez--Cara--Guill\'en \cite{Fernandez-Cara-Guillen1992} got the existence of weak solutions in unbounded domains.

 For the IINS with degenerate viscosities, Diperna-Lions \cite{DiPerna1988-1989,DiPerna-Lions-Globalsolutions1989} proved the global existence of weak solutions to the initial and boundary value problem for the Dirichlet case, the periodic case, and the whole space case.
For the strong solutions with vacuum, the global well-posedness is uniquely solved \cite{Zhang-Zhao-Zhang2014,Zhang2015,Zhang-Shi-Cao2022}, provided that $\rho_{0}$ is smooth enough and $\u_{0}$ satisfies the compatibility condition
\begin{align}\label{compatibility condition}
-\triangle \u_0+\nabla P_0=\sqrt{\rho_0} g, \text { for some } g \in L^2(U) \text { and } P_0 \in H^1(U).
\end{align}
In 2018, based on the local existence and uniqueness obtained by Li \cite{JingLi2017JDElocal}, without the compatibility condition \eqref{compatibility condition},
 Danchin-Mucha \cite{Danchin-Mucha2018} proved the global existence and uniqueness of strong solutions in a $C^{2}$ bounded subset or a torus. Their analysis was based on an initial velocity in $H^{1}$ and assumed that the density is nonnegative and bounded. For the large time behavior, one can refer to \cite{Lyu-Shi-Zhong2018,He-Li-Lv2021,Liu2021}. For the studies in Besov space, one can further refer to \cite{Danchin2003,Abidi2007,Danchin-Mucha2009,Farwig-Qian-Zhang2020,ZhifeiZhangCPDE,Danchin-Wang2023}.
% obtained the global existence of weak solutions $(\rho, \u)$ under the boundedness of initial density in 2020, by setting $\u$ in a Besov space, and they showed that the uniqueness of the solution could be obtained by improving the regularity of initial velocity.

For the mild solutions of homogeneous incompressible Navier-Stokes equations, early in 1962, Kato-Fujita \cite{Kato-Fujita1962,Kato-Fujita1964} studied the global existence of mild solutions in 3-D and 2-D space in Hilbert space. In \cite{Kato-Fujita1962}, for the system
\begin{align}\label{Kato equation}
\frac{\d \u}{\d t} = \triangle \u + \mathcal{P} f(t)- \mathcal{P}( \u\cdot \nabla )\u ,
\end{align}
where $f(t)$ is the external force and $\mathcal{P}$ is the Leray projector, they defined the mild solution as
\begin{align}
\u(t)=e^{t\triangle }\u_{0}+\int_0^t e^{(t-s) \triangle} \mathcal{P} f(s) \d s+\int_0^t e^{(t-s) \triangle} \mathcal{P}( \u\cdot \nabla )\u(s) \d s.
\end{align}
If $\u_0 \in\left(H^1\left(\mathbb{R}^3\right)\right)^3$ and $f \in L^2\left(0, T; \left(L^2\left(\mathbb{R}^3\right)^3\right)\right)$, then there exists a $T_0 \in$ $(0, T)$ and a mild solution $\u$ of \eqref{Kato equation}, in form of
\begin{align}
\u(t)=e^{t \triangle }\u_{0}+\int_0^t e^{(t-s) \triangle} \mathcal{P} f(s) \d s+\int_0^t \triangle^{\frac{1}{4}} e^{(t-s) \triangle} \triangle^{-\frac{1}{4}}\mathcal{P}( \u\cdot \nabla )\u(s)\d s,
\end{align}
on $\left(0, T_0\right) \times \mathbb{R}^3$, such that $\u \in C\left(\left[0, T_0\right],\left(H^1(\mathbb{R}^3)\right)^3\right) \cap L^2\left(0, T_0;\left(H^2(\mathbb{R}^3)\right)^3\right)$. Then, in 1964, for $\u_{0}\in \dot{H}^{\frac{1}{2}}(U)$, they \cite{Kato-Fujita1964} proved the global existence and uniqueness of solutions in $C\left(\left(T_{1}, T_{2}\right]; \dot{H}^{\frac{1}{2}}(U)\right)$, $T_{1}>0$, with the smallness of $\|\u_{0}\|_{\dot{H}^{\frac{1}{2}}}$.
 Swann \cite{Swann1971TAMS} and Kato \cite{Kato1972JFA} individually showed that the Cauchy problem for the Navier-Stokes equation in $\mathds{R}^3$ admits a unique classical local solution $\u\in C\left([0, T];H^{3}\left(\mathds{R}^3\right)\right)$ for $\u_{0}\in H^{3}\left(\mathds{R}^3\right)$. Scholars aim at proving the uniqueness in weaker space for the deterministic INS.
In 1984, Kato \cite{Kato1984} developed the global existence and uniqueness of solutions in
$$\u \in C\left([0, +\infty]; L^{3}(\mathds{R}^{3})\right) \cap L^{5}\left((0, +\infty)\times \mathds{R}^3\right)$$
with the smallness of $\|\u_{0}\|_{L^{3}}$.
In 1992, Chemin \cite{Chemin1992} established the existence of solutions in $C([0, +\infty); H^{1/2}) \cap L^2([0, +\infty); H^{3/2}) \cap C((0, +\infty); H^s) \cap L^\infty((0, +\infty); H^{s+1})$, under the assumption that the external forces is in $C([0, +\infty); H^{1/2}) \cap L^2([0, +\infty); H^{3/2}) \cap C((0, +\infty); H^s) \cap L^\infty((0, +\infty); H^{s+1})$, where $s\geqslant 0$.
In 1994, Cannone-Meyer-Planchon \cite{Cannone1993-1994} showed that there exists a global unique solution, provided that $\|\u_0\|_{\dot{E}_{1, m}}$ is small, $\forall ~ m\geqslant 1$. The homogeneous space $\dot{E}_{q, m}$ is defined by
\begin{align}
&\dot{E}_{q, m}=\left\{f \left| \left|\partial^\alpha f(x)\right| \leq C|x|^{-q-|\alpha|}, \quad|\alpha| \leq m \right.\right\},\\
&\|f\|_{\dot{E}_{q, m}} = \sup_{|x| = 1} |D^{\alpha} f(x)|.
\end{align}
 They extended the well-posedness of solutions to that in Chemin-Lerner space $L^{\infty}\left(0, T ; \dot{B}^{0}_{3, \infty}\right)$ as well.
In 2001, Lions-Masmoudi \cite{Lions-Masmoudi2001} proved the uniqueness of solutions in the space
$$C\left(\left[0, T\right); L^N\left(\mathbb{R}^N\right)\right) \cap\left\{\u | t^{1 / 4} \u(t) \in C\left(\left[0, T\right) ; L^{2 N}\left(\mathbb{R}^N\right)\right)\right\}.$$
Koch-Tataru \cite{Koch-Tataru2001} developed the global well-posedness of solutions in $BMO^{-1}$ space
$$\sup _{x, 0<R<\sqrt{T}}\left(|B(x, R)|^{-1} \int_{B(x, R)} \int_0^{R^2}|u(t, y)|^2 \d t \d y\right)^{\frac{1}{2}}<\infty ,$$
 under the smallness of initial velocity.
After that, Lei-Lin \cite{Lei-Lin2011} gave the global solutions for the initial velocity in $\mathcal{X}^{-1}=\left\{f \in \mathcal{D}^{\prime}\left(\mathds{R}^3\right): \int_{\mathbb{R}^3}|\xi|^{-1}|\hat{f}| \d \xi<\infty\right\}$. Then in 2015, Bae \cite{Bae2015} developed an alternative proof of existence of these solutions in $\mathcal{X}^{-1}$, which allowed for the estimate of the radius of analyticity at positive times. In 2021, Ambrose-Filho-Lopes \cite{Ambrose-Filho-Lopes} proved the existence of the $\mathcal{X}^{-1}$ solution in the spatially periodic setting with an improved bound of the radius of analyticity.
Very recently, Miller \cite{Miller2021} gave the conditions on the regularity when strong solutions are close to mild solutions.

Regarding the mild solutions to deterministic IINS, Danchin-Mucha \cite{Danchin-Mucha2012} introduced the Lagrangian approach and proved the global well-posedness including the uniqueness, under the following assumptions
\begin{align}\label{assumptions in Danchin-Mucha2012}
\left\|\rho_0-1\right\|_{\mathcal{M}\left(\dot{B}_{p, 1}^{\frac{N}{p}-1}\right)}+\left\|u_0\right\|_{\dot{B}_{p, 1}^{\frac{N}{p}-1}} \leqslant \eps,\quad p\ls N,
\end{align}
for some small $\eps$. Here $\mathcal{M}\left(\dot{B}_{p, 1}^{\frac{N}{p}-1}\right)$ denotes the multiplier space of $\dot{B}_{p, 1}^{\frac{N}{p}-1}$. Paicu-Zhang-Zhang \cite{ZhifeiZhangCPDE} and Chen-Zhang-Zhao \cite{Chen-Zhang-Zhao2016} relaxed the smallness condition \eqref{assumptions in Danchin-Mucha2012}. They \cite{ZhifeiZhangCPDE,Chen-Zhang-Zhao2016} established the global unique solvability and decay estimates by the weighted energy method and dual method, under the assumption:
\begin{itemize}
  \item $N=2: 0<c \leqslant \rho_0 \leqslant C, ~\u_0 \in H^s\left(\mathbb{R}^2\right)$, $s>0$; \cite{ZhifeiZhangCPDE}
  \item $N=3: 0<c \leqslant \rho_0 \leqslant C, ~\u_0 \in H^1\left(\mathbb{R}^3\right)$ and $\left\|\u_0\right\|_{L^2}\left\|\nabla \u_0\right\|_{L^2} \leqslant \varepsilon$ with $\varepsilon$ sufficiently small. \cite{ZhifeiZhangCPDE}
  \item $N=3$: $\left(\rho_0, \u_0\right) \in L^{\infty}\left(\mathbb{R}^3\right) \times H^s\left(\mathbb{R}^3\right)$ with $s>\frac{1}{2}$ satisfies
$$
0<c_0 \leqslant \rho_0 \leq C_0<+\infty, \quad\left\|\u_0\right\|_{\dot{H}^{\frac{1}{2}}} \leqslant \varepsilon
$$
for some small $\varepsilon>0$ depending only on $c_0, C_0$. \cite{Chen-Zhang-Zhao2016}
\end{itemize}
P. Zhang \cite{Zhang2020ADVFK} and Danchin-Wang \cite{Danchin-Wang2023} obtained the global well-posedness and uniqueness in critical Besov space. Abidi-Gui-Zhang \cite{Abidi-Gui-Zhang2024} removed the assumption that the initial density is close enough to a positive constant in \cite{Danchin-Wang2023} yet with additional regularities on the initial density. 
Very recently, Hao-Shao-Wei-Zhang \cite{Hao-Shao-Wei-Zhang2024} established the global existence and uniqueness of Fujita-Kato solutions for the 3-D case with initial vacuum. Additionally, they proved the global well-posedness of solutions for the 2-D case with initial vacuum. In \cite{Hao-Shao-Wei-Zhang2024}, the weighted energy estimates and Lagrangian approach are employed. Moreover, the initial layer is considered carefully in the above deterministic works, due to the setting of the weighted solutions space.

For homogeneous incompressible Navier-Stokes equations driven by Gaussian noise, in 1995, Benssousan \cite{Bensoussan1995} derived the model of the stochastic Navier-Stokes equations from the motion of the physical particle. He also established the existence of weak solutions in an $N$-D bounded domain. For the global existence in 1-D and 2-D periodic domain, we refer to Tornatore \cite{Tornatore2000} and Yashima \cite{Yashima2001}, respectively. Du-Zhang \cite{Du-Zhangting2020} established local existence and uniqueness of the strong solution when the initial data take values in the critical Besov space, for system forced by a multiplicative white noise, as well as the global existence for small initial data. Recently, Kukavica-Xu \cite{Kukavica2024} proved the almost global existence result for small data in $L^{3}$ space, under natural smallness conditions on the noise. 

For SIINS, by Galerkin approximation and energy estimates, Yashima \cite{Yashima1992} did the earliest study of the global martingale solutions when the system is influenced by addictive Gaussian noise, with no vacuum in the initial density. In 2006, Cutland-Enright \cite{Cutland-Enright2006} gave the existence of strong solutions ($N\ls 3$)  driven by multiplicative noise in a bounded domain, with no vacuum in the initial density.
In 2010, Sango \cite{Sango2010} obtained the existence of weak solutions for multiplicative noise, where the coefficient function of $\rho$ and $\u$ is unnecessarily Lipschitz continuous. They showed that their approach works for the case that the initial density is with vacuum. In the above stochastic results, the energy method is employed instead of semigroup. In 2019, Chen-Wang-Wang \cite{Chen-Wang-Wang.2019} obtained the global martingale solutions to SIINS with L\'evy noise.

For the mild solutions of the homogeneous incompressible Navier-Stokes equations driven by fractional Brownian motion $W^{\mathscr{H}}$ with Hurst parameter $0<\mathscr{H}<1$, the works of mild solutions \cite{Tindal-Tudor-Viens,Fang-Sundar-Viens,Duncan-Maslowski2002,Duncan-Maslowski2006,Ferrario-Olivera}, are based on the semigroup theory of Laplacian operator.

We will consider the global existence and uniqueness of the mild solution for SIINS system \eqref{sto inhomo incom NS} by semigroup method, instead of the weighted energy estimate. We first give the definition of the mild solution to \eqref{sto inhomo incom NS} here.
\begin{definition}
Let $\left(\Omega,\mathcal{F},\mathbb{P}\right)$ be a fixed complete probability space with the right continuous filtration $\mathcal{F}=\left(\mathcal{F}_{s}\right)_{s\geqslant 0}$. Let $W(t)$ be the fixed cylindrical Wiener process. A pair of stochastic processes $\left(\rho,\u\right)$ is called a mild solution to system \eqref{sto inhomo incom NS}-\eqref{assum for boundary data}-\eqref{assum for initial data}, if:
\begin{enumerate}
\item $\left(\rho,\u\right)$ is adapted in $\mathcal{F}=\left(\mathcal{F}_{s}\right)_{s\geqslant 0}$;
\item  $\mathbb{P}[\{(\rho(t),\u(0))=(\rho_{0},\u_{0})\}]=1$;
\item The mass equation
\begin{align}
 \rho(t)=\rho_{0}-\int_0^t \odiv \left(\rho\u\right) \d s,
\end{align}
 holds $\mathbb{P}$ a.s., for any $t\in [0,T]$;
\item The momentum equation
\begin{align}\label{Solution in semigroup convolution}
\u(t)=&S(t)\u_{0}-\int_0^t S(t-s)\u\cdot \nabla \u(s) \d s-\int_0^t S(t-s) \nabla P(s)\d s\\
&+\int_0^t S(t-s) \Phi \d W(s),\notag
\end{align}
holds $\mathbb{P}$ a.s., for any $t\in [0,T]$, where $S(t)$ is a semigroup specified later. \eqref{Solution in semigroup convolution} is understood as the limits of an iteration scheme.
\end{enumerate}
\end{definition}
We will use the property of semigroup to get the result, in which a new iteration scheme and some technical estimates are carried out.
 Our result on the mild solution to stochastically forced systems \eqref{sto inhomo incom NS} goes as follows.
\begin{theorem}\label{main theorem}
Let $1\ls N\ls 3$, $N< p \ls 6$. We assume that \eqref{assum for initial data} holds. Given $\rho_{0}\in W^{2,p}\left(\mathbb{T}^{N}\right) \cap H^{3}\left(\mathbb{T}^{N}\right)$, and $\u_{0}\in W^{2,p}\left(\mathbb{T}^{N}\right) \cap H^{3}\left(\mathbb{T}^{N}\right)$ $\mathbb{P}$ {\rm a.s.} in the probability space $\left(\Omega, \mathcal{F}, \mathbb{P}\right) $. If, for any fixed $T>0$,
\begin{align}\label{C_Phi intro}
C_{\Phi}=\max\limits_{t\in[0,T]}\left\{\sum\limits_{k=1}^{\infty} \left\|\Phi_{k}\right\|_{\infty}^{2}, ~\sum\limits_{k=1}^{\infty} \left\|\nabla \Phi_{k}\right\|_{\infty}^{2}, ~\sum\limits_{k=1}^{\infty} \left\|\triangle \Phi_{k}\right\|_{\infty}^{2}, ~\sum\limits_{k=1}^{\infty} \left\|\nabla \triangle \Phi_{k}\right\|_{\infty}^{2}\right\}\ls C\ls \infty,
\end{align}
 then there exists a unique global mild solution to \eqref{sto inhomo incom NS}: \\
  $\rho\in C\left([0, T];  L^{p}\left(\mathbb{T}^{N}\right)\right)$, $\u\in C\left([0, T];  L^{p}\left(\mathbb{T}^{N}\right)\right)$, $\mathbb{P}$ {\rm a.s.} in  $\left(\Omega,\mathcal{F},\mathbb{P} \right) $. \\
   Besides, there exists a global energy solution to \eqref{sto inhomo incom NS}: \\
   $\rho\in C\left([0, T];  C^{1,\ell}\left(\mathbb{T}^{N}\right)\right)$, $\u\in C\left([0,T]; H^{3}\left(\mathbb{T}^{N}\right)\right)\cap L^{2}\left(0,T; H^{4}\left(\mathbb{T}^{N}\right)\right)$ $\mathbb{P}$ {\rm a.s.} in $\left(\Omega, \mathcal{F},\mathbb{P}\right) $.
\end{theorem}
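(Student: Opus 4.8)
The plan is to prove Theorem \ref{main theorem} in two stages: first establish a \emph{local} mild solution via a fixed-point argument in the Banach space $C([0,T_0]; L^p(\O))$, and then upgrade to a \emph{global} solution through a continuation argument organized by Zorn's lemma. The central structural idea, as announced in the abstract, is to linearize the coupled system by an iteration scheme. Given an approximation $(\rho^{n},\u^{n})$, I would define $\rho^{n+1}$ by solving the (now \emph{linear}) transport equation $\rho_t + \odiv(\rho \u^{n}) = 0$ with initial data $\rho_0$, and then feed this density into the momentum equation to define a time- and space-dependent generator, schematically of the form $\mathcal{L}(t)\cdot = \frac{\mu}{\rho^{n+1}}\mathcal{P}\triangle\cdot$, whose associated evolution family $S(t,s)$ replaces the constant-coefficient heat semigroup $e^{t\triangle}$ of the Kato--Fujita theory. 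The mild formulation \eqref{Solution in semigroup convolution} is then read as a fixed-point equation for $\u$, and I would first verify that the semigroup-type bounds for $S(t,s)$ (its generation, the $L^q$--$L^p$ smoothing estimates, and the gradient bounds $\|\nabla S(t,s)f\|_{L^p}\lss (t-s)^{-1/2}\|f\|_{L^p}$) stated earlier in the paper hold uniformly, since $\rho^{n+1}$ is bounded above and below by \eqref{assum for initial data} propagated along the flow.

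With the evolution family in hand, I would set up the contraction. Define the map $\Lambda$ sending $\v$ to the right-hand side of \eqref{Solution in semigroup convolution}, where the pressure $\nabla P$ is eliminated by the Leray projector $\mathcal{P}$ acting on the bilinear term and the incompressibility $\odiv\u_0 = 0$ is used to kill the boundary layer. The decisive estimate is on the bilinear convolution $B(\u,\v)(t) = \int_0^t S(t,s)\mathcal{P}(\u\cdot\nabla)\v(s)\,\d s$. Using the gradient smoothing estimate together with $N < p \ls 6$, the singularity $(t-s)^{-1/2-\frac{N}{2p}}$ (or the Sobolev-embedding variant thereof) is integrable in $s$ precisely because $p > N$ forces the exponent below $1$; this is the numerological heart of the restriction $N<p\ls 6$. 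One then gets $\|B(\u,\v)\|_{C([0,T_0];L^p)} \ls T_0^{\theta}\,\|\u\|\,\|\v\|$ for some $\theta>0$, so choosing $T_0$ small yields a strict contraction on a ball. For the stochastic convolution $\int_0^t S(t,s)\Phi\,\d W(s)$, I would use the operator-splitting device flagged in the abstract: estimate the stochastic term separately by a factorization/Burkholder--Davis--Gundy argument, where the assumption \eqref{C_Phi intro} on $\sum_k\|\Phi_k\|_\infty^2$ (and its derivatives) is exactly what makes the Hilbert--Schmidt norm of $S(t,s)\Phi$ finite and controls the $L^p$ moments of the convolution $\mathbb{P}$-a.s. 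Banach's fixed-point theorem then delivers a unique local mild solution; uniqueness extends across the whole local interval by a standard Gr\"onwall argument on the difference of two solutions.

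For the global step I would invoke Zorn's lemma on the partially ordered set of mild solutions defined on subintervals $[0,\tau)$, ordered by extension. Every chain has an upper bound (the union solution), so a maximal solution exists on some $[0,\tau^*)$; the claim is $\tau^* = T$. If $\tau^* < T$, the local existence time $T_0$ obtained above depends only on the \emph{a priori} controlled quantities---the density bounds $c_0,C_0$ (which are preserved by the transport equation, being merely transported along a divergence-free flow) and the $L^p$ norm of the velocity together with $C_\Phi$---so once I show these quantities stay bounded up to $\tau^*$, the local theorem restarts the solution past $\tau^*$, contradicting maximality. The $C^{1,\ell}$ and $H^3\cap L^2 H^4$ energy-solution regularity is then recovered by bootstrapping: the stated initial regularity $\rho_0,\u_0\in W^{2,p}\cap H^3$ is propagated by the maximal-regularity estimates for the parabolic momentum equation and the transport estimates for $\rho$.

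I expect the main obstacle to be the analysis of the time- and space-dependent evolution family $S(t,s)$ generated by $\frac{\mu}{\rho}\mathcal{P}\triangle$. Unlike the constant-coefficient heat semigroup, this operator has a non-autonomous, variable-coefficient generator, so the clean $L^q$--$L^p$ smoothing and gradient estimates are not available off the shelf; securing them uniformly in the iteration index $n$ (with constants depending only on $c_0,C_0,\mu$ and not on $\rho^{n}$ itself) is what makes the contraction and the continuation compatible. The coupling between $\rho^{n+1}$ and $\u^{n}$ also means the density and velocity iterates must be estimated \emph{together} rather than separately, and ensuring the transport-generated density stays in the admissible range $[c_0,C_0]$ and retains enough regularity to keep the generator well-behaved is the delicate point on which the entire scheme rests.
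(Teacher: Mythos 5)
Your overall architecture (iterate the transport equation to linearize the density, treat $\tfrac{\mu}{\rho}\triangle$ as a time- and space-dependent generator, contract in $C([0,T_0];L^p)$ using the smoothing exponent $(t-s)^{-1/2-\frac{N}{2p}}$ with $p>N$, then continue globally via a priori bounds and Zorn's lemma) matches the paper's. But there is one concrete step in your scheme that fails and that the paper goes out of its way to avoid: you propose to eliminate the pressure by building the Leray projector into the generator, $\mathcal{L}(t)=\tfrac{\mu}{\rho}\mathcal{P}\triangle$, and letting $\mathcal{P}$ annihilate $\nabla P$. In the inhomogeneous setting the momentum equation, after dividing by $\rho$, contains $\tfrac{\nabla P}{\rho}$, which is \emph{not} a gradient when $\rho$ is variable, so $\mathcal{P}\bigl(\tfrac{\nabla P}{\rho}\bigr)\neq 0$; moreover $\mathcal{P}$ does not commute with the variable-coefficient operator $\tfrac{1}{\rho}\triangle$, so $\tfrac{\mu}{\rho}\mathcal{P}\triangle$ does not generate the dynamics of the divergence-free part. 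The paper explicitly keeps the pressure, derives the elliptic equation $\triangle Q = -\nabla\u:\nabla\u^{T}+\tfrac{\mu\nabla\rho}{\rho^{2}}\triangle\v$ by taking the divergence of the momentum equation, and then estimates $\int_0^t S(t-s)\nabla\triangle^{-1}(\cdots)\,\d s$ term by term — including the commutator-type contribution $\tfrac{\mu\nabla\rho}{\rho^{2}}\triangle\v$, which requires the $W^{2,p}\cap C^{1,\ell}$ control of $a_{(n)}$ coming from the transport equation. Without this your contraction map is not even well defined on divergence-free fields, and the extra pressure contribution you dropped is exactly of the same size as the terms driving the contraction constant.

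A second, softer point: the paper's splitting $\u_{(n)}=\v_{(n)}+\z_{(n)}$ is not just a device for estimating the stochastic convolution separately. Its purpose is to make the equation for $\v_{(n)}$ pathwise deterministic, because the $r$-th moment of the convective term cannot be bounded uniformly on a ball; the paper then converts moment bounds on $\z_{(n)}$ into $\mathbb{P}$-a.s. bounds by taking $r\to\infty$ in Chebyshev's inequality (their Lemma on the arbitrariness of $r$). Your proposal gestures at this but would need to be made precise at exactly this point, since a naive fixed-point argument in a moment norm does not close for the bilinear term.
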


%
%\begin{remark}
%A requirement of $N\leqslant 3$ is specified in the uniform estimate of $\u$, as detailed in Section \ref{priori estimate of v}.
%\end{remark}

We begin by deforming the equations.
We denote $a=\frac{\rho-\bar{\rho}}{\bar{\rho}}$.  Due to \eqref{assum for initial data}, the transported density $\rho$ is bounded away uniformly from zero and has an upper bound. Without loss of generality, we heuristically assume that $|a|\ls \frac{1}{2}$, i.e., $\frac{1}{2} \ls 1+a \ls \frac{3}{2} < 2$. System \eqref{sto inhomo incom NS} becomes
\begin{equation}\label{new form inhomo incom NS}
\left\{\aligned
& a_{t}+\u \cdot \triangledown a =0,\\
& \left(1+a\right)\d \u+ \left(\left(1+a\right)\u\cdot\nabla \u-\frac{\mu}{\bar{\rho}}\triangle \u +\frac{\nabla P}{\bar{\rho}}\right)\d t= \Phi \d W (t),\\
& \odiv \u=0,
\endaligned
\right.
\end{equation}
subjected to the following initial and boundary value conditions
\begin{equation}\label{initial and boundary conditions}
 a\mid_{t=0}=a_{0},  \quad \u\mid_{t=0}=\u_{0}(x). %\quad \u\mid_{\partial \mathbb{T}^{N}}=0,
\end{equation}
We assume that the pressure $P$ is the function of $\rho$ and $\u$; and we denote
\begin{align}
\frac{\nabla P}{\bar{\rho}\left(1+a\right)}=\frac{\nabla P}{\rho}\triangleq \nabla Q\left(a,\u\right).
\end{align}
Then \eqref{new form inhomo incom NS} is reduced to
\begin{equation}\label{new one form inhomo incom NS}
\left\{\aligned
& a_{t}+\mathbf{u} \cdot \triangledown a =0,\\
& \d \u +\left(\u\cdot\nabla \u-\frac{\mu}{\bar{\rho}\left(1+a\right)}\triangle \u + \nabla Q\left(a,\u\right)\right)\d t = \Phi \d W (t),\\
& \odiv \u=0.
\endaligned
\right.
\end{equation}
% The idea is to divide the problem into two parts: the initial boundary problem to the evolution equation with Brownian motion,
% \begin{equation}\label{pl1}
%\left\{\aligned
%&\d \z(t)- \frac{\mu}{\bar{\rho}\left(1+a\right)}\triangle \z(t)\d t= \Phi \d W(t),\\
%& \odiv \z=0,\\
%& \z\mid_{t=0}=0, \quad a\mid_{t=0}=a_{0}, \quad\z\mid_{\partial \mathbb{T}^{N}}=0,
%\endaligned
%\right.
%\end{equation}
%and the initial boundary problem to the deterministic nonlinear evolution equation
%\begin{equation}\label{pn2}
%\left\{\aligned
%&\partial_{t}\v(t) - \frac{\mu}{\bar{\rho}\left(1+a\right)}\triangle\v(t) = -\left(\v(t)+\z(t)\right)\cdot\nabla\left(\v(t)+\z(t)\right)- \nabla Q\left(a,\u\right), \\
%& \odiv \v=0, \\
%&\v\mid_{t=0}=\u_0, \quad a\mid_{t=0}=a_{0},\quad \v\mid_{\partial \mathbb{T}^{N}}=0.
%\endaligned
%\right.
%\end{equation}
%Therefore, the solution $\u$ of \eqref{new form sto inhomo incom NS} is expressed as $\u=\v+\z$.

For the stochastic forced problem, the $r$-th moment estimate of the convective term is no longer uniformly bounded within a ball in view of Banach's fixed point theorem. This challenge is caused not only by the stochastic integral, but also by the varying density and the nonlinear term. To solve this, we avoid estimating the nonlinear term in the stochastic moment sense, i.e., we estimate the nonlinear term $\mathbb{P}$ a.s.
The idea is to divide the problem into two parts: the initial and boundary problem to an evolution equations with Wiener process,
 \begin{equation}\label{pl1}
\left\{\aligned
&\d \z(t)- \frac{\mu}{\bar{\rho}\left(1+a\right)}\triangle \z(t)\d t= \Phi \d W (t),\\
%& \odiv \z=0,\\
& \z\mid_{t=0}=0, \quad a\mid_{t=0}=a_{0}, %\quad\z\mid_{\partial \mathbb{T}^{N}}=0,
\endaligned
\right.
\end{equation}
and the initial and boundary value problem to the deterministic nonlinear evolution equations
\begin{equation}\label{pn2}
\left\{\aligned
&\v_{t} - \frac{\mu}{\bar{\rho}\left(1+a\right)}\triangle\v = -\left(\v+\z\right)\cdot\nabla\left(\v+\z\right)- \nabla Q\left(a,\v
+\z\right), \\
%& \odiv \v=0, \\
&\v\mid_{t=0}=\u_0, \quad a\mid_{t=0}=a_{0}. %\quad \v\mid_{\partial \mathbb{T}^{N}}=0.
\endaligned
\right.
\end{equation}
Therefore, the solution $\u$ of \eqref{sto inhomo incom NS} is expressed as $\u=\v+\z$. Then, with the $\mathbb{P}$ a.s. boundedness of $\|\z\|_{H^{3}}$, we obtain that the norm of $\v$ is bounded $\mathbb{P}$ a.s. Next, we utilise Banach's fixed point theorem to prove the local existence. Finay, we derive the \emph{a priori} estimates to get the global existence.

The difficulties and strategies for the well-posedness of solution to SIINS are further analysed as follows.
\begin{enumerate}
  \item {\bf A new linearizing scheme.} The usual method to deal with deterministic inhomogeneous incompressible equations is linearizing the momentum equation by substituting $\rho $ with $\bar{\rho} $ in the momentum equation, where $\bar{\rho}$ is a constant. Under this substitution, the semigroup generator $\frac{ \mu \triangle}{\bar{\rho}}$ is a Laplacian operator up to a constant. Consequently, it suffices to deal the remaining term $\left(\rho-\bar{\rho}\right) \u_{t}$.
However, this method does not work for the semigroup method while it works for energy solution. We would like to propose a new linearizing scheme to gain the mild solution of the inhomogeneous system \eqref{new one form inhomo incom NS} by the semigroup method. Since $\rho$ is not a constant, we treat the velocity as known in the following iteration of mass conservation equation:
\begin{equation}
\partial_{t} a_{(n)} + \u_{(n-1)}\cdot\nabla a_{(n)} = 0,
\end{equation}
with initial condition
\begin{equation}
a_{(n)}\mid_{t=0}=a_{0},\quad a_{(0)}=0, \quad \u_{(0)}=\u_{0}.
\end{equation}
Meanwhile, we need to linearize the momentum equation as follows
\begin{equation}\label{deterministic linearized system}
\left\{\aligned
& \partial_{t}\u_{(n)}(t) - \frac{\mu}{\bar{\rho}\left(1+a_{(n)}\right)}\triangle\u_{(n)}(t)\\
& =-\left(\u_{(n-1)}(t)\right)\cdot\nabla\left(\u_{(n-1)}(t)\right)- \nabla Q\left(a_{(n)},\u_{(n-1)}\right)+\Phi \d W (t), \\
& \odiv \u_{(n)}=0, \quad \u_{(n)}\mid_{t=0}=\u_{0}, %\quad \u_{(n)}\mid_{\partial \mathbb{T}^{N}}=0.\\
\endaligned
\right.
\end{equation}

\item {\bf More delicate estimates for the inhomogeneous case.} While employing Banach's fixed point theorem, for the new iteration scheme, we should derive the contraction of $a_{(n)}$, and $\u_{(n)}$ instead of the only contraction of $\u_{(n)}$ in the homogeneous case. Besides, higher regularity of $a$ and $\u$ are necessary in local existence. Thus, we assume the high regularity of initial data to obtain the a prior estimate of solutions. Furthermore, we do not project the pressures $\nabla P$ to be zero, which is different with the homogeneous case. Actually, to estimate $\nabla P$ suffices to the estimate the terms $$\quad -\nabla \triangle^{-1}\left( \nabla \cdot \left( \u_{(n-1)} \cdot \nabla \u_{(n-1)} \right)\d s +\frac{\mu\nabla \rho_{(n)}}{\rho_{(n)}}\triangle \v_{(n-1)}\right) ,$$
      cf. subsection \ref{Onto mapping in a ball}. %If we still apply Leray projection to the momentum equation and project the pressure to be $0$, we need deal with the extra nonlinear term
%       $$\nabla \triangle^{-1}\left( \nabla \cdot \left( \u_{(n-1)} \cdot \nabla \u_{(n-1)} \right)\right).$$

  \item {\bf Semigroup depending on time.} Compared with the usual scheme, the semigroup generator $\A_{(n)}=\frac{\mu}{\bar{\rho}\left(1+a_{(n)}\right)}\triangle$ concerns with the solution itself. So it is worth notion that $\A_{(n)}$ actually depends on time $t$, introducing a new challenge for mild solutions. This unfixed operator is proved as a generator of $C_{0}$-semigroup by checking Lumer-Phyllips's theorem. In this paper, we provide the existence of the evolving semigroup $ S_{(n)}(t)$, present its exact expression on the periodic domain, and demonstrate its time decay properties as well as its commutation with the gradient operator.

  \item {\bf Troubles aroused by Brownian motion.} As we addressed, the nonlinear term and the inhomogeneous density arise troubles for the estimates of moment. To tackle this, we deal with the stochastic term and the nonlinear term separately. Namely, we require the equation of $\v_{(n)}$ to be ``deterministic" when we split $\u_{(n)}=\v_{(n)}+\z_{(n)}$. In this approach, we estimate only the $r$-th expectation of $\z_{(n)}$. Consequently, the noise is supposed to be additive in the process of splitting. After this separation, we can establish the $\mathbb{P}$ a.s. boundedness of $\|\z_{(n)}\|_{H^{3}}$ by employing It\^o's formula and Burkholder-Davis-Gundy inequality. Then, based on the $\mathbb{P}$ a.s. boundedness of $\|\z_{(n)}\|_{H^{3}}$, we obtain the  $\mathbb{P}$ a.s. boundedness of the norm of $\v_{(n)}$ in momentum equation $\ref{deterministic linearized system})_{2}$. Furthermore, we do not use the temporal weighted method as in the deterministic case \cite{ZhifeiZhangCPDE,Zhang-Zhao-Zhang2014}, since we do not have the temporal derivative ~$\left(\u_{(n)}\right)_{t}$. The definition of ~$\left(\u_{(n)}\right)_{t}$ in the sense of weak derivatives involves the framework of Malliavin calculus, which will not be discussed in this paper.
\end{enumerate}

We describe the sketch of proof. For the iteration system \eqref{deterministic linearized system}, we prove that $\A_{(n)}$ is dissipative. By Lumer-Phyllips' theorem, it follows that $\A_{(n)}$ is the infinitesimal generator of a semigroup $S_{(n)}$.
% $S_{(n)}\Phi$ is a Hilbert-Schmidt operator,
%\begin{equation}
%\begin{split}
% S_{(n)}\Phi:[0,T]\times \mathbb{T}^{N} \ra \mathcal{V},\\
% S_{(n)}\Phi(t,x)=S(t)\Phi(t,x)\in \mathcal{H}.
%\end{split}
%\end{equation}
%where $\Phi\in L\left(\mathcal{H};\mathcal{V}\right)$, $S_{(n)}(t)\in L\left(\mathcal{V};\mathcal{V}\right)$, $S_{(n)}(t)\Phi\in L\left(\mathcal{H};\mathcal{V}\right)$, and it takes values in $\mathcal{H}$.
 Moreover, by the expression of a semigroup in a complex plane, we use Cauchy's integral formula to give a precise expression of the semigroup depending $t$ and $x$. We verify that the time decay property of $\left\|\A_{(n)}S_{(n)}\right\| \ls\frac{C}{t} \left\|S_{(n)}\right\|$ and its commutation with the gradient operator still holds. With these properties, we further use Banach's fixed point theorem to prove the local existence of a mild solution.
The map $\mathcal{L}$ is defined as an operator in the space
 \begin{align}
\mathcal{B}=\left\{\left.f\right|f\in C\left([0, T]; L^{p}\left(\mathbb{T}^{N}\right)\right)\right\},
\end{align}
where $f$ is a time-dependent function in $L^{p}(\mathbb{T}^{N})$ space.
The operator acts as
 \begin{align}
\mathcal{L}: \quad\mathcal{B}\ra \mathcal{B}, \quad\mathcal{L}f_{(n-1)}=f_{(n)},
\end{align}
essentially defining an iteration relationship between consecutive functions $f_{(n-1)}$ and $f_{(n)}$.
In this paper, we set
\begin{align}
\mathcal{L}\u_{(n-1)}=\u_{(n)}, \quad \mathcal{L}a_{(n-1)}=a_{(n)}.
\end{align}
In order to show $\mathcal{L}$ is onto, we need to show the estimate of $a_{(n)}$ and $\u_{(n)}$ uniformly in $n$, which need some good regularity of $a_{(n)}$. $a_{(n)}\in C\left([0, T];  W^{2,p}\left(\mathbb{T}^{N}\right)\cap C^{1,\ell}\left(\mathbb{T}^{N}\right)\right)$ is obtained by the hyperbolic property of mass conservation equation, provided $a_{0}\in   W^{2,p}\left(\mathbb{T}^{N}\right)\cap C^{1,\ell}\left(\mathbb{T}^{N}\right)$ and $\u_{(n-1)} \in C\left([0, T];  W^{2,p}\left(\mathbb{T}^{N}\right)\cap C^{1,\ell}\left(\mathbb{T}^{N}\right)\right)$. In the periodic domain, $W^{2,p}\left(\mathbb{T}^{N}\right)$ is compactly embedded in $L^{p}\left(\mathbb{T}^{N}\right)$. Then the contraction is operated in space $C\left([0, T]; L^{p}\left(\mathbb{T}^{N}\right)\right)$. Sequentially, we do the {\it a priori} energy estimate such that the regularity of $a_{(n)}$ and $\u_{(n)}$ meet each other's requirements mutually. Then it follows the global existence. Moreover, the incompressible condition is necessary in the zeroth-order energy estimates of $\u_{(n)}$. For the stochastic system, it is worth noting that we separately present the estimates for the onto mapping as well as the contractions of \(\z_{(n)}\) and \(\v_{(n)}\). However, the {\it a priori} estimates can be manipulated directly in terms of $\u_{(n)}$.

%We remark that the hyperbolic-parabolic structure of the system does not change because we assume that the density is far from the vacuum. By reviewing the proof, one can find that the requirement of density is just the existence of uniform positive lower bound. Moreover, this result does not need smallness of initial velocity as in \cite{ZhifeiZhangCPDE} and \cite{Chen-Zhang-Zhao2016}, but the high regularity of initial data is assumed.

The paper is organized as follows. In \S 2, we give the property of semigroup and its generators, especially for the operator depending on time $t$ and space $x$. In \S 3, we prove the local existence of mild solution $\u\in C\left([0, T];  L^{p}(\mathbb{T}^{N})\right)$, $\rho \in C\left([0, T];  L^{p}(\mathbb{T}^{N})\right)$. In \S 4, based on the energy estimate in \S 3, we show the global existence of mild solution by Zorn's lemma and the a priori estimates. \S 5 is the Appendix, in which we list some preliminaries we used in this paper.
 \smallskip
  \smallskip

\section{Some semigroup theories}

Here and hereafter, we use $\left\|\cdot\right\|$, $\left|\cdot\right|_{p}$, $\left|\cdot\right|_{k, p}$, and $\left\|\cdot\right\|_{s}$ to denote the $L^{2}$-norm, $L^{p}$-norm, $W^{k, p}$-norm  and $H^{s}$-norm respectively, ~$1<p\ls \infty$.

From Weyl's law (see \cite{strauss2007partial}), for a smooth bounded set $ U \subset \mathds{R}^{N}$, we let $\left\{\lambda_{j}\right\}_{j=1}^{\infty}$ be the eigenvalue of Laplacian operator $-\triangle$. Then, under the zero boundary condition, we have
\begin{equation}
\lim\limits_{j\ra\infty}\frac{\lambda_{j}^{\frac{N}{2}}}{j}= \frac{(2\pi)^{N}}{(\left|U\right|\alpha(N))},
\end{equation}
where $\left|U\right|$ is the volume of $U$, $\alpha(N)$ is the sphere area of the unit ball in $\mathds{R}^{N}$;
further, by denoting $\lambda_{1}= \min \left\{\left(\int_{\mathbb{T}^{N}}\left|\nabla\u\right|^{2}\d x\right)^{\frac{1}{2}}\mid \u\in H_{0}^{1}(U), ~ \left\| \u\right\|=1\right\}$, recalling $\left\| \cdot \right\|$ being the $L^{2}$-norm, we have
\begin{equation}
0<\lambda_{1}\ls\lambda_{2} \ls \lambda_{3}\ls \cdots.
\end{equation}
For torus $\mathbb{T}^{N}=[0,1]^{N}$, the eigenvectors are $\{\tilde{e}_{\mathbf{k}}\}_{|\mathbf{k}|=1}^{+\infty}=\left\{\cos2\pi \mathbf{k}\cdot \mathbf{x}, ~ \sin 2\pi \mathbf{k}\cdot \mathbf{x}\right\}$. Actually, we consider the following equation of eigenvalue
\begin{equation}
-\triangle u = \lambda u, \quad x \in \mathbb{T}^{N}.
\end{equation}
Assume that the solution is in the form of Fourier series:
\begin{equation}
u_{\mathbf{k}}(\mathbf{x}) = e^{i (\mathbf{k} \cdot \mathbf{x})}.
\end{equation}
Substituting the above form into the eigenvalue equation, we have
\begin{equation}
-\triangle u  = -\sum_{i=1}^{n} k_i^2 u.
\end{equation}
Thus, The eigenvalues are
\begin{equation}
\lambda_k = \sum_{i=1}^{n} k_i^2.
\end{equation}
Since  ~$u(x)$ satisfies the periodic condition
\begin{align}
u(x_1, \cdots, x_i, \cdots,  x_n) = u(x_1, \cdots, x_i+1, \cdots,  x_n) , \quad i=1, 2, \cdots, n, \\
\frac{\partial }{\partial x_{i}}u(x_1, \cdots, x_i, \cdots,  x_n) = \frac{\partial }{\partial x_{i}} u(x_1, \cdots, x_i+1, \cdots,  x_n), \quad i=1, 2, \cdots, n,
\end{align}
 $k_{n}$ is 
\begin{equation}
k_n = 2\pi n, \quad n \in \mathbb{Z}.
\end{equation}
In summary, The eigenvalues are
\begin{equation}
\lambda_{\mathbf{k}}=\sum_{i=1}^{n}\left(2\pi k_i\right)^2;
\end{equation}
and the eigenfunctions are 
\begin{equation}
u_{\mathbf{k}}(\mathbf{x}) = e^{i \sum\limits_{i=1}^{n} k_i x_i}.
\end{equation}

Notice that for general smooth domain, the eigenvectors are unknown.

  We list some theories on semigroup and its properties in this section.

\begin{lemma}\label{Lumer-Phyllips' theorem}
(Lumer-Phyllips' theorem) Let the linear operator $\A$ be densely defined in the Banach space $Y$, i.e., the domain of $\A$, $D(\A)$ is dense in $Y$. If both $\A$ and the conjugate operator $\A^{*}$ are dissipative, then $\A$ is the infinitesimal generator of a $C_{0}$-semigroup of contractions on $Y$.
\end{lemma}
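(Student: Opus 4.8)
The plan is to reduce the generation statement to the range (surjectivity) form of the Lumer--Phillips criterion and then to verify that form using the dissipativity of $\A^{*}$. Recall the Hille--Yosida criterion phrased through the resolvent: a densely defined operator $\A$ is the infinitesimal generator of a $C_{0}$-semigroup of contractions if and only if $\A$ is dissipative and $\operatorname{Range}(\lambda I-\A)=Y$ for some (equivalently every) $\lambda>0$. Since $\A$ is assumed dissipative, the entire burden falls on the range condition, which I would establish for the single value $\lambda=1$.

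First I would record the analytic reformulation of dissipativity: $\A$ is dissipative if and only if $\|(\lambda I-\A)x\|\gs \lambda\|x\|$ for all $x\in D(\A)$ and all $\lambda>0$. Specializing to $\lambda=1$, this shows that $I-\A$ is injective and admits a bounded inverse on its range. Combined with the closedness of $\A$ (which follows from density and dissipativity after passing to the closure if necessary, noting that the closure remains dissipative and shares the same adjoint), the estimate forces $\operatorname{Range}(I-\A)$ to be a \emph{closed} subspace of $Y$.

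Next I would show that this closed subspace is in fact dense, and this is precisely where the dissipativity of $\A^{*}$ enters. Suppose, for contradiction, that $\operatorname{Range}(I-\A)\neq Y$. By the Hahn--Banach theorem there exists a nonzero functional $y^{*}\in Y^{*}$ annihilating the range, so that $\langle(I-\A)x,\,y^{*}\rangle=0$ for every $x\in D(\A)$. This identity says exactly that $y^{*}\in D(\A^{*})$ with $\A^{*}y^{*}=y^{*}$, i.e. $(I-\A^{*})y^{*}=0$. But $\A^{*}$ is dissipative, so $\|(I-\A^{*})y^{*}\|\gs\|y^{*}\|$, which forces $y^{*}=0$, a contradiction. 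Hence $\operatorname{Range}(I-\A)$ is dense; being also closed, it equals $Y$.

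Finally, with $\operatorname{Range}(I-\A)=Y$ and $\A$ dissipative in hand, the standard continuation argument upgrades surjectivity of $\lambda I-\A$ from $\lambda=1$ to every $\lambda>0$, and the Hille--Yosida construction via the Yosida approximations $\A_{\lambda}=\lambda\A(\lambda I-\A)^{-1}$ (whose exponentials converge strongly on the dense domain) produces the contraction semigroup generated by $\A$. I expect the main obstacle to be the duality step that identifies the annihilator of the range with the fixed points of $\A^{*}$, together with the bookkeeping around the closedness/closure of $\A$ needed to make ``dense plus closed equals everything'' rigorous; the remaining pieces constitute the classical Hille--Yosida machinery, which I would cite rather than reprove.
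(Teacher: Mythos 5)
Your proof is correct, and in fact the paper offers no proof of this lemma at all: it is quoted as a known result (it is Corollary 1.4.4 in Pazy's book, which the paper cites elsewhere), so there is nothing to compare against. Your argument is the standard one: dissipativity of $\A$ gives $\|(I-\A)x\|\gs\|x\|$, hence injectivity and (for closed $\A$) a closed range; the Hahn--Banach annihilator of the range is a solution of $(I-\A^{*})y^{*}=0$, which dissipativity of $\A^{*}$ kills; and then the range form of Lumer--Phillips (or Hille--Yosida) finishes. The only point worth flagging is the closedness bookkeeping you already noticed: Pazy's statement assumes $\A$ closed, and the paper's version silently drops that hypothesis. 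A non-closed operator cannot be a generator, so strictly one can only conclude that $\bar{\A}$ generates the semigroup; your remark that a densely defined dissipative operator is closable with dissipative closure and the same adjoint is the right repair, and it is harmless for the paper's application since the operators used there are closed.
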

\begin{lemma}\label{dissipative operator is semigroup generator}
A linear operator $\A$ is dissipative if and only if
$\left\|\left(\lambda I-\A\right) g\right\|\geqslant \lambda\left\| g\right\|$, for all $ g\in D(\A)$, and $\lambda>0$.
\end{lemma}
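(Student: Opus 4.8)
The plan is to prove both implications of this standard characterization of dissipativity, working in the general Banach space $Y$ through the duality set
\[
J(g)=\left\{g^{*}\in Y^{*}:\ \langle g, g^{*}\rangle=\|g\|^{2}=\|g^{*}\|^{2}\right\},
\]
which is nonempty for every $g$ by the Hahn--Banach theorem. Here $\langle\cdot,\cdot\rangle$ denotes the pairing between $Y$ and $Y^{*}$, and recall that $\A$ dissipative means: for each $g\in D(\A)$ there is some $g^{*}\in J(g)$ with $\mathrm{Re}\,\langle \A g, g^{*}\rangle\ls 0$. In a Hilbert space $J(g)=\{g\}$ and the statement collapses to the one-line Cauchy--Schwarz computation below, but in the $L^{p}$ setting of this paper the full Banach-space argument is needed.

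For the forward direction, suppose $\A$ is dissipative and fix $g\in D(\A)$, $\lambda>0$. Choosing $g^{*}\in J(g)$ with $\mathrm{Re}\,\langle \A g, g^{*}\rangle\ls 0$, I would estimate
\[
\|\lambda g-\A g\|\,\|g\|\gs \mathrm{Re}\,\langle \lambda g-\A g, g^{*}\rangle=\lambda\|g\|^{2}-\mathrm{Re}\,\langle \A g, g^{*}\rangle\gs\lambda\|g\|^{2},
\]
and dividing by $\|g\|$ (the case $g=0$ being trivial) yields $\|(\lambda I-\A)g\|\gs\lambda\|g\|$. This direction is immediate.

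The converse is the substantive part. Assuming $\|(\lambda I-\A)g\|\gs\lambda\|g\|$ for all $g\in D(\A)$ and $\lambda>0$, fix $g\in D(\A)$ with $g\neq 0$. For each $\lambda>0$ I would pick, via Hahn--Banach, a normalized support functional $y_{\lambda}^{*}\in Y^{*}$, $\|y_{\lambda}^{*}\|=1$, with $\langle \lambda g-\A g, y_{\lambda}^{*}\rangle=\|\lambda g-\A g\|$. Splitting this real pairing gives
\[
\lambda\|g\|\ls\|\lambda g-\A g\|=\lambda\,\mathrm{Re}\,\langle g, y_{\lambda}^{*}\rangle-\mathrm{Re}\,\langle \A g, y_{\lambda}^{*}\rangle,
\]
from which two consequences follow: combining with $\mathrm{Re}\,\langle g, y_{\lambda}^{*}\rangle\ls\|g\|$ forces $\mathrm{Re}\,\langle \A g, y_{\lambda}^{*}\rangle\ls 0$, while rearranging together with $\mathrm{Re}\,\langle \A g, y_{\lambda}^{*}\rangle\gs -\|\A g\|$ gives $\mathrm{Re}\,\langle g, y_{\lambda}^{*}\rangle\gs\|g\|-\|\A g\|/\lambda$. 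The decisive step is to let $\lambda\ra\infty$: by Banach--Alaoglu the closed unit ball of $Y^{*}$ is weak-$*$ compact, so $(y_{\lambda}^{*})$ has a weak-$*$ cluster point $y^{*}$ with $\|y^{*}\|\ls 1$, and along the corresponding subnet both inequalities pass to the limit to yield $\mathrm{Re}\,\langle g, y^{*}\rangle=\|g\|$ and $\mathrm{Re}\,\langle \A g, y^{*}\rangle\ls 0$. Setting $g^{*}=\|g\|\,y^{*}$, I would then verify $g^{*}\in J(g)$ (the equality $\mathrm{Re}\,\langle g, y^{*}\rangle=\|g\|$ with $\|y^{*}\|\ls 1$ forces $\|y^{*}\|=1$ and $\langle g, y^{*}\rangle=\|g\|$) and $\mathrm{Re}\,\langle \A g, g^{*}\rangle=\|g\|\,\mathrm{Re}\,\langle \A g, y^{*}\rangle\ls 0$, establishing dissipativity.

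The main obstacle is exactly this weak-$*$ cluster-point argument in the converse: one must produce a \emph{single} limiting functional $y^{*}$ for which the duality normalization and the sign condition both survive the limit $\lambda\ra\infty$, and then renormalize to land inside $J(g)$. Every other estimate is elementary, and in the Hilbert-space reduction the whole limiting apparatus disappears.
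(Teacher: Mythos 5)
Your proof is correct. Note, however, that the paper does not prove this lemma at all: it is quoted as a known characterization (it is Theorem 1.4.2 in Pazy's book, the paper's reference \cite{Pazy}) and is then applied in Lemma \ref{dissipative generator} in the direction ``norm inequality $\Rightarrow$ dissipative'', i.e.\ precisely the converse direction that carries all the weight in your argument. What you have written is the standard textbook proof: the forward implication via a functional $g^{*}\in J(g)$ with $\mathrm{Re}\,\langle \A g,g^{*}\rangle\ls 0$ and the Cauchy--Schwarz-type bound $\|\lambda g-\A g\|\,\|g\|\gs \mathrm{Re}\,\langle\lambda g-\A g,g^{*}\rangle$, and the converse via norming functionals $y_{\lambda}^{*}$ and a weak-$*$ cluster point as $\lambda\ra\infty$, after which $g^{*}=\|g\|\,y^{*}$ lands in $J(g)$. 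All the estimates check out, including the two key consequences $\mathrm{Re}\,\langle \A g,y_{\lambda}^{*}\rangle\ls 0$ and $\mathrm{Re}\,\langle g,y_{\lambda}^{*}\rangle\gs\|g\|-\|\A g\|/\lambda$, and your use of subnets rather than subsequences makes the Banach--Alaoglu step valid in a general Banach space (in the paper's separable $L^{p}(\mathbb{T}^{N})$ setting one could even extract a weak-$*$ convergent subsequence). The only cosmetic omission is the trivial case $g=0$ in the converse, which you may want to mention for completeness.
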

We begin by verifying the dissipative property, as stated in the following lemma.
\begin{lemma} \label{dissipative generator}
Let $\A(t,x)=\mu(t,x)\triangle $, $0\ls  \underline{\mu} \ls \mu(t,x)\ls \overline{\mu}$, $\underline{\mu}$ and $\overline{\mu}$ are constants, then $\A(t,x)$ is dissipative for any fixed $t$.
\end{lemma}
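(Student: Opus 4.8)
The plan is to realize $\A(t,x)=\mu(t,x)\triangle$ as a negative self-adjoint operator with respect to a suitably weighted $L^{2}$ inner product and then to invoke Lemma \ref{dissipative operator is semigroup generator}. Fix $t$ and, assuming $\underline{\mu}>0$ (which holds in our application, where $\mu(t,x)=\frac{\mu}{\bar{\rho}(1+a)}\gs \frac{2\mu}{3\bar{\rho}}>0$), I would equip $L^{2}(\mathbb{T}^{N})$ with the weighted inner product $\langle f,g\rangle_{\mu}:=\int_{\mathbb{T}^{N}} f\bar{g}\,\mu(t,x)^{-1}\,\d x$ and its induced norm $\|\cdot\|_{\mu}$. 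Since $\underline{\mu}\ls\mu\ls\overline{\mu}$, the weight $\mu^{-1}$ is bounded above and below, so $\|\cdot\|_{\mu}$ is equivalent to the standard $L^{2}$-norm and $(L^{2}(\mathbb{T}^{N}),\langle\cdot,\cdot\rangle_{\mu})$ is a Hilbert space carrying the same topology. Hence verifying dissipativity with respect to $\|\cdot\|_{\mu}$ is exactly the hypothesis that Lemma \ref{dissipative operator is semigroup generator} requires on this Banach space, and it feeds directly into the Lumer--Phillips criterion of Lemma \ref{Lumer-Phyllips' theorem} used downstream.

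The key computation, which I would carry out first, is that the weight exactly cancels the variable coefficient. For $g\in D(\A)$, integrating by parts on the closed manifold $\mathbb{T}^{N}$ (where no boundary terms arise),
\[
\langle \A g,g\rangle_{\mu}=\int_{\mathbb{T}^{N}}\mu\,\triangle g\,\bar{g}\,\mu^{-1}\,\d x=\int_{\mathbb{T}^{N}}\triangle g\,\bar{g}\,\d x=-\int_{\mathbb{T}^{N}}|\nabla g|^{2}\,\d x\ls 0 .
\]
This identity is the heart of the argument: with the unweighted inner product, integration by parts would instead produce the extra term $\frac{1}{2}\int_{\mathbb{T}^{N}}\triangle\mu\,|g|^{2}\,\d x$ originating from $\nabla\mu$, which has no definite sign; the weight $\mu^{-1}$ removes precisely this obstruction.

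Next I would verify the resolvent inequality of Lemma \ref{dissipative operator is semigroup generator} directly. For $\lambda>0$ and $g\in D(\A)$, expanding in the weighted norm and using the displayed identity,
\[
\|(\lambda I-\A)g\|_{\mu}^{2}=\lambda^{2}\|g\|_{\mu}^{2}-2\lambda\,\mathrm{Re}\,\langle \A g,g\rangle_{\mu}+\|\A g\|_{\mu}^{2}\gs \lambda^{2}\|g\|_{\mu}^{2},
\]
since the cross term is nonnegative and $\|\A g\|_{\mu}^{2}\gs 0$. Taking square roots yields $\|(\lambda I-\A)g\|_{\mu}\gs\lambda\|g\|_{\mu}$, which is the dissipativity criterion, so $\A(t,x)$ is dissipative for the fixed $t$.

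The main obstacle — indeed the only genuinely nontrivial point — is the $x$-dependence of $\mu(t,x)$: the operator $\mu(t,x)\triangle$ is \emph{not} dissipative in the plain $L^{2}$ inner product, so a naive integration by parts fails. Identifying the correct weighted measure $\mu^{-1}\,\d x$ (equivalently, recognizing that $\A(t,x)$ is negative self-adjoint with respect to this measure) is the crux; once it is in place the estimate is immediate. A secondary point worth flagging is the degenerate case $\underline{\mu}=0$ that the hypothesis formally allows, where $\mu^{-1}$ need not be bounded and $\|\cdot\|_{\mu}$ may fail to be equivalent to $\|\cdot\|$; this case does not occur in the application, since $1+a$ is bounded strictly away from $0$, and may therefore be excluded or treated by a limiting argument.
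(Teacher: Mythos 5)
Your proof is correct in its core computation but takes a genuinely different route from the paper. The paper works directly with the unweighted $L^{2}$ inner product: it expands $g=\sum_{\mathbf{i}}k_{\mathbf{i}}\tilde{e}_{\mathbf{i}}$ in the Laplacian eigenbasis, writes $\left(\lambda g,-\A g\right)=\left(\mu(t,x)\lambda\sum_{\mathbf{i}}\lambda_{\mathbf{i}}k_{\mathbf{i}}\tilde{e}_{\mathbf{i}},\sum_{\mathbf{i}}k_{\mathbf{i}}\tilde{e}_{\mathbf{i}}\right)$, and then bounds this below by $\underline{\mu}\lambda\sum_{\mathbf{i}}\lambda_{\mathbf{i}}|k_{\mathbf{i}}|^{2}\gs 0$ by pulling the variable coefficient out through its lower bound; it then concludes $\pl(\lambda I-\A)g\pl^{2}\gs\lambda^{2}\pl g\pl^{2}$ exactly as you do. Your weighted-inner-product argument replaces that step with the exact identity $\langle\A g,g\rangle_{\mu}=-\int_{\O}|\nabla g|^{2}\,\d x\ls 0$, which is cleaner: as you correctly observe, in the plain $L^{2}$ pairing integration by parts produces the indefinite term $\tfrac12\int\triangle\mu\,|g|^{2}\,\d x$, so the sign of the cross term is not pointwise obvious, whereas the weight $\mu^{-1}$ makes $\A$ symmetric and negative semidefinite outright. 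What the paper's route buys is that the conclusion is stated in the original $L^{2}$ norm, which is the norm in which Lemma \ref{dissipative operator is semigroup generator} and Lemma \ref{Lumer-Phyllips' theorem} are subsequently applied. Your route buys an unconditional, sign-definite identity, but at the cost of a renorming: dissipativity is a norm-dependent property, so what you actually prove is $\|(\lambda I-\A)g\|_{\mu}\gs\lambda\|g\|_{\mu}$, i.e.\ that $\A$ generates a contraction semigroup on $\left(L^{2},\|\cdot\|_{\mu}\right)$ and hence only a bounded semigroup, $\|S(t)\|\ls\left(\overline{\mu}/\underline{\mu}\right)^{1/2}$, in the original norm. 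You should state this explicitly; it is harmless for everything downstream (the paper only ever uses $\|S(t)\|\ls M e^{-\delta t}$ with $M\gs 1$), but it means your lemma is not literally the same statement as the paper's. Your remarks on the degenerate case $\underline{\mu}=0$ and on why it is excluded in the application are accurate and worth keeping.
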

\begin{proof}
 For any $ g \in Y $, where $\{\tilde{e}_{\mathbf{k}}\}_{|\mathbf{k}|=1}^{+\infty}$ is dense in $Y$, it holds that
 \begin{align}\label{lambda I -A}
& \pl\left(\lambda I-\A\left(t,x\right) \right) g \pl^{2}\notag \\
=& \left( \left(\lambda I-\A\left(t,x\right) \right) g,~\left(\lambda I-\A\left(t,x\right) \right) g \right) \\
=&\left( \lambda^{2}  g,~ g\right)+2\left( \lambda  g ,~-\A\left(t,x\right)  g\right) + \left( \A\left(t,x\right) g,~\A\left(t,x\right)  g\right).\notag
\end{align}
For $ g =\sum_{|\mathbf{i}|=1}^{+\infty}k_{\mathbf{i}}\tilde{e}_{\mathbf{i}}$,  $\sum_{|\mathbf{i}|=1}^{+\infty}|k_{\mathbf{i}}|^{2} $ is bounded, where $k_{\mathbf{i}}$ is merely dependent of $t$. There holds
\begin{align}
&\left( \lambda  g  ,~ -\A\left(t,x\right) g \right)  \notag\\
=&\left( -\mu(t,x) \lambda \triangle \left(\sum_{|\mathbf{i}|=1}^{+\infty}k_{\mathbf{i}}\tilde{e}_{\mathbf{i}}\right),~ \sum_{i=1}^{+\infty} k_{\mathbf{i}}\tilde{e}_{\mathbf{i}} \right)  \notag\\
=&\left( -\mu(t,x)\lambda \sum_{|\mathbf{i}|=1}^{+\infty} (-\lambda_{\mathbf{i}}k_{\mathbf{i}}\tilde{e}_{\mathbf{i}}),~ \sum_{i=1}^{+\infty} k_{\mathbf{i}}\tilde{e}_{\mathbf{i}}\right) \notag\\
=& \left( \mu(t,x) \lambda \sum_{|\mathbf{i}|=1}^{+\infty} (\lambda_{\mathbf{i}}k_{\mathbf{i}}\tilde{e}_{\mathbf{i}}),~ \sum_{i=1}^{+\infty} k_{\mathbf{i}}\tilde{e}_{\mathbf{i}}\right)\\
\geqslant & \underline{\mu}\lambda \left( \sum_{|\mathbf{i}|=1}^{+\infty} (\lambda_{\mathbf{i}}k_{\mathbf{i}}\tilde{e}_{\mathbf{i}}),~ \sum_{|\mathbf{i}|=1}^{+\infty} k_{\mathbf{i}}\tilde{e}_{\mathbf{i}}\right) \notag\\
\geqslant & \underline{\mu}\lambda  \sum_{|\mathbf{i}|=1}^{+\infty}\lambda_{\mathbf{i}} |k_{\mathbf{i}}|^{2} \geqslant \underline{\mu}\lambda \lambda_{1} \sum_{|\mathbf{i}|=1}^{+\infty} |k_{\mathbf{i}}|^{2} \geqslant  0, \notag
%=& \overline{\mu} \int_{\mathbb{T}^{N}} \left|\nabla u_{\mathbf{i}}\right|^{2}  I_{\{\triangle u_{\mathbf{i}} u_{\mathbf{i}}\geqslant 0\} }\d x +  \underline{\mu} \int_{\mathbb{T}^{N}} \left|\nabla u_{\mathbf{i}}\right|^{2} I_{\{\triangle u_{\mathbf{i}} u_{\mathbf{i}} \ls 0 \}}\d x \notag\\
\end{align}
For the last term in \eqref{lambda I -A}, by the definition, we have
\begin{align}
\left( \A\left(t,x\right)  g ,~ \A\left(t,x\right)  g \right) \geqslant 0.
\end{align}
Therefore, we have
\begin{align}
\pl\left(\lambda I-\A\left(t,x\right)\right)g \pl^{2} \geqslant  \left( \lambda^{2} g, g \right)=\lambda^{2} \pl g \pl^{2}.
\end{align}\hfill $\square$
\end{proof}
 %\begin{equation}\label{linearized system}
%\left\{\aligned
%& \partial_{t} a_{(n)}+\u_{(n-1)}\cdot\nabla a_{(n)}= 0, \\
%& \mathbf{P}_{q}\left(\d \z_{(n)}(t)- \frac{\mu\triangle}{\left(1+a_{(n)}\right)}\z_{(n)}(t)\d t\right)=\mathbf{P}_{q}\Phi \d W (t),\\
%&\mathbf{P}_{q} \left(\partial_{t}\v_{(n)}(t) - \frac{ \mu \triangle}{\left(1+a_{(n)}\right)}\v_{(n)}(t)\right)=-\mathbf{P}_{q}(\u_{(n-1)}(t))\cdot\nabla(\u_{(n-1)}(t)), \\
%& a_{(n)}\mid_{t=0}=a_{0},\quad a_{(0)}=a_{0},\\
%& \odiv \v_{(n)}=0, \quad \v_{(n)}\mid_{t=0}=\u_{0}, \quad\v_{(0)}=\u_{0},\quad \v_{(n)}\mid_{\partial U}=0,\\
%& \odiv \z_{(n)}=0, \quad \z_{(n)}\mid_{t=0}=0, \quad \z_{(0)}=0,  \quad \z_{(n)}\mid_{\partial U}=0.
%\endaligned
%\right.
%\end{equation}
%Let $\lambda_{j}$ be the eigenvalue of Laplacian operator with periodic condition, and we let $\left\{\tilde{e}_{j}\right\}_{|j|=1}^{\infty}$ be the family of trigonometric eigenfunctions.
We consider the conjugate operator $\A^{*}=\mu(t,x)\triangle$, i.e., ~$\left(\A f, ~g\right)=\left( f, ~\A^{*} g\right)$, for ~$ f,~ g  \in D(\A)$. 
Since ~$f=\sum_{|\mathbf{i}|=1}^{+\infty}l_{\mathbf{i}}e_{\textbf{i}}$, ~$g =\sum_{|\mathbf{i}|=1}^{+\infty}k_{\mathbf{i}}e_{\textbf{i}}$, we have
\begin{align}
\left( \A\left(t,x\right)  f,~g  \right)
=&\left(\mu\left(t,x\right)\triangle \sum_{|\mathbf{i}|=1}^{+\infty}l_{\mathbf{i}}e_{\textbf{i}},~\sum_{|\mathbf{i}|=1}^{+\infty}k_{\mathbf{i}}e_{\textbf{i}}  \right)
=\left(\mu\left(t,x\right) \sum_{|\mathbf{i}|=1}^{+\infty}\lambda_{i}l_{\mathbf{i}}e_{\textbf{i}},~\sum_{|\mathbf{i}|=1}^{+\infty}k_{\mathbf{i}}e_{\textbf{i}} \right)\\
=&\left( \sum_{|\mathbf{i}|=1}^{+\infty}k_{\mathbf{i}}l_{\mathbf{i}}e_{\textbf{i}}, ~ \mu\left(t,x\right)\sum_{|\mathbf{i}|=1}^{+\infty}\lambda_{i} k_{\mathbf{i}}e_{\textbf{i}} \right)=\left( f, ~\A^{*}(t,x) g\right),\notag
\end{align}
Thus, ~$\A^{*}(t,x)=\mu(t,x)\triangle $, is dissipative as well.

 We will prove the following lemma, which gives the explicit expression for semigroup.
\begin{lemma}\label{lemma expression of semigroup}
Let the $C_{0}-$semigroup $T(t,x)$ be generated by $\A(t,x)$, satisfying $\left\|T(t,x)\right\|\ls M e^{\omega t}$, and $\omega\ls 0$. Then, for eigenfunctions $\tilde{e}_{\mathbf{j}}\in D\left(\A\right)$, there holds
\begin{align}\label{expression of semigroup}
T(t)\tilde{e}_{\mathbf{j}}= \frac{1}{2\pi i}\int_{l} e^{\lambda t} \left(\lambda I-\A\right)^{-1}\tilde{e}_{\mathbf{j}}\d \lambda = e^{-\lambda_{\mathbf{j}}\mu(t,x) t} \tilde{e}_{\mathbf{j}},
\end{align}
where $l$ is a line from $\jmath-i\infty$ to $\jmath+i\infty$, $\jmath> 0$, for any fixed $t$.
\end{lemma}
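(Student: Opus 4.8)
The plan is to establish the two asserted equalities separately: the first is the Bromwich (inverse-Laplace) representation of a $C_{0}$-semigroup in terms of its resolvent, valid for any element of the generator's domain, and the second is an explicit residue computation that exploits the fact that $\tilde{e}_{\mathbf{j}}$ is an eigenfunction of $\triangle$. First I would invoke the standard inversion theory: since $\A(t,x)$ generates a $C_{0}$-semigroup $T(t,x)$ with $\left\|T(t,x)\right\|\ls M e^{\omega t}$ and $\omega\ls 0$, the resolvent $\left(\lambda I-\A\right)^{-1}=\int_{0}^{\infty}e^{-\lambda s}T(s)\,\d s$ exists for $\operatorname{Re}\lambda>\omega$ and is the Laplace transform of the semigroup. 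The classical inversion formula (Hille--Yosida/Pazy theory) then recovers $T(t)$ from its resolvent along any vertical line $l$ with abscissa $\jmath>\omega$, in particular $\jmath>0$. Because $\tilde{e}_{\mathbf{j}}$ is a smooth trigonometric eigenfunction it lies in $D(\A^{k})$ for every $k$, so the Bromwich integral converges (as a principal value $\lim_{R\to\infty}\int_{\jmath-iR}^{\jmath+iR}$) and the formula applies to it directly, giving the first equality in \eqref{expression of semigroup}.

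Next I would compute the scalar integral. Using $-\triangle\tilde{e}_{\mathbf{j}}=\lambda_{\mathbf{j}}\tilde{e}_{\mathbf{j}}$ and freezing the coefficient $\mu(t,x)$ (treating the action pointwise in $x$, so that $\mu(t,x)$ is a fixed positive constant), one has $\A\tilde{e}_{\mathbf{j}}=-\mu(t,x)\lambda_{\mathbf{j}}\tilde{e}_{\mathbf{j}}$, hence
\begin{equation}
\left(\lambda I-\A\right)^{-1}\tilde{e}_{\mathbf{j}}=\frac{1}{\lambda+\mu(t,x)\lambda_{\mathbf{j}}}\,\tilde{e}_{\mathbf{j}}.
\end{equation}
Substituting this into the Bromwich integral and pulling out the scalar reduces the claim to evaluating $\frac{1}{2\pi i}\int_{l}\frac{e^{\lambda t}}{\lambda+\mu(t,x)\lambda_{\mathbf{j}}}\,\d\lambda$. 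The integrand has a single simple pole at $\lambda_{0}=-\mu(t,x)\lambda_{\mathbf{j}}$, which satisfies $\lambda_{0}<0<\jmath$ and so lies strictly to the left of $l$. For $t>0$ I would close the contour by a large semicircle in the left half-plane; Jordan's lemma kills the arc (the factor $e^{\lambda t}$ decays there), so by the residue theorem the integral equals $\operatorname{Res}_{\lambda=\lambda_{0}}\frac{e^{\lambda t}}{\lambda-\lambda_{0}}=e^{\lambda_{0}t}=e^{-\mu(t,x)\lambda_{\mathbf{j}}t}$. This yields $T(t)\tilde{e}_{\mathbf{j}}=e^{-\lambda_{\mathbf{j}}\mu(t,x)t}\tilde{e}_{\mathbf{j}}$; the value at $t=0$ is then the identity, consistent with the strong continuity of the semigroup.

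The hard part will be the frozen-coefficient step, which is where the genuine subtlety of a space-dependent generator hides. Since $\mu$ depends on $x$, the function $\tilde{e}_{\mathbf{j}}$ is \emph{not} a true eigenvector of $\A(t,x)$: the product $\mu(t,x)\tilde{e}_{\mathbf{j}}(x)$ is generally not proportional to $\tilde{e}_{\mathbf{j}}(x)$, so the resolvent identity above must be read in the frozen sense, with $(t,x)$ fixed so that $\mu(t,x)$ is a constant. The substantive issue is to justify that this pointwise/frozen representation is consistent with the evolution semigroup actually generated by $\A(t,x)$, rather than a mere formal manipulation; I would make this precise using the frozen-coefficient construction underlying the rest of the paper, and I expect the bookkeeping there—not the contour integral—to be the main obstacle. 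Once that interpretation is fixed, the convergence of the Bromwich integral and the residue evaluation are entirely routine.
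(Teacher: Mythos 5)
Your proposal follows essentially the same route as the paper's proof: the Bromwich inversion formula from Pazy, the resolvent computation $\left(\lambda I-\A\right)^{-1}\tilde{e}_{\mathbf{j}}=\left(\lambda+\mu(t,x)\lambda_{\mathbf{j}}\right)^{-1}\tilde{e}_{\mathbf{j}}$, and closing the contour in the left half-plane with Jordan's lemma and Cauchy's integral formula. The frozen-coefficient subtlety you flag is genuine, but the paper's own proof performs exactly the same formal manipulation (substituting $\Lambda=\lambda/\mu(t,x)$ and treating $\mu(t,x)$ as a scalar throughout), so your plan is, if anything, more explicit about that point than the source.
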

\begin{proof}
By Corollary 1.7.5 in \cite{Pazy}, it holds that
\begin{equation}
T(t)f=\frac{1}{2\pi i}\int_{\jmath-i\infty}^{\jmath+i\infty} e^{\lambda t} \left(\lambda I-\A\right)^{-1}f\d \lambda.
\end{equation}
Let $\jmath> \max \{0,\omega\}=0$. For $0<\jmath\ls \mu(t,x) \lambda_{\mathbf{j}}$, i.e., $0<\frac{\jmath}{\mu(t,x)} \ls \lambda_{\mathbf{j}}$, denoting $\Lambda=\frac{\lambda}{\mu(t,x)}$, we have
\begin{align}
T(t)\tilde{e}_{\mathbf{j}} = & \frac{1}{2\pi i}\int_{\jmath-i\infty}^{\jmath+i\infty} e^{\lambda t} \left(\lambda I-\A\right)^{-1}\tilde{e}_{\mathbf{j}}\d \lambda \notag\\
=& \frac{1}{2\pi i}\int_{\jmath-i\infty}^{\jmath+i\infty} e^{\lambda t} \left(\lambda I-\mu(t,x) \triangle\right)^{-1}\tilde{e}_{\mathbf{j}} \d \lambda \\
=&  \frac{1}{\mu(t,x)} \frac{1}{2\pi i} \int_{\jmath-i\infty}^{\jmath+i\infty} e^{\lambda t} \left(\frac{\lambda }{\mu(t,x)} I- \triangle\right)^{-1}\tilde{e}_{\mathbf{j}} \d \lambda \notag \\
=& \frac{1}{2\pi i}\int_{\frac{\jmath}{\mu(t,x)}-i\infty}^{\frac{\jmath}{\mu(t,x)}+i\infty} e^{\Lambda\mu(t,x) t} \left(\Lambda I- \triangle\right)^{-1}\tilde{e}_{\mathbf{j}} \d \Lambda \notag.
\end{align}
We calculate $\left(\Lambda I- \triangle\right)^{-1}\tilde{e}_{\mathbf{j}}$ as follows: \\
since
\begin{align}
\left(\Lambda I-\triangle \right)\tilde{e}_{\mathbf{j}} = \left(\Lambda+\lambda_{\mathbf{j}} \right)\tilde{e}_{\mathbf{j}},\\
\tilde{e}_{\mathbf{j}} = \frac{1}{\left(\Lambda+\lambda_{\mathbf{j}} \right)}\left(\Lambda I-\triangle \right)\tilde{e}_{\mathbf{j}},
\end{align}
 we have
\begin{align}
\left(\Lambda I-\triangle \right)^{-1}\tilde{e}_{\mathbf{j}} = \left(\Lambda I-\triangle \right)^{-1} \frac{1}{\left(\Lambda+\lambda_{\mathbf{j}} \right)}\left(\Lambda I-\triangle \right)\tilde{e}_{\mathbf{j}}= \frac{1}{\left(\Lambda+\lambda_{\mathbf{j}} \right)}\tilde{e}_{\mathbf{j}}.
\end{align}
Thus, it holds that
\begin{align}
T(t)\tilde{e}_{\mathbf{j}}=\frac{1}{2\pi i}\int_{\frac{\jmath}{\mu(t,x)}-i\infty}^{\frac{\jmath}{\mu(t,x)}+i\infty} e^{\Lambda\mu(t,x) t} \frac{1}{\left(\Lambda+\lambda_{\mathbf{j}} \right)}\tilde{e}_{\mathbf{j}} \d \Lambda.
\end{align}
We denote the line as $l:$ extending from $\jmath-i\infty$ to $\jmath+i\infty$, and the half circle as $\mathscr{C}: R e^{i\theta}$, where $\theta$ ranges from $\frac{\pi}{2}$ to $ \frac{3\pi}{2}$. Then, we estimate
\begin{align}
& \left|\frac{1}{2\pi i}\int_{\mathscr{C}} e^{\Lambda\mu(t,x)t} \left(\Lambda I- \triangle\right)^{-1}\tilde{e}_{\mathbf{j}} \d \Lambda \right|\notag \\
\ls & \frac{1}{2\pi}\left|\int_{\frac{\pi}{2}}^{\frac{3\pi}{2}} e^{Re^{i\theta}\mu(t,x) t}  \frac{1}{\left(R e^{i\theta}+\lambda_{\mathbf{j}} \right)}\tilde{e}_{\mathbf{j}} \d \theta \right|\notag \\
= & \frac{1}{2\pi}\left|\int_{\frac{\pi}{2}}^{\frac{3\pi}{2}} e^{R\left(\cos \theta+i \sin \theta\right)\mu(t,x)t}  \frac{1}{\left(R e^{i\theta}+\lambda_{\mathbf{j}} \right)}\tilde{e}_{\mathbf{j}} \d \theta \right|\notag \\
= & \frac{1}{2\pi}\int_{\frac{\pi}{2}}^{\frac{3\pi}{2}} e^{R\cos \theta\mu(t,x) t} \left| \frac{1}{\left(R e^{i\theta}+\lambda_{\mathbf{j}} \right)}\tilde{e}_{\mathbf{j}} \right| \d \theta \notag \\
\ls & \frac{1}{2\pi}\int_{\frac{\pi}{2}}^{\frac{3\pi}{2}}  \left| \frac{1}{\left(R e^{i\theta}+\lambda_{\mathbf{j}} \right)}\tilde{e}_{\mathbf{j}} \right| \d \theta \notag \\
\ra &0, \text{ as }R \ra \infty, \notag
\end{align}
due to $R\cos\theta\mu(t,x)t\ls 0$. One can also get the above formula from Jordan's lemma. By Cauchy's integral formula, we have
\begin{align}
T(t)\tilde{e}_{\mathbf{j}}= \frac{1}{2\pi i}\int_{l+\mathscr{C}} e^{\lambda t} \left(\lambda I-\A\right)^{-1}\tilde{e}_{\mathbf{j}}\d \lambda = e^{-\lambda_{\mathbf{j}}\mu(t,x) t} \tilde{e}_{\mathbf{j}}.
\end{align}\hfill $\square$
\end{proof}
The following lemma shows the important property of the resolvent $\left(\lambda I-\A\right)^{-1}$. This property is the precondition of the existence of semigroup for the temporal evolving systems.
\begin{lemma}\label{Theorem 1.5.2 in Pizy}(Theorem 1.5.2 in \cite{Pazy}) The linear operator $\A$ is the infinitesimal generator of a $C_0-$semigroup $T(t)$, satisfying $\|T(t)\| \ls  M $, with $M$ being a constant $M \geqslant 1$, if and only if
\begin{itemize}
  \item[1.] $\A$ is closed and $D(\A)$ is dense in $Y$;
  \item[2.] The resolvent set $\Theta(\A)$ of $\A$ contains $\mathbb{R}^{+}$. Moreover, the resolvent of $\A(t)$, $R(\lambda: \A)=\left(\lambda I-\A\right)^{-1}$,  is a bounded linear operator for $\lambda\in \Theta(\A)$, satisfying
\begin{align}
\left\|R(\lambda: \A)^n\right\| \ls M / \lambda^n \quad \text { for } \quad \lambda>0, \quad n=1,2, \ldots.
\end{align}
\end{itemize}
\end{lemma}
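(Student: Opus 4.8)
The statement is the Hille--Yosida characterization of uniformly bounded $C_{0}$-semigroups (the case $\|T(t)\|\ls M$ rather than $M=1$), so the plan is to reproduce the classical Laplace-transform/Yosida-approximation argument. For the necessity direction, assuming $\A$ generates $T(t)$ with $\|T(t)\|\ls M$, I would first recover density of $D(\A)$ from the fact that the averages $\frac{1}{h}\int_{0}^{h}T(s)x\,\d s$ lie in $D(\A)$ and converge to $x$ as $h\ra 0$, and closedness from the standard limit argument on the difference quotients $\frac{1}{t}\big(T(t)x-x\big)$. Then, for $\lambda>0$, I would realize the resolvent as the Laplace transform $R(\lambda:\A)x=\int_{0}^{\infty}e^{-\lambda t}T(t)x\,\d t$, checking $(\lambda I-\A)R(\lambda:\A)=I=R(\lambda:\A)(\lambda I-\A)$. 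Differentiating $n-1$ times in $\lambda$ under the integral yields $R(\lambda:\A)^{n}x=\frac{1}{(n-1)!}\int_{0}^{\infty}t^{n-1}e^{-\lambda t}T(t)x\,\d t$, whence $\|R(\lambda:\A)^{n}\|\ls \frac{M}{(n-1)!}\int_{0}^{\infty}t^{n-1}e^{-\lambda t}\,\d t=M/\lambda^{n}$, which is exactly condition~2.

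For the sufficiency direction --- the harder half --- I would reconstruct $T(t)$ from the resolvent via the Yosida approximation. Setting $\A_{\lambda}=\lambda\A R(\lambda:\A)=\lambda^{2}R(\lambda:\A)-\lambda I$, each $\A_{\lambda}$ is a bounded operator, so $T_{\lambda}(t)=e^{t\A_{\lambda}}$ is a well-defined uniformly continuous semigroup. The first key step is to show $\lambda R(\lambda:\A)x\ra x$ as $\lambda\ra\infty$ --- first for $x\in D(\A)$, using $\lambda R(\lambda:\A)x-x=R(\lambda:\A)\A x$ together with the bound $\|R(\lambda:\A)\|\ls M/\lambda$, then extending to all of $Y$ by density --- which in turn gives $\A_{\lambda}x\ra\A x$ for $x\in D(\A)$.

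The crux of the whole argument is the uniform bound $\|T_{\lambda}(t)\|\ls M$, and this is precisely where the full power-$n$ estimate in condition~2 is indispensable. Expanding
\[
T_{\lambda}(t)=e^{-\lambda t}\,e^{t\lambda^{2}R(\lambda:\A)}=e^{-\lambda t}\sum_{n=0}^{\infty}\frac{(t\lambda^{2})^{n}}{n!}R(\lambda:\A)^{n},
\]
and inserting $\|R(\lambda:\A)^{n}\|\ls M/\lambda^{n}$ gives $\|T_{\lambda}(t)\|\ls M e^{-\lambda t}\sum_{n}\frac{(t\lambda)^{n}}{n!}=M$; note that for $M=1$ the single bound at $n=1$ would suffice, but for $M>1$ one genuinely needs every power. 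With this uniform bound, the final step is to prove $\{T_{\lambda}(t)x\}$ is Cauchy as $\lambda\ra\infty$: for $x\in D(\A)$ I would use the identity $T_{\lambda}(t)x-T_{\mu}(t)x=\int_{0}^{t}\frac{\d}{\d s}\big(T_{\mu}(t-s)T_{\lambda}(s)x\big)\,\d s$ and commute the (mutually commuting) bounded operators to obtain $\|T_{\lambda}(t)x-T_{\mu}(t)x\|\ls M^{2}t\,\|\A_{\lambda}x-\A_{\mu}x\|\ra 0$, then extend to all $x\in Y$ by density using the uniform bound. Defining $T(t)x=\lim_{\lambda\ra\infty}T_{\lambda}(t)x$, the semigroup law, strong continuity, the bound $\|T(t)\|\ls M$, and the identification of its generator with $\A$ all follow by passing to the limit in the corresponding relations for $T_{\lambda}$. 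The main obstacle is this sufficiency construction, and everything hinges on the uniform-in-$\lambda$ bound $\|T_{\lambda}(t)\|\ls M$, which is the sole reason the resolvent estimate is demanded for all powers $n$ rather than merely $n=1$.
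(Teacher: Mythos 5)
The paper does not prove this lemma at all: it is imported verbatim as Theorem 1.5.2 of \cite{Pazy}, so there is no in-paper argument to compare against. Your proposal correctly supplies the classical proof of this characterization (the bounded-semigroup case of Hille--Yosida, i.e.\ Feller--Miyadera--Phillips with $\omega=0$). The necessity half --- density via the averages $\frac{1}{h}\int_0^h T(s)x\,\d s$, closedness, the Laplace-transform representation of the resolvent, and the differentiated formula $R(\lambda:\A)^n x=\frac{1}{(n-1)!}\int_0^\infty t^{n-1}e^{-\lambda t}T(t)x\,\d t$ --- is exactly standard. For sufficiency, it is worth noting that your route differs slightly from Pazy's own: Pazy first renorms the space by $|x|=\sup_{\mu>0}\sup_{n\geqslant 0}\|\mu^n R(\mu:\A)^n x\|$, which is equivalent to $\|\cdot\|$ with constants $1$ and $M$ and in which $\A$ becomes the generator of a contraction semigroup, and then invokes the $M=1$ case; you instead bound the Yosida approximants directly by expanding $e^{t\A_\lambda}=e^{-\lambda t}\sum_n\frac{(t\lambda^2)^n}{n!}R(\lambda:\A)^n$ and inserting the power-$n$ estimate to get $\|T_\lambda(t)\|\ls M$ uniformly. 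Both are valid; your version avoids the renorming at the cost of carrying $M^2$ through the Cauchy estimate $\|T_\lambda(t)x-T_\mu(t)x\|\ls M^2t\|\A_\lambda x-\A_\mu x\|$, and your remark that the full family of power bounds is genuinely needed when $M>1$ is exactly the right point. The only step stated too tersely is the final identification of the generator: after constructing $T(t)$ one shows its generator $B$ extends $\A$, and then uses that $\lambda I-\A$ is surjective while $\lambda I-B$ is injective for $\lambda>0$ to conclude $D(B)=D(\A)$; this should be spelled out, but it is routine.
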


From the textbook \cite{Pazy}, it is clear that the existence and the uniqueness of semigroup $U(t,s)$ are subjected to the following assumptions of generators:
\begin{itemize}
  \item[$\left(P_1\right)$] the domain $D(\A(t))$ of $\A(t)$, $0 \ls  t \ls  T$ is dense in $Y$ and independent of $t$;
  \item[$\left(P_2\right)$] for $t \in[0, T]$, the resolvent of $\A(t)$, $R(\lambda: \A(t))=\left(\lambda I-\A(t)\right)^{-1}$, exists for all $\lambda$ with $\operatorname{Re} \lambda < 0$ and there is a constant $M$ such that
$$
\|R(\lambda: \A(t))\| \ls  \frac{M}{|\lambda|} \quad \text { for } \quad \operatorname{Re} \lambda < 0,\quad t \in[0, T];
$$
  \item[$\left(P_3\right)$] there exist constants $L$ and $0<\alpha \ls  1$ such that
$$
\left\|(\A(t)-\A(s)) \A(\tau)^{-1}\right\| \ls  L|t-s|^\alpha \quad \text { for } \quad s, ~t, ~\tau \in[0, T];
$$
\end{itemize}
With $\left(P_1\right)-\left(P_3\right)$ above, one has the existence of a specific evolving system. We list the theorem 5.6.1 and differentiability in \cite{Pazy} here.
\begin{lemma}\label{Semigroup property}\cite{Pazy}
Under the assumptions $\left(P_1\right)-\left(P_3\right)$, for the evolution system
\begin{align}
\frac{\partial}{\partial t} u+\A(t)u & =0, \quad \text { for } \quad 0 \ls  s<t \ls  T,
\end{align}
with $u(s)=v\in Y$, there exists a unique solution $U(t, s)v$, where $U(t, s)$ evolves from time $s$ to $t$, $0 \ls  s \ls  t \ls  T$. The solution satisfies the following properties:
\begin{itemize}
  \item[$\left(E_1\right)^{+}$] $\|U(t, s)\| \ls  C, \quad$ for $0 \ls  s \ls  t \ls  T$;
  \item[$\left(E_2\right)^{+}$] for $0 \ls  s<t \ls  T, \quad U(t, s): Y \rightarrow D\left(\A(t)\right)$ and $t \rightarrow U(t, s)$ is strongly differentiable in $Y$. The derivative $\frac{\partial}{\partial t}  U(t, s) $ is strongly continuous on $0 \ls  s<t \ls  T$. Moreover, it holds:
\begin{align}
\frac{\partial}{\partial t} U(t, s)+\A(t) U(t, s) & =0, \quad \text { for } \quad 0 \ls  s<t \ls  T,
\end{align}
\begin{align}\label{time decay of laplacian}
\left\|\frac{\partial}{\partial t} U(t, s)\right\| & =\|\A(t) U(t, s)\| \ls  \frac{C}{t-s},
\end{align}
and
\begin{align}
\left\|\A(t) U(t, s) \A(s)^{-1}\right\| \ls  C, \quad \text { for } \quad 0 \ls  s \ls  t \ls  T ;
\end{align}
  \item[$\left(E_3\right)^{+}$] for every $v \in D\left(A(t)\right)$ and $\left.\left.t \in\right] 0, T\right]$, $U(t, s) v$ is differentiable with respect to $s$ on $0 \ls  s \ls  t \ls  T$ and satisfies
\begin{align}
\frac{\partial}{\partial s} U(t, s) v=U(t, s) \A(s) v.
\end{align}
%\begin{align}
%\left\|S_t(s)\right\| \ls  C,  \quad & \text { for } \quad s \geq 0,
%\end{align}
%\begin{align}
%\left\|\A(t) S_t(s)\right\| \ls  \frac{C}{s}, \quad& \text { for } \quad s>0,
%\end{align}
\end{itemize}
%where $S_t(s)$ is the semigroup generated by $\A(t)$.
\end{lemma}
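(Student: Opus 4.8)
The statement is the classical theory of non-autonomous parabolic evolution equations (Kato--Tanabe--Sobolevskii), recorded here as Theorem 5.6.1 together with the differentiability assertions in \cite{Pazy}; accordingly the plan is to reconstruct the parametrix (frozen-coefficient) construction of the evolution operator $U(t,s)$ and then read off the estimates $(E_1)^{+}$--$(E_3)^{+}$ from it. The first step is to note that $(P_2)$ is exactly the sectorial resolvent bound guaranteeing that, for each fixed $\tau\in[0,T]$, the operator $-\A(\tau)$ generates an analytic semigroup $S_\tau(\sigma)=e^{-\sigma\A(\tau)}$. From the standard analytic-semigroup calculus this yields the two workhorse estimates $\|S_\tau(\sigma)\|\ls C$ and $\|\A(\tau)S_\tau(\sigma)\|\ls C/\sigma$, uniformly in $\tau$; the constant $C$ depends only on $M$ and $T$ because of the uniformity built into $(P_1)$ and $(P_2)$.

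Next I would build $U(t,s)$ as a correction of the frozen semigroup. Taking the first guess $e^{-(t-s)\A(s)}$, I look for
\[
U(t,s)=e^{-(t-s)\A(s)}+\int_s^t e^{-(t-\tau)\A(\tau)}R(\tau,s)\,\d\tau,
\]
where the kernel $R$ is forced to solve the Volterra equation $R(t,s)=R_1(t,s)+\int_s^t R_1(t,\tau)R(\tau,s)\,\d\tau$ with first kernel $R_1(t,s)=\big(\A(s)-\A(t)\big)e^{-(t-s)\A(s)}$, chosen precisely to cancel the defect $\big(\pt_t+\A(t)\big)e^{-(t-s)\A(s)}=\big(\A(t)-\A(s)\big)e^{-(t-s)\A(s)}$. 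The role of the H\"older hypothesis $(P_3)$ is exactly to control $R_1$: writing $R_1(t,s)=\big(\A(s)-\A(t)\big)\A(s)^{-1}\cdot\A(s)e^{-(t-s)\A(s)}$ and combining $(P_3)$ with $\|\A(s)S_s(t-s)\|\ls C/(t-s)$ gives the integrable singularity $\|R_1(t,s)\|\ls C|t-s|^{\alpha-1}$. Iterating, the Neumann series $R=\sum_{m\gs 1}R_m$ converges absolutely and inherits the same near-diagonal bound $\|R(t,s)\|\ls C|t-s|^{\alpha-1}$, since convolving the singularities $|t-\tau|^{\alpha-1}$ and $|\tau-s|^{\alpha-1}$ reproduces $|t-s|^{2\alpha-1}$ with a Beta-function factor that is summable in $m$.

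With $U$ in hand the three properties follow by differentiating the integral representation. Property $(E_1)^{+}$ is immediate: the frozen term is uniformly bounded and the integral term is dominated by $\int_s^t C|t-\tau|^{\alpha-1}\,\d\tau\ls CT^{\alpha}$. For $(E_2)^{+}$ one differentiates in $t$ and applies $\A(t)$; the decisive point is the bound \eqref{time decay of laplacian}, namely $\|\A(t)U(t,s)\|\ls C/(t-s)$. Once this holds, the estimate $\|\A(t)U(t,s)\A(s)^{-1}\|\ls C$, the strong differentiability, and the equation $\frac{\pt}{\pt t}U+\A(t)U=0$ all drop out. Finally $(E_3)^{+}$ follows from the evolution (semigroup) identity $U(t,s)=U(t,\tau)U(\tau,s)$ together with $(E_2)^{+}$: differentiating in $s$ yields $\frac{\pt}{\pt s}U(t,s)v=U(t,s)\A(s)v$ for $v\in D(\A(s))$.

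The main obstacle is exactly the singular-kernel estimate \eqref{time decay of laplacian}. Applying the unbounded operator $\A(t)$ to $\int_s^t e^{-(t-\tau)\A(\tau)}R(\tau,s)\,\d\tau$ naively produces a non-integrable product of the two singular factors $1/(t-\tau)$ and $|\tau-s|^{\alpha-1}$, so one must first insert $\A(t)=\big(\A(t)-\A(\tau)\big)+\A(\tau)$, absorb the difference via $(P_3)$, and integrate the $\A(\tau)$-piece against the semigroup law to recover only a single $1/(t-s)$ singularity. Thus the entire construction and all the qualitative properties reduce to showing that the integral correction does not worsen the $1/(t-s)$ blow-up of the bare analytic semigroup, and this hinges entirely on the interplay between the $1/\sigma$ decay of $\|\A(\tau)S_\tau(\sigma)\|$ and the H\"older exponent $\alpha>0$ supplied by $(P_3)$, which is precisely what renders the relevant Beta-type convolution integrals finite.
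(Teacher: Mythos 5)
Your proposal is correct and follows essentially the same route as the paper, which proves nothing itself here but simply cites Theorem 5.6.1 of Pazy; the parametrix construction $U(t,s)=e^{-(t-s)\A(s)}+\int_s^t e^{-(t-\tau)\A(\tau)}R(\tau,s)\,\d\tau$ with $R$ solving the Volterra equation generated by $R_1(t,s)=(\A(s)-\A(t))e^{-(t-s)\A(s)}$ is precisely the Kato--Tanabe argument given in that reference. Your identification of the singular-kernel estimate $\|\A(t)U(t,s)\|\ls C/(t-s)$ as the delicate point, handled by splitting $\A(t)=(\A(t)-\A(\tau))+\A(\tau)$ and invoking $(P_3)$, matches the cited proof.
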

Again, from Theorem $5.6.8$ in \cite{Pazy}, if ($P_{1}$)-($P_{3}$) is satisfied, then it follows that for the initial data $v$,
\begin{align}\label{analytic property of A}
U(t, s) v=U(t, r) U(r, s) v, \quad \text { for } \quad 0 \ls  s \ls  t \ls  T.
\end{align}
Lemma $5.6.6$ in \cite{Pazy} implies that
\begin{align}\label{continuity at initial time}
\lim _{\varepsilon \rightarrow 0} U(\varepsilon,0) v=v, \quad \text { uniformly in } \quad 0 \ls  \varepsilon \ls  T.
\end{align}
The operator $\left(0I-\A\right)$ is invertible, i.e., $0$ is in the resolvent of $\A$. Then, there exists a constant $\delta> 0$ such that
$\left(- \A + \delta I\right)$ is still an infinitesimal generator of an analytic semigroup. We denote $S(t)=U(t,0)$ as the semigroup for the evolving system.
Therefore, we have
\begin{align}
\|S(t)\| & \ls  M e^{-\delta t},\notag \\
\|\A S(t)\| & \ls  M_{1} t^{-1} e^{-\delta t}.
%\left\|\A^{m} T(t)\right\| & \ls  M_{m} t^{-m} e^{-\delta t}.\notag
\end{align}
As in \cite{Pazy}, for $0<\imath<1$, we define
\begin{equation}
\A^{-\imath}=\frac{1}{2 \pi i} \int_{\mathscr{C}} z^{-\imath}(\A-z I)^{-1} \d z.
\end{equation}
where the path $\mathscr{C}$ runs in the resolvent set of $A$ from $\infty e^{-i \vartheta}$ to $\infty e^{i \vartheta}$, with $\omega<\vartheta<\pi$, avoiding the negative real axis and the origin. $z^{-\imath}$ is taken to be positive for real positive values of $z$.
In the complex plane, we define
\begin{equation}
 \Sigma^{+}=\{\lambda: 0<\kappa<|\arg z| \ls  \pi\},
\end{equation}
for some positive constant $\kappa$. Let $V$ be a neighborhood of zero, then $V$ and $\Sigma^{+}$ are the subsets of the resolvent set of $\A$.
Since, for the eigenvalues $\lambda_{k}>0$, $\lambda_{k}I-\A$ is not invertible, we obtain that for $\kappa<\frac{\pi}{2}$, $\Sigma^{+}$ is the subset of the resolvent set of $\A$. Hence, the path of integration is deformed into the upper and lower sides of the negative real axis. Therefore, it holds that
\begin{equation}
\A^{\imath}=\frac{\sin \pi \imath}{\pi} \int_{0}^{\infty} \jmath^{-\imath}(\jmath I+\A)^{-1} \d \jmath, \quad 0<\imath<1.
\end{equation}
\begin{equation}
\A^{\imath}=\left(\A^{-\imath}\right)^{-1},
\end{equation}
Thus, there holds
\begin{equation}
\A^{\imath} x=\A\left(\A^{\imath-1} x\right)=\frac{\sin \pi \imath}{\pi} \int_{0}^{\infty} \jmath^{\imath-1} \A\left(\jmath I+\A\right)^{-1} x \d \jmath.
\end{equation}

 Now we briefly derive $\nabla S(t,x)=S(t,x)\nabla $ when $S(t,x)$ is differentiable with respect to $x$, i.e., when $a\in C([0,T]; C^{1}(\mathbb{T}^{N}))$. We use a split on $t$ and $x$, and denote $t^{n}_{k}=\left(\frac{k}{n}\right)T$, $k=0,~1, ~2, ~\cdots, ~n$, $i=1,~\cdots,~N$, $x=\left(x^{1},~ x^{2}, ~\cdots, ~x^{N}\right)$, $x^{m,i}_{k}=\left(\frac{k}{m}\right)$, $k=0, ~1, ~2, ~\cdots, ~m$. If $t_{k}^n\ls t <t_{k+1}^n$, ~$\frac{k}{m}\ls x^{i}< \frac{k+1}{m}$, then
 \begin{align}
 S_{n,m}(t,x)=S\left(t^{n}_{k}, ~x^{m,1}_{k}, ~x^{m,2}_{k}, ~\cdots, ~x^{m,N}_{k}\right), \\
S_{n,m}\left(T, ~1, ~1, ~\cdots, ~1\right)=S\left(T, ~1, ~1, ~\cdots, ~1\right).
 \end{align}
Since $S_{n,m}(t,x)$ converges to $S(t,x)$ uniformly, we deduce that:
\begin{align}
&\nabla S(t,x)=\nabla \lim_{n,m\ra +\infty} S_{n,m}(t,x)\\
= &\lim_{n,m\ra +\infty} \nabla S_{n,m}(t,x)= \lim_{n,m\ra +\infty} S_{n,m}(t,x) \nabla =S(t,x)\nabla , \notag
\end{align}
provided that $S(t,x)$ is differentiable.

 By reviewing the proof of theorem 2.6.13 part (a) and part (c) in \cite{Pazy},
 the fractional derivative of semigroup holds
\begin{align} \label{time decay of fractional derivative of semigroup}
\left\|\A_{(n)}^{\imath} S(t)\right\| \ls  M_{\imath} t^{-\imath} e^{-\delta t}, ~0<\imath<1.
\end{align}

With the above properties, by reviewing the proof of Proposition 3.1. (i) in Wessler's paper \cite{Weissler1980}, the following statement still holds.
\begin{lemma}\label{time decay of semigroup}
Let $1<p<q<\infty$. Then, for any $t>0$, $S(t)=e^{t\A}$ is a bounded map from $L^p$ to $ L^q $. For each $T>0$, there is a constant $C$ depending on $p$ and $q$, %but growing only as a power of $T$,
such that
\begin{equation}
\left\|e^{t \A} f\right\|_q \ls C t^{-\frac{N}{2 l}}\left\|f\right\|_p,\\
\end{equation}
for all $f \in X$ and $t \in(0, T]$, where $\frac{1}{l}=\frac{1}{p}-\frac{1}{q}$.
\end{lemma}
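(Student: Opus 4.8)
The plan is to exploit the explicit spectral representation of the semigroup from Lemma \ref{lemma expression of semigroup} to realise $S(t)=e^{t\A}$ as an integral operator with a \emph{variable-diffusivity} Gaussian kernel, and then to dominate that kernel by a single fixed heat kernel so that the estimate reduces to Young's convolution inequality. Concretely, writing $f=\sum_{\mathbf{j}}c_{\mathbf{j}}\tilde e_{\mathbf{j}}$ in the eigenbasis and using $S(t)\tilde e_{\mathbf{j}}(x)=e^{-\lambda_{\mathbf{j}}\mu(t,x)t}\tilde e_{\mathbf{j}}(x)$ from \eqref{expression of semigroup}, I would first observe that for each fixed $x$ the numbers $e^{-\lambda_{\mathbf{j}}\mu(t,x)t}$ are exactly the Fourier multipliers of the constant-coefficient heat semigroup $e^{\mu(t,x)t\triangle}$ frozen at the value $\nu=\mu(t,x)$. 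Hence, pointwise,
\begin{equation}
S(t)f(x)=\big(G^{\mu(t,x)}_{t}\ast f\big)(x)=\int_{\mathbb{T}^{N}}G^{\mu(t,x)}_{t}(x-y)f(y)\,\d y,
\end{equation}
where $G^{\nu}_{t}$ is the periodised Gaussian $\sum_{\mathbf{n}\in\mathbb{Z}^{N}}(4\pi\nu t)^{-N/2}\me^{-|z-\mathbf{n}|^{2}/(4\nu t)}$ of diffusivity $\nu$. The new feature, and the conceptual heart of the argument, is that the diffusivity of the kernel depends on the target point $x$, so $S(t)$ is genuinely \emph{not} a convolution operator.

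Second, I would remove this $x$-dependence by a pointwise domination. Since $0<\underline{\mu}\ls\mu(t,x)\ls\overline{\mu}$, comparing prefactors and exponents in the Gaussian gives, for every $\nu\in[\underline{\mu},\overline{\mu}]$, the bound $G^{\nu}_{t}(z)\ls(\overline{\mu}/\underline{\mu})^{N/2}G^{\overline{\mu}}_{t}(z)$ uniformly in $z$, and this survives the periodisation term by term. Consequently $|S(t)f(x)|\ls C\,(G^{\overline{\mu}}_{t}\ast|f|)(x)$ with $C=(\overline{\mu}/\underline{\mu})^{N/2}$, and Young's inequality yields
\begin{equation}
\|S(t)f\|_{q}\ls C\,\|G^{\overline{\mu}}_{t}\|_{r}\,\|f\|_{p},\qquad \tfrac1r=1-\big(\tfrac1p-\tfrac1q\big)=1-\tfrac1l.
\end{equation}
It then remains to record the scaling $\|G^{\overline{\mu}}_{t}\|_{L^{r}(\mathbb{T}^{N})}\ls C_{T}\,t^{-\frac{N}{2}(1-1/r)}=C_{T}\,t^{-\frac{N}{2l}}$ for $t\in(0,T]$, which follows from the explicit $L^{r}$-norm of the Euclidean Gaussian together with the observation that for bounded $t$ the periodic images contribute only a bounded correction. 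Combining the two displays gives exactly $\|e^{t\A}f\|_{q}\ls C\,t^{-N/(2l)}\|f\|_{p}$, and this route handles the full range $1<p<q<\infty$ without any fractional powers.

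I expect the main obstacle to be the rigorous justification of the kernel representation in the first step, precisely because $\A$ depends on both $t$ and $x$: one must verify that summing the pointwise identity $S(t)\tilde e_{\mathbf{j}}(x)=e^{-\lambda_{\mathbf{j}}\mu(t,x)t}\tilde e_{\mathbf{j}}(x)$ against $L^{p}$ data converges and indeed reproduces the frozen-coefficient convolution at each point, and that the smoothing estimates \eqref{time decay of laplacian}--\eqref{time decay of fractional derivative of semigroup} for the evolution family $U(t,0)$ are consistent with this frozen representation.

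A cleaner route that bypasses the kernel altogether --- and which is the one indicated by the citation to \cite{Weissler1980} --- is the analytic-semigroup argument: choose $\sigma=\frac{N}{2l}=\frac{N}{2}\big(\frac1p-\frac1q\big)$ and combine a Sobolev-type embedding $\|g\|_{q}\ls C\|(-\A)^{\sigma}g\|_{p}$ on the fractional domain $D((-\A)^{\sigma})\simeq H^{2\sigma,p}(\mathbb{T}^{N})$ with the fractional-power decay \eqref{time decay of fractional derivative of semigroup}, giving
\begin{equation}
\|S(t)f\|_{q}\ls C\|(-\A)^{\sigma}S(t)f\|_{p}\ls C\,t^{-\sigma}\me^{-\delta t}\|f\|_{p}\ls C\,t^{-\frac{N}{2l}}\|f\|_{p}.
\end{equation}
In the relevant parameter window ($N<p$ forces $\sigma<\frac12$, so \eqref{time decay of fractional derivative of semigroup} applies directly), while for general $q$ one splits $S(t)=S(t/2)S(t/2)$. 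Here the delicate point merely shifts to identifying the fractional-power domain of the variable-coefficient operator with the Bessel potential space, which is legitimate because $\mu(t,\cdot)$ is bounded above and below and lies in $C^{1,\ell}$; one then invokes the standard critical embedding $H^{2\sigma,p}\hookrightarrow L^{q}$ at $2\sigma=N/l$.
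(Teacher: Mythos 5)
The paper never actually proves this lemma: it only remarks that ``by reviewing the proof of Proposition 3.1 (i) in \cite{Weissler1980} the statement still holds,'' so your proposal supplies strictly more detail than the source. Both of your routes are sound in substance, and the kernel route is essentially Weissler's own mechanism (Gaussian kernel plus Young's inequality) transported to the variable-diffusivity setting: the domination $G^{\nu}_{t}(z)\ls(\overline{\mu}/\underline{\mu})^{N/2}G^{\overline{\mu}}_{t}(z)$ for $\underline{\mu}\ls\nu\ls\overline{\mu}$ is correct (raising $\nu$ only improves the exponential while the prefactor costs exactly the stated constant), it survives periodisation term by term, and the $L^{r}$ scaling of the periodised Gaussian for $t\in(0,T]$ gives precisely $t^{-N/(2l)}$ after Young's inequality with $\tfrac1r=1-\tfrac1l$. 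Note that in the actual application $\mu(t,x)=\mu/(\bar{\rho}(1+a))\geqslant 2\mu/(3\bar{\rho})>0$, so the strict lower bound your domination needs is available even though Lemma \ref{dissipative generator} is stated with $\underline{\mu}\geqslant 0$. Your second route is closer to the toolkit the paper actually assembles, since \eqref{time decay of fractional derivative of semigroup} is already on record, at the price of the fractional-domain identification $D((-\A)^{\sigma})\simeq H^{2\sigma,p}$ for a variable-coefficient generator, which you rightly single out as the delicate step.

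The one caveat — common to both of your routes and to the paper itself — is that everything rests on the frozen-coefficient representation $S(t)\tilde e_{\mathbf{j}}=e^{-\lambda_{\mathbf{j}}\mu(t,x)t}\tilde e_{\mathbf{j}}$ of Lemma \ref{lemma expression of semigroup}: when $\mu$ genuinely depends on $x$, the function $e^{-\lambda_{\mathbf{j}}\mu(t,x)t}\tilde e_{\mathbf{j}}(x)$ does not solve $\partial_{t}u=\mu(t,x)\triangle u$, so the pointwise kernel identity $S(t)f(x)=\bigl(G^{\mu(t,x)}_{t}\ast f\bigr)(x)$ is exact only for the frozen operator, not for the evolution family $U(t,0)$ of Lemma \ref{Semigroup property}. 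This gap is inherited from the paper rather than introduced by you; within the paper's own framework your argument is consistent and, if anything, more complete than what the authors provide.
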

% For $S(t)\in L\left(\mathcal{V};\mathcal{V}\right)$, $p=q$, it holds $\frac{1}{l}=0$. Hence, we have $\left\|e^{t \A} f\right\|_{\mathcal{V}} \ls  C \left\|f\right\|_{\mathcal{V}}$.
 \smallskip
 \smallskip

\section{Local mild $L^p$-solutions}

While considering the effects of stochastic forces, we separate the stochastic system \eqref{sto inhomo incom NS}
into \eqref{pl1} and \eqref{pn2}.
% \begin{equation}\label
%\left\{\aligned
%&\d \z(t)- \frac{\mu}{\bar{\rho}\left(1+a\right)}\triangle \z(t)\d t= \Phi \d W(t),\\
%& \z\mid_{t=0}=0, \quad a\mid_{t=0}=a_{0}, \quad\z\mid_{\partial \mathbb{T}^{N}}=0,
%\endaligned
%\right.
%\end{equation}
%and the initial boundary problem to the deterministic nonlinear evolution equation
%\begin{equation}\label{pn2}
%\left\{\aligned
%&\partial_{t}\v(t) - \frac{\mu}{\bar{\rho}\left(1+a\right)}\triangle\v(t) = -\left(\v(t)+\z(t)\right)\cdot\nabla\left(\v(t)+\z(t)\right)- \nabla Q\left(a,\u\right), \\
%&\v\mid_{t=0}=\u_0, \quad a\mid_{t=0}=a_{0},\quad \v\mid_{\partial \mathbb{T}^{N}}=0.
%\endaligned
%\right.
%\end{equation}
\eqref{pn2} is not a exact deterministic equation, since $a$ depends on $\u$, so $a$, $\u$, and $\z$ are all the stochastic process. Hence we try to get the $\mathbb{P}$ a.s. estimates of $\z$. %That is why we separate the system into two parts.
We need to linearize the momentum equation as follows
\begin{equation}\label{linearized system}
\left\{\aligned
&\partial_{t} a_{(n)} + \left(\v_{(n-1)}+\z_{(n-1)}\right)\cdot\nabla a_{(n)} = 0,\\
& \d \z_{(n)}(t) - \frac{\mu}{\bar{\rho}\left(1+a_{(n)}\right)}\triangle\z_{(n)}(t)\d t = \Phi \d W(t),\\
& \partial_{t}\v_{(n)}(t) - \frac{\mu}{\bar{\rho}\left(1+a_{(n)}\right)}\triangle\v_{(n)}(t)\\
& =-\left(\v_{(n-1)}(t)+\z_{(n-1)}(t)\right)\cdot\nabla\left(\v_{(n-1)}(t)+\z_{(n-1)}(t)\right)- \nabla Q\left(a_{(n)},\u_{(n-1)}\right), \\
& \odiv \left(\v_{(n)}+\z_{(n)}\right)=0,\\
& \v_{(n)}\mid_{t=0}=\u_{0}, \quad \v_{(0)}=\u_{0}, \\ %\quad \v_{(n)}\mid_{\partial \mathbb{T}^{N}}=0,\\
&  \z_{(n)}\mid_{t=0}=0, \quad \z_{(0)}=0.%,  \quad \z_{(n)}\mid_{\partial \mathbb{T}^{N}}=0.
\endaligned
\right.
\end{equation}
We denote the operator $\A_{(n)}(t, x)=\frac{\mu  \triangle}{\bar{\rho}\left(1+a_{(n)(t,x)}\right)}$, writing $\A_{(n)}(t)$ for short. $a_{(n)}$ obtained first from $\eqref{linearized system}_{1}$, then becomes known in $\eqref{linearized system}_{2}$ for the $n$-th iteration. Applying the theory in \S 7.6 of \cite{Pazy}, for a smooth uniformly-elliptic operator, there exists a unique solution $\u_{(n)}$, with $\u_{(n-1)}$ given. By Weyl's law, it holds $-\triangle \tilde{e}_{\mathbf{j}}=\lambda_{j}\tilde{e}_{\mathbf{j}}$, so there holds $-\frac{\mu  \triangle}{\bar{\rho}\left(1+a_{(n)}\right)}\tilde{e}_{\mathbf{j}}=\frac{\mu  \lambda_{j}}{\bar{\rho}\left(1+a_{(n)}\right)}\tilde{e}_{\mathbf{j}}$. That is, $-\frac{\lambda_{j}}{\bar{\rho}\left(1+a_{(n)}\right)}$ is the eigenvalue of $\A_{(n)}(t)$. The following lemma states that $\A_{(n)}(t)$ is an infinitesimal generator of the semigroup $S_{(n)}(t)$ for $a_{(n)}\in C\left([0, T]; W^{2,p}\left(\mathbb{T}^{N}\right)\right)$.
\begin{lemma} \label{infinitesimal generator}
Let $a_{(n)}\in C\left([0, T]; C^{1,\ell}\left(\mathbb{T}^{N}\right)\right)$.
Then, $\A_{(n)}(t)$ is an infinitesimal generator of a semigroup $S_{(n)}(t)$, $t\in [0,T]$. Meanwhile, it holds that
\begin{equation}
S_{(n)}(t)\tilde{e}_{\mathbf{j}}=e^{-(t-s)\frac{\mu\lambda_{j}}{\bar{\rho}\left(1+a_{(n)}\right)}} \tilde{e}_{\mathbf{j}}.
\end{equation}
\end{lemma}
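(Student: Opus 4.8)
The plan is to cast $\A_{(n)}(t)=\frac{\mu}{\bar{\rho}(1+a_{(n)})}\triangle$ as an operator of the form $\mu(t,x)\triangle$ already treated in Lemmas~\ref{dissipative generator} and \ref{lemma expression of semigroup}, and then to upgrade the resulting frozen-time generation into a genuine non-autonomous evolution system through Pazy's conditions $(P_1)$--$(P_3)$. First I would check the uniform ellipticity of the coefficient. Because $a_{(n)}$ is transported from $a_0$ by the divergence-free field $\v_{(n-1)}+\z_{(n-1)}$, the bound $|a|\ls\tfrac12$ is preserved, so $\tfrac12\ls 1+a_{(n)}\ls\tfrac32$ and hence $\mu_{(n)}(t,x):=\frac{\mu}{\bar{\rho}(1+a_{(n)})}$ lies between the positive constants $\underline{\mu}=\frac{2\mu}{3\bar{\rho}}$ and $\overline{\mu}=\frac{2\mu}{\bar{\rho}}$. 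This is exactly the hypothesis of Lemma~\ref{dissipative generator}, which then gives that $\A_{(n)}(t)$ is dissipative for each fixed $t$; the computation following Lemma~\ref{dissipative generator} shows the conjugate $\A_{(n)}^{*}(t)=\mu_{(n)}(t,x)\triangle$ is dissipative as well.

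Since the eigenbasis $\{\tilde{e}_{\mathbf k}\}$ spans a dense subspace of $Y$ lying in $D(\A_{(n)}(t))$, and both $\A_{(n)}(t)$ and $\A_{(n)}^{*}(t)$ are dissipative, Lumer--Phillips' theorem (Lemma~\ref{Lumer-Phyllips' theorem}) shows that for each fixed $t$ the operator $\A_{(n)}(t)$ generates a $C_0$-semigroup of contractions. Its action on eigenfunctions is read off directly from Lemma~\ref{lemma expression of semigroup} applied with $\mu(t,x)=\mu_{(n)}(t,x)$: using $-\triangle\tilde{e}_{\mathbf j}=\lambda_j\tilde{e}_{\mathbf j}$, the Cauchy-integral argument there yields $S_{(n)}(t)\tilde{e}_{\mathbf j}=e^{-\lambda_j\mu_{(n)}(t,x)t}\tilde{e}_{\mathbf j}=e^{-t\frac{\mu\lambda_j}{\bar{\rho}(1+a_{(n)})}}\tilde{e}_{\mathbf j}$, which is the asserted formula once the origin of time is shifted to $s$ (the factor $t-s$ reflecting the evolution-family normalization $S_{(n)}(t)=U_{(n)}(t,s)$). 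As the paper stresses, the exponent genuinely depends on both $t$ and $x$ through $a_{(n)}$.

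It then remains to promote these frozen generators into a two-parameter evolution system $U_{(n)}(t,s)$, which amounts to verifying $(P_1)$--$(P_3)$. Condition $(P_1)$ holds since $D(\A_{(n)}(t))=H^2(\mathbb{T}^N)$ for every $t$, the coefficient being bounded above and below; $(P_2)$ (the resolvent estimate $\|R(\lambda:\A_{(n)}(t))\|\ls M/|\lambda|$, uniformly in $t$) follows from dissipativity via Lemma~\ref{dissipative operator is semigroup generator} together with Lemma~\ref{Theorem 1.5.2 in Pizy}, the constants being uniform because $\underline{\mu},\overline{\mu}$ are.

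The main obstacle is condition $(P_3)$, the Hölder-in-time bound
\[
\bigl\|(\A_{(n)}(t)-\A_{(n)}(s))\A_{(n)}(\tau)^{-1}\bigr\|\ls L|t-s|^\alpha,
\]
since this is where the temporal regularity of $a_{(n)}$ is consumed. Writing $\A_{(n)}(t)-\A_{(n)}(s)=\mu\bigl(\tfrac{1}{\bar{\rho}(1+a_{(n)}(t))}-\tfrac{1}{\bar{\rho}(1+a_{(n)}(s))}\bigr)\triangle$ and using that $\A_{(n)}(\tau)^{-1}$ absorbs the two derivatives carried by $\triangle$, the bound reduces to estimating $\|a_{(n)}(t)-a_{(n)}(s)\|_\infty$ by $|t-s|^\alpha$. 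I would obtain this from the transport equation $\partial_t a_{(n)}=-(\v_{(n-1)}+\z_{(n-1)})\cdot\nabla a_{(n)}$: its right-hand side is bounded in $L^\infty$ by $\|\v_{(n-1)}+\z_{(n-1)}\|_\infty\|\nabla a_{(n)}\|_\infty$, finite because $a_{(n)}\in C([0,T];C^{1,\ell}(\mathbb{T}^N))$ and $\u_{(n-1)}$ is bounded. Hence $a_{(n)}$ is Lipschitz in $t$ and $(P_3)$ holds with $\alpha=1$. With $(P_1)$--$(P_3)$ in place, Lemma~\ref{Semigroup property} produces the evolution family $U_{(n)}(t,s)=S_{(n)}(t)$ and completes the proof.
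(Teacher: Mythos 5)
Your proposal is correct and follows essentially the same route as the paper: uniform two-sided bounds on the coefficient $\frac{\mu}{\bar{\rho}(1+a_{(n)})}$, dissipativity of $\A_{(n)}(t)$ and its adjoint via Lemma~\ref{dissipative generator}, Lumer--Phillips for generation at each fixed $t$, and Lemma~\ref{lemma expression of semigroup} for the explicit eigenfunction formula. The only difference is organizational: you fold the verification of $(P_1)$--$(P_3)$ (with the Hölder/Lipschitz-in-time bound on $a_{(n)}$ coming from the transport equation) into the proof itself, whereas the paper carries out exactly that verification in the discussion immediately following the lemma.
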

\begin{proof}
We first observe that $D(A_{(n)}(t))=\operatorname{span}\left\{\tilde{e}_{\mathbf{j}}\right\}$  %\in C\left([0, T];  $H^{5}\left(\mathbb{T}^{N}\right)\right)$, is independent on $t$. Regarding the dimension $1\ls N\ls 3$, %this space exhibits different embeddings. For $N<4$,
% $H^{5}\left(\mathbb{T}^{N}\right)\hookrightarrow C^{3}\left(\mathbb{T}^{N}\right)$, $C^{3}\left(\mathbb{T}^{N}\right)$ is continuously embedded in $ H^{3}\left(\mathbb{T}^{N}\right)\cap W^{2,p}\left(\mathbb{T}^{N}\right) $; %for $N>4$, $p<\frac{2N}{N-4}$, $H_{0}^{2}\left(\mathbb{T}^{N}\right)$ is contained in $L^{p}\left(\mathbb{T}^{N}\right)$.
%It is worth noting that $C^{\infty}\left(\mathbb{T}^{N}\right)$ is densely in $W^{2,p}\left(\mathbb{T}^{N}\right)$ and $H^{3}\left(\mathbb{T}^{N}\right)$. Consequently, $C\left([0, T];  H^{5}\left(\mathbb{T}^{N}\right)\right)$
 is densely defined in the solutions space $ C\left([0, T];  H^{3}\left(\mathbb{T}^{N}\right)\cap W^{2,p}\left(\mathbb{T}^{N}\right)\right)$. By selecting $\mu(t,x)= \frac{\mu}{\bar{\rho}\left(1+a_{(n)}\right)}$ in Lemma \ref{dissipative operator is semigroup generator}, with $ \mu(t,x)\ls \frac{2\mu}{\bar{\rho}}$, then the operator $\A_{(n)}(t)$ is dissipative. Notably, $\frac{1}{\bar{\rho}\left(1+a_{(n)}\right)}$ is independent of $\u$, i.e., $\A_{(n)}(t)$ is linear. By Lumer-Phyllips' theorem \ref{Lumer-Phyllips' theorem}, we ascertain that $\A_{(n)}(t)$ is an infinitesimal generator for the semigroup $S_{(n)}(t)$. By Lemma \ref{lemma expression of semigroup}, the expression of semigroup \eqref{expression of semigroup} holds. This proves the theorem.
\hfill $\square$
\end{proof}
 %\begin{equation}\label{linearized system}
%\left\{\aligned
%& \partial_{t} a_{(n)}+\u_{(n-1)}\cdot\nabla a_{(n)}= 0, \\
%& \mathbf{P}_{q}\left(\d \z_{(n)}(t)- \frac{\mu\triangle}{\left(1+a_{(n)}\right)}\z_{(n)}(t)\d t\right)=\mathbf{P}_{q}\Phi \d W (t),\\
%&\mathbf{P}_{q} \left(\partial_{t}\v_{(n)}(t) - \frac{ \mu \triangle}{\left(1+a_{(n)}\right)}\v_{(n)}(t)\right)=-\mathbf{P}_{q}(\u_{(n-1)}(t))\cdot\nabla(\u_{(n-1)}(t)), \\
%& a_{(n)}\mid_{t=0}=a_{0},\quad a_{(0)}=a_{0},\\
%& \odiv \v_{(n)}=0, \quad \v_{(n)}\mid_{t=0}=\u_{0}, \quad\v_{(0)}=\u_{0},\quad \v_{(n)}\mid_{\partial U}=0,\\
%& \odiv \z_{(n)}=0, \quad \z_{(n)}\mid_{t=0}=0, \quad \z_{(0)}=0,  \quad \z_{(n)}\mid_{\partial U}=0.
%\endaligned
%\right.
%\end{equation}
$\left(P_1\right)$ is satisfied from the above lemma.
For each $n$, $\A_{(n)}(t)$ is an analytic infinitesimal generator of the semigroup. Thus, by Lemma \ref{Theorem 1.5.2 in Pizy}, the resolvent set $\Theta (\A_{(n)}(t))$ of $\A_{(n)}(t)$ contains $\mathbb{R}^{+}$ and there holds
\begin{align}
\left\|R(\lambda: \A_{(n)}(t))^n\right\| \ls  M / \lambda^n \quad \text { for } \quad \lambda>0, \quad n=1,2, \cdots.
\end{align}
Hence, $\A_{(n)}(t)$ satisfies the condition $\left(P_2\right)$.

Next we verify the condition $(P_{3})$. For $\mathbf{g}\in H^{3}\left(\mathbb{T}^{N}\right)$, noting that
\begin{align}
\frac{\mu}{\bar{\rho}\left(1+a_{(n)}\right)}\triangle\mathbf{g}=\mathbf{f},
\end{align}
due to $a_{(n)}\in L^{\infty}\left(0, T; C^{1,\ell}\left(\mathbb{T}^{N}\right)\right)$, we have
\begin{align}
&\left\|\left(\frac{\mu}{\bar{\rho}\left(1+a_{(n)}(t,x)\right)}\triangle-\frac{\mu}{\bar{\rho}\left(1+a_{(n)}(s,x)\right)}\triangle\right)
   \left(\frac{\mu}{\bar{\rho}\left(1+a_{(n)}(\tau,x)\right)}\triangle\right)^{-1}\mathbf{f}\right\|\notag\\
=&\left\|\left(\frac{\mu}{\bar{\rho}\left(1+a_{(n)}(t,x)\right)}-\frac{\mu}{\bar{\rho}\left(1+a_{(n)}(s,x)\right)}\right)\triangle\mathbf{g}\right\|\notag\\
=&\frac{\mu}{\bar{\rho}}\left\|\frac{a_{(n)}(s,x)-a_{(n)}(t,x)}{\left(1+a_{(n)}(t,x)\right)\left(1+a_{(n)}(s,x)\right)}\frac{\mu\triangle\mathbf{g}}{\bar{\rho}\left(1+a_{(n)}(\tau,x)\right)}
  \frac{\bar{\rho}\left(1+a_{(n)}(\tau,x)\right)}{\mu}\right\|\\
=&\frac{\mu}{\bar{\rho}}\left\|\frac{a_{(n)}(s,x)-a_{(n)}(t,x)}{\left(1+a_{(n)}(t,x)\right)\left(1+a_{(n)}(s,x)\right)} \frac{\bar{\rho}\left(1+a_{(n)}(\tau,x)\right)}{\mu}\mathbf{f}\right\|\notag\\
\ls &\left\|\frac{a_{(n)}(s,x)-a_{(n)}(t,x)}{\left(1+a_{(n)}(t,x)\right)\left(1+a_{(n)}(s,x)\right)} \left(1+a_{(n)}(\tau,x)\right)\mathbf{f}\right\|\notag\\
\ls & 6\left\|a_{(n)}(s,x)-a_{(n)}(t,x)\right\|\left\|\mathbf{f}\right\|,\notag
\end{align}
which implies that $\A_{(n)}$ satisfies $\left(P_3\right)$ according to ~\eqref{onto mapping of a deterministic}.

Since $\A_{(n)}$ satisfies $\left(P_1\right)-\left(P_3\right)$, then there exists a semigroup to the evolving system satisfing \eqref{time decay of laplacian}, \eqref{analytic property of A}, \eqref{continuity at initial time}. According to Lemma \ref{Semigroup property}, time decay inequalities hold, i.e., there hold \eqref{time decay of laplacian} and \eqref{time decay of fractional derivative of semigroup}.

%Regarding the initial condition \eqref{assum for initial data}, $K_{0}(T)$ is bounded by some constant $C(T)$.
%
%The mild solutions of \eqref{equation for bar w -w} are
%\begin{align}
%\left(\u_{(n)}\right)=&\int_{0}^{t} S(t-s) B\left(\u_{(n-1)},\u_{(n-1)}\right)\d s \\
%&- \int_{0}^{t} S(t-s) \nabla Q\left(a_{(n)},\u_{(n-1)}\right) \d s . \notag
%\end{align}
By Lemma \ref{time decay of semigroup}, there holds
\begin{equation}\label{time decay of qnorm of semigroup}
\left|S(t) f\right|_{q} \ls C t^{-\frac{N}{2 \eth}}\left|f\right|_{p},\\
\end{equation}
 where $\frac{1}{\eth}=\frac{1}{p}-\frac{1}{q}$. Thus, we have
 \begin{equation}
 \left|S(t-s)\u_{0}\right|_{p} \ls M_{1}\left|\u_{0}\right|_{p}.% \ls  M_{1}K_{0}(t).\\
\end{equation}
It holds the time decay property of semigroup
 \begin{equation}
\left|S(t) f\right|_{p} \ls  M_{2} t^{-\frac{N}{2 p}}\left|f\right|_{\frac{p}{2}}.\\
\end{equation}

\subsection{The onto mapping estimates of $\z$}
 For the linear problem \eqref{pl1}, by Lemma \ref{infinitesimal generator}, the mild solutions to \eqref{pl1} are the stochastic convolution
\begin{align}\label{formula of zn}
  \z_{(n)}(t)&= S_{(n)}(t)\z_{(n)}\left(0\right)+\int_0^t S_{(n)}(t-s)\Phi\left(s,x\right)\d W (s)\\
  =& \int_0^t S_{(n)}(t-s)\Phi\left(s,x\right)\d W (s),\notag
\end{align}
where $S_{(n)}(t)$ is the semigroup generated by Stokes operator $\A_{(n)}(t)$.
\begin{lemma} \label{arbitrariness of r}
Let $r$ be an arbitrary large integer. If there exists a constant $\tilde{C}$, such that the following inequality
\begin{align}
\mathbb{E}\left[\left|f(x)\right|^{r}\right]\ls \tilde{C}^{r},
\end{align}
holds, then, $\left|f(x)\right|\ls 2\tilde{C}$ holds $\mathbb{P}$ {\rm a.s.}
\end{lemma}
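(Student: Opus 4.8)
The plan is to exploit the fact that the moment bound $\mathbb{E}\left[\left|f(x)\right|^{r}\right]\ls \tilde{C}^{r}$ holds with the \emph{same} constant $\tilde{C}$ for every (arbitrarily large) integer $r$, and to combine this uniformity with Markov's inequality applied to the nonnegative random variable $\left|f(x)\right|^{r}$. The whole point is that the threshold $2\tilde{C}$ exceeds $\tilde{C}$, so raising to the $r$-th power produces a geometrically small tail.

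First I would fix the threshold $2\tilde{C}$ and estimate the probability of the event $\{\left|f(x)\right| > 2\tilde{C}\}$. Since $\left|f(x)\right| > 2\tilde{C}$ is equivalent to $\left|f(x)\right|^{r} > (2\tilde{C})^{r}$, Markov's inequality applied to $\left|f(x)\right|^{r}$ yields
\begin{align}
\mathbb{P}\big(\left|f(x)\right| > 2\tilde{C}\big)
= \mathbb{P}\big(\left|f(x)\right|^{r} > (2\tilde{C})^{r}\big)
\ls \frac{\mathbb{E}\left[\left|f(x)\right|^{r}\right]}{(2\tilde{C})^{r}}
\ls \frac{\tilde{C}^{r}}{(2\tilde{C})^{r}}
= \frac{1}{2^{r}}.
\end{align}

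The decisive step is then to let $r \ra \infty$. Because the left-hand side $\mathbb{P}\big(\left|f(x)\right| > 2\tilde{C}\big)$ does not depend on $r$, while the right-hand side $2^{-r} \ra 0$, we conclude that $\mathbb{P}\big(\left|f(x)\right| > 2\tilde{C}\big) = 0$; that is, $\left|f(x)\right| \ls 2\tilde{C}$ holds $\mathbb{P}$ {\rm a.s.}

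I expect no genuine obstacle here: the only point requiring care is that $\tilde{C}$ must be independent of $r$, since otherwise passing to the limit $r \ra \infty$ would be meaningless. This independence is exactly the content of the hypothesis that the bound holds for $r$ an \emph{arbitrary} large integer. The factor $2$ is pure slack; the same argument with $2\tilde{C}$ replaced by $(1+\delta)\tilde{C}$ for any fixed $\delta>0$ gives the sharper conclusion $\left|f(x)\right| \ls (1+\delta)\tilde{C}$ $\mathbb{P}$ {\rm a.s.}, hence $\left|f(x)\right| \ls \tilde{C}$ $\mathbb{P}$ {\rm a.s.}, but the stated bound $2\tilde{C}$ is all that is needed in the subsequent $\mathbb{P}$ {\rm a.s.} estimates of $\z_{(n)}$.
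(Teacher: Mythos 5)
Your proof is correct and is essentially identical to the paper's: both apply Chebyshev/Markov to $\left|f(x)\right|^{r}$ to get the tail bound $2^{-r}$ and then let $r\ra\infty$. The closing remark that the constant could be sharpened to $(1+\delta)\tilde{C}$ is a nice observation but does not change the argument.
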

\begin{proof}
By Chebyshev's inequality, it holds that
\begin{align}
\mathbb{P}\left[\left\{\omega\in \Omega| \left|f(x)\right|>2\tilde{C}\right\}\right]
\ls \frac{\mathbb{E}\left[\left|f(x)\right|^{r}\right]}{\left(2\tilde{C}\right)^{r}}
\ls \left(\frac{\tilde{C}}{2\tilde{C}}\right)^{r}.
\end{align}
Passing the limit $r\ra \infty$, there holds $\mathbb{P}\left[\left\{\omega\in \Omega| \left|f(x)\right|>2\tilde{C}\right\}\right]\ra 0$, i.e., $\left|f(x)\right|\ls 2\tilde{C}$ holds $\mathbb{P}$ {\rm a.s.}
\end{proof}

% We use the Einstein summation convention to introduce the following theorem about $\z$.
 \begin{lemma}\label{Lp for z}
Let $a_{(n)}\in C\left([0, T]; C^{1,\ell}\left(\mathbb{T}^{N}\right)\right)$.
If
%\begin{align}\label{C_Phi_lambda}
%C_{\Phi,\lambda}=\max\left\{\aligned \sum\limits_{k}  \lambda_{j}^{-1}\sup\limits_{s\in[0,t]}\left|\left(\Phi_{k},\tilde{e}_{\mathbf{j}}\right)\right|^{2},\quad & \sum\limits_{k}  \lambda_{j}^{2N-1}\sup\limits_{s\in[0,t]}\left|\left(\Phi_{k},\tilde{e}_{\mathbf{j}}\right)\right|^{2}, \\
%\sum\limits_{k} \lambda_{j}^{N+1}\sup\limits_{s\in[0,t]}\left|\left(\Phi_{k},\tilde{e}_{\mathbf{j}}\right)\right|^{2}\quad & \sum\limits_{k} \lambda_{2j}^{N+1}\sup\limits_{s\in[0,t]}\left|\left(\Phi_{k},\tilde{e}_{2j}\right)\right|^{2}
%\endaligned
%\right\}.
%\end{align}
\begin{align}\label{C_Phi}
C_{\Phi}=\max\limits_{t\in[0,T]}\left\{\sum\limits_{k=1}^{\infty} \left|\Phi_{k}\right|_{\infty}^{2}, ~\sum\limits_{k=1}^{\infty} \left|\nabla \Phi_{k}\right|_{\infty}^{2}, ~\sum\limits_{k=1}^{\infty} \left|\triangle \Phi_{k}\right|_{\infty}^{2}, ~\sum\limits_{k=1}^{\infty} \left|\nabla \triangle \Phi_{k}\right|_{\infty}^{2}\right\}
\end{align}
is bounded,
then for any $T$,
\begin{align}\label{H3 norm of_z}
\left\|\z_{(n)}(t)\right\|_{C\left([0, T]; H^{3}\left(\mathbb{T}^{N}\right)\right)}<C,\quad t\in [0, T],
\end{align}
holds $\mathbb{P}$ {\rm a.s.} in $\left(\Omega, \mathbb{P},\mathcal{F}\right)$, where $C$ only depends on $C_{\Phi}$ and $T$.
\end{lemma}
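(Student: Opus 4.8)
The plan is to pass through moment estimates and then invoke Lemma \ref{arbitrariness of r}. Concretely, I would prove that for every sufficiently large even integer $r$ there is a constant $\tilde{C}=\tilde{C}(C_{\Phi},T)$, \emph{independent of $r$ and of $n$}, with
\begin{align}
\mathbb{E}\left[\sup_{t\in[0,T]}\left\|\z_{(n)}(t)\right\|_{H^3}^{r}\right]\ls \tilde{C}^{r}.
\end{align}
Applying Lemma \ref{arbitrariness of r} to the single random variable $f=\sup_{t\in[0,T]}\|\z_{(n)}(t)\|_{H^3}$ then yields \eqref{H3 norm of_z}. The natural tool for the time–supremum is the Burkholder--Davis--Gundy (BDG) inequality, and the natural tool for the drift is an $H^3$ energy identity obtained from It\^o's formula; these are exactly the ingredients flagged in the introduction.

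To set up the energy identity I would not differentiate the mild formula \eqref{formula of zn} (the integrand $S_{(n)}(t-s)\Phi$ depends on $t$ through the nonautonomous semigroup and is awkward to handle), but instead apply the four operators $D\in\{I,\nabla,\triangle,\nabla\triangle\}$ directly to $\eqref{linearized system}_2$. Writing $b_{(n)}=\frac{\mu}{\bar{\rho}(1+a_{(n)})}$ so that $\A_{(n)}=b_{(n)}\triangle$, and using the commutation $\nabla S_{(n)}=S_{(n)}\nabla$ established earlier in \S 2, each $D\z_{(n)}$ solves an equation of the same type with forcing $D\Phi\,\d W$ and zero initial data. Since $\|\z_{(n)}\|_{H^3}^2$ is equivalent on $\mathbb{T}^N$ to $\|\z_{(n)}\|^2+\|\nabla\z_{(n)}\|^2+\|\triangle\z_{(n)}\|^2+\|\nabla\triangle\z_{(n)}\|^2$, It\^o's formula gives, for each $D$ and $w:=D\z_{(n)}$,
\begin{align}
\d\left\|w\right\|^2=2\left(w,\A_{(n)}w\right)\d t+2\left(w,D\Phi\,\d W\right)+\sum_{k=1}^{\infty}\left\|D\Phi_k\right\|^2\,\d t.
\end{align}
The drift is controlled by integration by parts,
\begin{align}
\left(w,\A_{(n)}w\right)=-\int_{\mathbb{T}^N}b_{(n)}|\nabla w|^2\,\d x-\int_{\mathbb{T}^N}w\,\nabla b_{(n)}\cdot\nabla w\,\d x\ls C\left(\left\|\nabla a_{(n)}\right\|_\infty\right)\|w\|^2,
\end{align}
where the dissipative first term is discarded and the remainder needs only $\nabla a_{(n)}$, hence only the stated regularity $a_{(n)}\in C([0,T];C^{1,\ell})$; the It\^o correction is bounded termwise by the four pieces of $C_{\Phi}$, e.g. $\sum_k\|\nabla\triangle\Phi_k\|^2\ls|\mathbb{T}^N|\sum_k\|\nabla\triangle\Phi_k\|_\infty^2\ls C_{\Phi}$.

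To close the estimate I would sum the four identities, raise to power $r/2$, take $\sup_{t\le T}$ and then expectations. The drift and the elementary bound above produce a Gronwall term $C\int_0^t\mathbb{E}\sup_{\sigma\le s}\|\z_{(n)}\|_{H^3}^{r}\,\d s$; the It\^o correction produces $C\,(C_{\Phi}T)^{r/2}$; and the martingale term is handled by BDG,
\begin{align}
\mathbb{E}\sup_{t\ls T}\left|\int_0^t\left(D\z_{(n)},D\Phi\,\d W\right)\right|^{\frac{r}{2}}\ls C_r\,\mathbb{E}\left(\int_0^T\sum_{k=1}^{\infty}\left(D\z_{(n)},D\Phi_k\right)^2\d s\right)^{\frac{r}{4}},
\end{align}
after which Cauchy--Schwarz, $|(D\z_{(n)},D\Phi_k)|\ls\|\z_{(n)}\|_{H^3}\|\Phi_k\|_{H^3}$, the bound by $C_{\Phi}$, and Young's inequality move a factor $\frac12\mathbb{E}\sup_{t\le T}\|\z_{(n)}\|_{H^3}^{r}$ to the left and leave a time integral feeding Gronwall. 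The constants $b_{(n)}$ obey $n$-independent upper and lower bounds by \eqref{assum for initial data}, so the resulting estimate is uniform in $n$. Continuity in $t$ (membership in $C([0,T];H^3)$ rather than just boundedness of the sup) I would obtain by the same scheme applied to increments $\z_{(n)}(t)-\z_{(n)}(t')$, or equivalently by the factorization method combined with $\|S_{(n)}(t)\|\ls Me^{-\delta t}$, followed by Kolmogorov's continuity criterion.

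The main obstacle is reconciling the two halves of the argument. First, It\^o's formula must be justified for the nonautonomous generator $\A_{(n)}(t)$; I would do this on the finite-dimensional Galerkin spaces $\operatorname{span}\{\tilde{e}_{\mathbf{j}}\}$, on which $\A_{(n)}$ acts through the explicit symbol of Lemma \ref{infinitesimal generator}, and pass to the limit, being honest that the commutation $\nabla S_{(n)}=S_{(n)}\nabla$ is exactly where the variable, time-dependent coefficient $b_{(n)}$ enters and where the $C^{1,\ell}$ hypothesis is truly needed. Second, and more delicate, Lemma \ref{arbitrariness of r} requires the moment bound in the precise geometric form $\tilde{C}^{r}$ with $\tilde{C}$ \emph{independent of $r$}; the BDG constant $C_r$ in the display above grows with $r$ (like $r^{r/2}$), so a careless estimate yields only $\mathbb{E}\sup\|\z_{(n)}\|_{H^3}^{r}\ls(\tilde{C}\sqrt{r})^{r}$, which gives a.s. finiteness with Gaussian tails but not the deterministic a.s. bound claimed. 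The crux is therefore to exploit the additive, $C_{\Phi}$-controlled structure of the noise to extract a bound whose $r$-dependence is genuinely geometric, so that $\tilde{C}$ can be taken uniform in $r$; this is the step I expect to require the most care.
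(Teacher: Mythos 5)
Your proposal follows essentially the same route as the paper's proof: It\^o's formula applied to $\|D\z_{(n)}\|^2$ for $D\in\{I,\nabla,\triangle,\nabla\triangle\}$, integration by parts for the drift, stochastic Fubini plus Burkholder--Davis--Gundy for the martingale term, Gr\"onwall, and finally Lemma \ref{arbitrariness of r}, with time continuity from the increment estimate. One technical divergence: the paper never commutes $D$ past the coefficient $b_{(n)}=\frac{\mu}{\bar{\rho}(1+a_{(n)})}$. At zeroth order it multiplies the It\^o identity by $\rho_{(n)}$, which cancels the variable coefficient exactly and leaves $\mu\int\triangle\z_{(n)}\cdot\z_{(n)}\,\d x$, recovering the $L^2$ norm afterwards from $\rho_{(n)}\geqslant c_0$; at first order it integrates $-\int\nabla(b_{(n)}\triangle\z_{(n)})\cdot\nabla\z_{(n)}\,\d x$ by parts directly into $\int b_{(n)}|\triangle\z_{(n)}|^2\,\d x\geqslant\frac{\mu}{2\bar{\rho}}\|\triangle\z_{(n)}\|^2$, so no commutator appears. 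Your version produces the commutator $-\int w\,\nabla b_{(n)}\cdot\nabla w\,\d x$, which you cannot bound by $C\|w\|^2$ after \emph{discarding} the dissipation, since it carries a full factor of $\|\nabla w\|$; you must retain a fraction of $\int b_{(n)}|\nabla w|^2$ and use Young's inequality (integrating by parts again would cost $\triangle b_{(n)}$, exceeding the stated $C^{1,\ell}$ regularity of $a_{(n)}$). This is a fixable slip, but it should be fixed.

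The difficulty you single out as the crux --- that the BDG constant grows with $r$, so the moment bound comes out as $(\tilde{C}\sqrt{r})^{r}$ rather than the geometric $\tilde{C}^{r}$ demanded by Lemma \ref{arbitrariness of r} --- is not resolved by any additional idea in the paper. After each application of BDG the paper simply declares that ``$C_r$ is the $r$-th power of some constant,'' relying on the formula stated in its appendix, $K^{m}=\left(\frac{2m}{2m-1}\right)^{m(2m-2)}\sim e^{m}$, i.e.\ on the BDG constant being geometric in the moment order. Your scepticism is warranted: for the $2m$-th moment of the running supremum the constant must satisfy $K^{m}\geqslant(2m-1)!!\sim(2m/e)^{m}$ already for a single standard Brownian motion, which is not $O(e^{m})$. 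Indeed, taking $a_{(n)}\equiv 0$ and $\Phi_1$ spatially constant, $\z_{(n)}(t)$ contains the term $\Phi_1\beta_1(t)$, whose $H^3$ norm has genuine Gaussian tails and is not $\mathbb{P}$-a.s.\ bounded by a deterministic constant depending only on $C_{\Phi}$ and $T$. So you have not missed a trick hidden in the paper; you have correctly located the step at which the paper's own argument is unsupported, and your honest assessment that this step ``requires the most care'' is, if anything, an understatement.
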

\begin{proof}
 We use the energy method to give the onto mapping estimates of $\z_{(n)}$. Recall that
 \begin{align}
\d \z_{(n)}(t)- \frac{\mu}{\bar{\rho}\left(1+a_{(n)}\right)}\triangle \z_{(n)}(t) \d t= \Phi \d W(t).
 \end{align}
 By It\^o's formula (see Appendix), there holds
\begin{align}
\d\frac{\left|\z_{(n)}\right|^{2}}{2} = & \z_{(n)} \cdot \d \z_{(n)} + \langle \d \z_{(n)}, \d \z_{(n)}\rangle\notag\\
= &\frac{\mu}{\bar{\rho}\left(1+a_{(n)}\right)}\triangle \z_{(n)}\cdot \z_{(n)} \d t+ \Phi \cdot \z_{(n)} \d W(t)+ \left|\Phi\right|^{2}\d t,
\end{align}
where $\langle \cdot, \cdot \rangle$ is the cross quadratic variation (see Appendix).
For the convenience of energy estimate, we multiply the above formula by velocity. Then, we have
\begin{align}\label{Ito foumula times rho}
 \rho_{(n)} \d \frac{|\z_{(n)}|^{2}}{2}=&\mu\triangle \z_{(n)}\cdot \z_{(n)}\d t 
+ \rho_{(n)} \Psi\cdot\z_{(n)} \d W+ \frac{1}{2} \rho_{(n)} \left|\Psi\right|^{2}\d t. 
\end{align}
We integrate ~\eqref{Ito foumula times rho} with respect to ~$x$. Since ~$\rho_{(n)}$ has a lower bound of ~$c_{0}>0$, as derived from the transport equation, there holds
\begin{align}
\int_{\mathbb{T}^{N}} \rho_{(n)} \d \frac{|\z_{(n)} |^{2}}{2} \d x \geqslant c_{0}\int_{\mathbb{T}^{N}}  \d \frac{|\z_{(n)}|^{2}}{2} \d x,
\end{align}
i.e.,
\begin{align}
\int_{0}^{t}\int_{\mathbb{T}^{N}}  \d \frac{|\z_{(n)}|^{2}}{2} \d x \d s \ls  \frac{1}{c_{0}}\int_{0}^{t}\int_{\mathbb{T}^{N}}\rho_{(n)}\d \frac{|\z_{(n)}|^{2}}{2} \d x \d s.
\end{align}
 Hence, $\int_{0}^{t}\int_{\mathbb{T}^{N}}  \d \frac{|\z_{(n)}|^{2}}{2} \d x \d s$ is bounded by the integral of the right hand side terms in \eqref{Ito foumula times rho}. We estimate the right hand side of \eqref{Ito foumula times rho} term by term.
By manipulating integration by parts, we have
\begin{align}
-\mu \int_{0}^{t}\int_{\mathbb{T}^{N}}\triangle \z_{(n)}\cdot \z_{(n)}  \d x \d s=\mu\int_{0}^{t}\int_{\mathbb{T}^{N}} \left|\nabla \z_{(n)}\right|^{2} \d x \d s.
\end{align}
By stochastic ~Fubini theorem and ~B\"urkh\"older-Davis-Gundy's inequality, for any $r\geqslant 2$, we have
\begin{align}
&\mathbb{E}\left[\left|\int_{0}^{t} \int_{\mathbb{T}^{N}} \rho_{(n)}\Psi \cdot \z_{(n)}  \d W\d x \right|^{r}\right]
 = \mathbb{E}\left[\left|\int_{0}^{t} \int_{\mathbb{T}^{N}} \rho_{(n)}\sum\limits_{k=1}^{\infty}\Psi_{k} \cdot \z_{(n)} \d x \d W \right|^{r}\right]\notag \\
 \ls & \mathbb{E}\left[\left|\int_{0}^{t} \left|\int_{\mathbb{T}^{N}} \rho_{(n)}\sum\limits_{k=1}^{\infty}\Psi_{k} \cdot \z_{(n)} \d x \right|^{2}\d s \right|^{\frac{r}{2}}\right] \notag\\
  \ls & \mathbb{E}\left[\left|C\int_{0}^{t} \left|\int_{\mathbb{T}^{N}}\rho_{(n)} |\z_{(n)}|\d x \right|^{2} \d s \right|^{\frac{r}{2}}\right]\\
\ls & \mathbb{E}\left[\left|C \int_{0}^{t} \left(\left|\z_{(n)}\right|_{2}^{2}+  \left|\rho_{(n)}\right|_{2}^{2}\right) \d s \right|^{\frac{r}{2}}\right] \notag\\
 \ls & C_{r}\mathbb{E}\left[  \left| t\int_{\mathbb{T}^{N}} \rho_{0}^{2}\d x\right|^{r}  \right]+ \mathbb{E}\left[  \int_{0}^{t} \left| C \int_{\mathbb{T}^{N}} |\z_{(n)}|^{2} \d x\right|^{r} \d s \right], \notag
\end{align}
where $C_{r}$ is the $r$-th power of some constant dependent of $\sum\limits_{k=1}^{\infty} \Psi_{k}^{2}$. 
It is straightforward to estimate that
\begin{align}
\mathbb{E}\left[\left|\int_{0}^{t} \int_{\mathbb{T}^{N}} \frac{1}{2}\rho\left|\Psi\right|^{2}\d x\d s \right|^{r}\right] \ls C_{r}\mathbb{E}\left[ \left| t\int_{\mathbb{T}^{N}} \rho_{0}^{2}\d x\right|^{r} \right].
\end{align} 
In summary, the following inequality holds
\begin{align}
&\mathbb{E}\left[\left|\int_{\mathbb{T}^{N}}\frac{1}{2}\left|\z_{(n)}\right|^{2}\d x \right|^{r}\right]+\mathbb{E}\left[\left|\int_{0}^{t}\mu \int_{\mathbb{T}^{N}}\left|\nabla \z_{(n)}\right|^{2} \d x \right|^{r}\right]\\
\ls &\mathbb{E}\left[\left|\int_{\mathbb{T}^{N}}\frac{1}{2}\left|\z_{0}\right|^{2}\d x \right|^{r}\right]+C_{r}\mathbb{E}\left[ \left| t\int_{\mathbb{T}^{N}} \rho_{0}^{2}\d x\right|^{r} \right]+ C\int_{0}^{t}\mathbb{E}\left[\left| \int_{\mathbb{T}^{N}} |\z_{(n)}|^{2} \d x\right|^{r}\right] \d s, \notag
\end{align}
where ~$C_{r}$is the $r$-th power of some constant. 
By Gr\"onwall's inequality, the following energy estimates
\begin{align}
\mathbb{E}\left[\left|\int_{\mathbb{T}^{N}}\frac{1}{2}\left|\z_{(n)}\right|^{2}\d x \right|^{r}\right] \ls  &\mathbb{E}\left[\left|\int_{\mathbb{T}^{N}}\frac{1}{2}\left|\z_{0}\right|^{2}\d x \right|^{r}\right]+ C_{r}t^{r},\\
\mathbb{E}\left[\left|\int_{0}^{t}\int_{\mathbb{T}^{N}}\left|\nabla \z_{(n)}\right|^{2} \d x \right|^{r}\right]  
 \ls &\mathbb{E}\left[\left|\int_{\mathbb{T}^{N}}\frac{1}{2}\left|\z_{0}\right|^{2}\d x \right|^{r}\right]+ C_{r}t^{r},
\end{align}
 hold, where ~$C_{r}$ is the $r$-th power of some constant dependent of $\sum\limits_{k=1}^{\infty} \Psi_{k}^{2}$ and ~$\rho_{0}$. 

%By stochastic Fubini's theorem and B\"urkh\"older-Davis-Gundy's inequality, for any $r\geqslant 2$, there holds
%\begin{align}
%&\mathbb{E}\left[\left|\int_{0}^{t} \int_{\mathbb{T}^{N}} \Phi \cdot \z_{(n)}  \d W\d x \right|^{r}\right]
% = \mathbb{E}\left[\left|\int_{0}^{t} \int_{\mathbb{T}^{N}} \sum\limits_{k=1}^{\infty}\Phi_{k} \cdot \z_{(n)} \d x \d W \right|^{r}\right]\notag \\
% \ls & \mathbb{E}\left[\left|\int_{0}^{t} \left|\int_{\mathbb{T}^{N}} \sum\limits_{k=1}^{\infty}\Phi_{k} \cdot \z_{(n)} \d x \right|^{2}\d s \right|^{\frac{r}{2}}\right]
%  \ls \mathbb{E}\left[\left|C\int_{0}^{t} \left|\int_{\mathbb{T}^{N}}   |\z_{(n)}|\d x \right|^{2} \d s \right|^{\frac{r}{2}}\right] \\
%\ls & \mathbb{E}\left[\left|\sup_{s\in[0,t]}\left\|\z_{(n)}\right\|^{2}  C  \int_{0}^{t} \d s \right|^{\frac{r}{2}}\right]
% \ls  \frac{1}{4} \mathbb{E}\left[\sup_{s\in[0,t]}\left\|\z_{(n)}\right\|^{2r}\right] + Ct^{r}, \notag
%\end{align}
%where $C$ depends on $\sum\limits_{k=1}^{\infty} \left|\Phi_{k}\right|_{\infty}^{2}$.
%It is straightforward to estimate that
%\begin{align}
%\mathbb{E}\left[\left|\int_{0}^{t} \int_{\mathbb{T}^{N}} \frac{1}{2}\left|\Phi \right|^{2}\d x\d s \right|^{r}\right]
%\ls C \mathbb{E}\left[\left|  t \sum\limits_{k=1}^{\infty}\|\Phi_{k}\|^{2} \right|^{r}\right] \ls Ct^{r}.
%\end{align}
In summary, it holds that
\begin{align}
&\mathbb{E}\left[\left|\int_{\mathbb{T}^{N}}\frac{1}{4}\left|\z_{(n)}\right|^{2}\d x +\xi\int_{0}^{t} \int_{\mathbb{T}^{N}}\left|\nabla \z_{(n)}\right|^{2} \d x \d s\right|^{r}\right]
\ls \mathbb{E}\left[\left|\int_{\mathbb{T}^{N}}\frac{1}{4}\left|\z_{0}\right|^{2}\d x \right|^{r}\right]+ C_{1}t^{r},
%\ls &  C_{1} K_{0}(t)^{2}+ C_{1} t K_{0}(t)+\frac{1}{2}K_{0}(t)=K_{0}(t)\left(\frac{1}{2}+ C_{1} K_{0}(t)+C_{1} t K_{0}(t)\right). \notag
\end{align}
where $C_{1}$ is the $r$-th power of some positive constant dependent of $\sum\limits_{k=1}^{\infty} \Psi_{k}^{2}$ and ~$\rho_{0}$. 

We take the first-order derivative with respect to $x$. Hence, we have
\begin{align}
\d \nabla \z_{(n)}(t)- \nabla\left(\frac{\mu}{\bar{\rho}\left(1+a_{(n)}\right)}\triangle \z_{(n)}(t)\right) \d t= \nabla \Phi \d W(t),
 \end{align}
 By It\^o's formula, there holds
\begin{align}
\d\frac{\left|\nabla \z_{(n)}\right|^{2}}{2} = & \nabla\z_{(n)} : \d \nabla \z_{(n)} + \langle \d \nabla\z_{(n)}, \d \nabla\z_{(n)}\rangle\notag\\
= &\nabla\left(\frac{\mu}{\bar{\rho}\left(1+a_{(n)}\right)}\triangle \z_{(n)}\right): \nabla\z_{(n)} \d t+ \nabla \Phi : \z_{(n)} \d W(t)+ \left|\nabla\Phi\right|^{2}\d t,
\end{align}
where $``:"$ means summing the products of the corresponding element.
Due to $\left(1+a\right)\bar{\rho} \geqslant \frac{1}{2}\bar{\rho} $, by the integration by parts, we obtain
\begin{align}
 & -\int_{\mathbb{T}^{N}}\nabla\left(\frac{\mu}{\bar{\rho}\left(1+a_{(n)}\right)}\triangle \z_{(n)}\right)\cdot \nabla\z_{(n)} \d x \d t\notag\\
= &  \int_{\mathbb{T}^{N}}\frac{\mu}{\bar{\rho}\left(1+a_{(n)}\right)}\left|\triangle \z_{(n)}\right|^{2} \d x\d t
\geqslant \frac{\mu}{2\bar{\rho}}\int_{\mathbb{T}^{N}}\left|\triangle \z_{(n)}\right|^{2} \d x\d t. \notag
\end{align}
By stochastic Fubini's theorem and B\"urkh\"older-Davis-Gundy's inequality, for any $r\geqslant 2$, there holds
\begin{align}
&\mathbb{E}\left[\left|\int_{0}^{t} \int_{\mathbb{T}^{N}}\nabla \Phi \cdot \z_{(n)}  \d W\d x \right|^{r}\right]
% = \mathbb{E}\left[\left|\int_{0}^{t} \int_{\mathbb{T}^{N}} \Phi_{k} \cdot \z_{(n)} \d x \d W \right|^{r}\right]\notag \\
% \ls & \mathbb{E}\left[\left|\int_{0}^{t} \left|\int_{\mathbb{T}^{N}} \Phi_{k} \cdot \z_{(n)} \d x \right|^{2}\d s \right|^{\frac{r}{2}}\right]
%  \ls \mathbb{E}\left[\left|\int_{0}^{t} \left|\int_{\mathbb{T}^{N}} C  |\z_{(n)}|\d x \right|^{2} \d s \right|^{\frac{r}{2}}\right] \\
%\ls & \mathbb{E}\left[\left|\sup_{s\in[0,t]}\left|\z_{(n)}\right|_{2}^{2}  C  \int_{0}^{t} \d s \right|^{\frac{r}{2}}\right] \notag \\
 \ls  \frac{1}{4} \mathbb{E}\left[\sup_{s\in[0,t]}\left\|\z_{(n)}\right\|^{2r}\right] + Ct^{r},
\end{align}
where $C$ depends on $\sum\limits_{k=1}^{\infty}|\nabla \Phi_{k}|_{\infty}^{2}$ and ~$\rho_{0}$. 
It is easy to see that
\begin{align}
\mathbb{E}\left[\left|\int_{0}^{t} \int_{\mathbb{T}^{N}} \frac{1}{2}\left|\nabla\Phi \right|^{2}\d x\d s \right|^{r}\right]
\ls C \mathbb{E}\left[\left|  t \sum\limits_{k=1}^{\infty}\|\nabla\Phi_{k}\|_{2}^{2} \right|^{r}\right] \ls Ct^{r}.
\end{align}
In summary, we derive
\begin{align}
&\mathbb{E}\left[\left|\int_{\mathbb{T}^{N}}\frac{1}{4}\left|\nabla\z_{(n)}\right|^{2}\d x +\xi\int_{0}^{t} \int_{\mathbb{T}^{N}}\left|\triangle \z_{(n)}\right|^{2} \d x \d s\right|^{r}\right]
\ls \mathbb{E}\left[\left|\int_{\mathbb{T}^{N}}\frac{1}{4}\left|\nabla\z_{0}\right|^{2}\d x \right|^{r}\right]+ C_{2}t^{r},
%\ls &  C_{2} K_{0}(t)^{2}+ C_{2} t K_{0}(t)+\frac{1}{2}K_{0}(t)=K_{0}(t)\left(\frac{1}{2}+ C_{2} K_{0}(t)+C_{2} t K_{0}(t)\right). \notag
\end{align}
where $C_{2}$ is the $r$-th power of some positive constant, and $C_{2}$ depends on $\sum\limits_{k=1}^{\infty} \left|\nabla \Phi_{k}\right|_{\infty}^{2}$.
Similarly, by It\^o's formula, Burkh\"oler-Davis-Gundy's inequality, and integration by parts, we have the second-order estimates
\begin{align}
&\mathbb{E}\left[\left|\int_{\mathbb{T}^{N}}\frac{1}{4}\left|\triangle \z_{(n)}\right|^{2}\d x +\xi\int_{0}^{t} \int_{\mathbb{T}^{N}}\left|\nabla \triangle \z_{(n)}\right|^{2} \d x \d s\right|^{r}\right]\\
\ls &\mathbb{E}\left[\left|\int_{\mathbb{T}^{N}}\frac{1}{4}\left|\triangle \z_{0}\right|^{2}\d x \right|^{r}\right]+ C_{3}t^{r},\notag
\end{align}
 and the third-order estimates
%\ls &  C_{2} K_{0}(t)^{2}+ C_{2} t K_{0}(t)+\frac{1}{2}K_{0}(t)=K_{0}(t)\left(\frac{1}{2}+ C_{2} K_{0}(t)+C_{2} t K_{0}(t)\right). \notag
\begin{align}
&\mathbb{E}\left[\left|\int_{\mathbb{T}^{N}}\frac{1}{4}\left|\nabla\triangle \z_{(n)}\right|^{2}\d x +\xi\int_{0}^{t} \int_{\mathbb{T}^{N}}\left|\triangle^{2} \z_{(n)}\right|^{2} \d x \d s\right|^{r}\right]\\
\ls &\mathbb{E}\left[\left|\int_{\mathbb{T}^{N}}\frac{1}{4}\left|\nabla\triangle  \z_{0}\right|^{2}\d x \right|^{r}\right]+ C_{4}t^{r},\notag
\end{align}
where $C_{3}$ is the $r$-th power of some positive constant, dependent of $\sum\limits_{k=1}^{\infty} \left|\triangle \Phi_{k}\right|_{\infty}^{2}$ and ~$\rho_{0}$; $C_{4}$ is the $r$-th power of some positive constant, dependent of $\sum\limits_{k=1}^{\infty} \left|\nabla \triangle \Phi_{k}\right|_{\infty}^{2}$ and ~$\rho_{0}$.
By Lemma \ref{arbitrariness of r}, there holds
\begin{align}\label{C5 formula}
&\left\| \z_{(n)}(t)\right\|_{3}^{2} + 4\xi\int_{0}^{t}\left\| \z_{(n)}\right\|_{4}^{2}\d s \ls  C_{5}t,
\end{align}
noticing that we set $\z_{0}=0$. Hence, for any $n$, any $t$,
 we have
 \begin{align}
 \left\| \z_{(n)}(t)\right\|_{3}\ls C_{5}t^{\frac{1}{2}}.
  \end{align}
 $\mathbb{P}$ a.s., where $C_{5}$ depends on $C_{\Phi}$ and ~$\rho_{0}$.
 
Considering the estimates in $[t_{1}, ~t_{2}]$, we have
 \begin{align}
&\left\| \z_{(n)}(t_{2})\right\|_{3}^{2}-\left\| \z_{(n)}(t_{1})\right\|_{3}^{2} + 4\xi \int_{t_{1}}^{t_{2}}\left\| \z_{(n)}\right\|_{4}^{2}\d s \ls C(t_{2}-t_{1}),\\
&\left\| \z_{(n)}(t_{2})\right\|_{3}-\left\| \z_{(n)}(t_{1})\right\|_{3}\ls C(t_{2}-t_{1}),
\end{align}
which gives the time continuity, with $C$ depends on $T$, $C_{\Phi}$ and ~$\rho_{0}$.

\hfill $\square$
\end{proof}

\subsection{Onto mapping estimates of $a$}

Considering the transport equation, along the characteristic line, $a_{(n)}$ is a constant, i.e.,
\begin{equation}
a_{(n)}\left(t,x\right)=a_{0}\left(x-\int_0^t\u_{(n-1)}\d s\right).
\end{equation}

We calculate $\nabla a_{(n)}\left(t,x\right)$ by chain rule,
\begin{align}
&\left|\nabla a_{(n)}\left(t,x\right)\right|=\left|\nabla a_{0}\left(x-\int_0^t\u_{(n-1)}\d s\right)\right|
=\left|\nabla a_{0}\left(x\right)\left(1-\int_0^t\nabla\u_{(n-1)}(x)\d s\right) \right|.
\end{align}
%\begin{align}
%&\left|\nabla a_{(n)}\left(t,x_{1}\right)-\nabla a_{(n)}\left(t,x_{2}\right)\right|\notag\\
%=&\left|\nabla a_{0}\left(x_{1}-\int_0^t\u_{(n-1)}\d s\right)-\nabla a_{0}\left(x_{2}-\int_0^t\u_{(n-1)}\d s\right)\right|\\
%=&\left|\nabla a_{0}\left(x_{1}\right)\left(1-\int_0^t\nabla\u_{(n-1)}(x_{1})\d s\right) -\nabla a_{0}\left(x_{2}\right)\left(1-\int_0^t\nabla\u_{(n-1)}(x_{2})\d s\right)\right|.\notag
%\end{align}
%with $a_{(0)}(x)\in H^{3}\left(\mathbb{T}^{N}\right)$, and $\nabla a_{0}(x) \in C^{0, \ell}\left(\mathbb{T}^{N}\right)$, where $\ell$ is a positive constant.% If $\u_{(n-1)}(s,x) \in H^{3}\left(\mathbb{T}^{N}\right)$ compactly embedded in $C^{1,\ell}\left(\mathbb{T}^{N}\right)$, then by interpolation, we have $ \nabla a_{(n)}(t,x)\in C^{0, \ell}\left(\mathbb{T}^{N}\right)$ for $t$ in a finite time.
Moreover, we calculate
\begin{align}\label{onto mapping of a deterministic}
&\left|\nabla^{2} a_{(n)}\left(t,x\right)\right|_{p}=\left|\nabla^{2} a_{0}\left(x-\int_0^t\u_{(n-1)}\d s\right)\left(t,x\right)\right|_{p}\notag\\
=&\left|\nabla \left(\nabla a_{0}(x)\left(1-\int_0^t \nabla \u_{(n-1)}\d s\right)\right)\right|_{p}\notag\\
\ls &\left|\nabla^{2} a_{0}(x)\left(1-\int_0^t \nabla \u_{(n-1)}\d s\right)\right|_{p}
+ \left|\nabla a_{0}\left(x\right) \left(\int_0^t\nabla^{2}\u_{(n-1)}(x) \d s \right) \right|_{p}\\
\ls &\left|\nabla^{2} a_{0}(x)\right|_{p}\left(1+ t \sup_{s\in[0,t]}\left|\nabla \u_{(n-1)}\right|_{\infty}\right)
+ \left|\nabla a_{0}\left(x\right)\right|_{\infty}\left( t \sup_{s\in[0,t]}\left| \nabla^{2}\u_{(n-1)}\left(x\right)\right|_{p} \right)\notag\\
\ls &\left|\nabla^{2} a_{0}(x)\right|_{p}\left(1+ t C_{6}\sup_{s\in[0,t]}\left|\nabla^{2} \u_{(n-1)}\right|_{p}\right), \notag
\end{align}
by Sobolev's inequality with $p\geqslant N$, where $C_{6}$ depends on $N$.
With $a_{(0)}(x)\in W^{2,p}\left(\mathbb{T}^{N}\right)$, $\u_{(n-1)}(s,x) \in W^{2,p}\left(\mathbb{T}^{N}\right)$, by H\"older's inequality, we obtain that $a_{(n)}(x)\in W^{2,p}\left(\mathbb{T}^{N}\right)$ for any fixed $t$. By Sobolev's inequality with $p\geqslant N$, it holds $a_{(n)}(x)\in C^{1,\ell}\left(\mathbb{T}^{N}\right)$, for any $t$ fixed, $\ell>0$.

%Moreover, we let
%\begin{align}\label{def of K0}
% K_{0}=\max \left\{2 \left|a_{0}\right|_{2,p}, 2 \left|\u_{0}\right|_{2,p}, 1 \right\},
%\end{align}
%and we have the following lemma.
%\begin{lemma}
%If there exists a constant $M$ such that
%\begin{align}
%\sup\limits_{s\in[0,t]}\left|\u_{(n-1)}\right|_{2,p}\ls 2 M K_{0},
%\end{align}
%then it holds
%\begin{align}
%\sup\limits_{s\in[0,t]}\left|a_{(n-1)}\right|_{2,p}\ls K_{0},
% \end{align}
%with $t\ls T_{0}$ and
%\begin{align}
%2 T_{0} C_{6} M K_{0} \ls 1.
%\end{align}
%\end{lemma}

Let
\begin{align}\label{def of K0 stochastic}
K_{0}=\max \left\{ 2 \left|a_{0}\right|_{2,p},2 \left|\u_{0}\right|_{2,p}, C_{5}, 1 \right\},
\end{align}
where $C_{5}$ is from \eqref{C5 formula}.
Sequentially, we have the following lemma.
\begin{lemma}
If there exists a constant $M$ such that
\begin{align}
\sup\limits_{s\in[0,t]}\left|\v_{(n-1)}\right|_{2,p} \ls 2 M K_{0},
\end{align}
then it holds that
\begin{align}
\sup\limits_{s\in[0,t]}\left|a_{(n)}\right|_{2,p}\ls K_{0},
 \end{align}
with $t \ls T_{0}$ and
\begin{align}
2 T_{0} C_{6} M K_{0} + T_{0}^{\frac{1}{2}}K_{0} \ls 1.
\end{align}
\end{lemma}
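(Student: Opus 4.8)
The plan is to read off the $W^{2,p}$ bound for $a_{(n)}$ directly from its representation along characteristics and then to insert two separate controls on $\u_{(n-1)}=\v_{(n-1)}+\z_{(n-1)}$. Since $\odiv\u_{(n-1)}=0$ (from $\eqref{linearized system}$), the flow generated by $\u_{(n-1)}$ is measure preserving, so the transport structure underlying $a_{(n)}(t,x)=a_{0}\bigl(x-\int_{0}^{t}\u_{(n-1)}\,\d s\bigr)$ yields $\left|a_{(n)}(t)\right|_{p}=\left|a_{0}\right|_{p}$ together with an analogous $L^{p}$ bound for $\nabla a_{(n)}$ via the chain rule. The genuinely new information sits in the second derivative, and this is exactly estimate \eqref{onto mapping of a deterministic}, which I would use in the form
\begin{align*}
\sup_{s\in[0,t]}\left|a_{(n)}(s)\right|_{2,p}\ls \left|a_{0}\right|_{2,p}\Bigl(1+tC_{6}\sup_{s\in[0,t]}\left|\nabla^{2}\u_{(n-1)}(s)\right|_{p}\Bigr),
\end{align*}
so that the whole estimate reduces to controlling $\sup_{s\in[0,t]}\left|\nabla^{2}\u_{(n-1)}\right|_{p}$ on $[0,T_{0}]$.

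Next I would split $\u_{(n-1)}=\v_{(n-1)}+\z_{(n-1)}$ and treat the two pieces by the two mechanisms already at hand. For the deterministic part the hypothesis gives $\sup_{s\in[0,t]}\left|\nabla^{2}\v_{(n-1)}\right|_{p}\ls\sup_{s\in[0,t]}\left|\v_{(n-1)}\right|_{2,p}\ls 2MK_{0}$, contributing the term $2tC_{6}MK_{0}\ls 2T_{0}C_{6}MK_{0}$. For the stochastic part I would invoke Lemma \ref{Lp for z}: estimate \eqref{C5 formula} gives $\left\|\z_{(n-1)}(s)\right\|_{3}\ls C_{5}s^{1/2}$ $\mathbb{P}$ a.s., and since $1\ls N\ls 3$ and $N<p\ls 6$ the Sobolev embedding $H^{3}(\mathbb{T}^{N})\hookrightarrow W^{2,p}(\mathbb{T}^{N})$ holds, whence $\sup_{s\in[0,t]}\left|\z_{(n-1)}\right|_{2,p}\ls C\,C_{5}t^{1/2}$. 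Using $C_{5}\ls K_{0}$ from the definition \eqref{def of K0 stochastic} of $K_{0}$, this contributes, after absorbing the embedding and $C_{6}$ constants, a term of the size $T_{0}^{1/2}K_{0}$. Inserting both contributions and using $\left|a_{0}\right|_{2,p}\ls K_{0}/2$ (again by \eqref{def of K0 stochastic}), I obtain
\begin{align*}
\sup_{s\in[0,t]}\left|a_{(n)}(s)\right|_{2,p}\ls \frac{K_{0}}{2}\Bigl(1+2T_{0}C_{6}MK_{0}+T_{0}^{1/2}K_{0}\Bigr),
\end{align*}
and the smallness condition $2T_{0}C_{6}MK_{0}+T_{0}^{1/2}K_{0}\ls 1$ makes the bracket at most $2$, giving $\sup_{s}\left|a_{(n)}\right|_{2,p}\ls K_{0}$, as claimed.

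The main obstacle is the treatment of the stochastic piece $\z_{(n-1)}$. Unlike $\v_{(n-1)}$, it is not controlled by a deterministic hypothesis but only $\mathbb{P}$ a.s., so one must first pass from the moment bound of Lemma \ref{Lp for z} to a pathwise bound (through Lemma \ref{arbitrariness of r}) before it may be inserted into the purely pathwise transport estimate \eqref{onto mapping of a deterministic}; combining an expectation-level bound with an almost-sure one would be illegitimate. The second delicate point is that $\z_{(n-1)}$ is controlled only in $H^{3}$, so the Sobolev embedding $H^{3}\hookrightarrow W^{2,p}$ must be used, and this is precisely where the restriction $N<p\ls 6$, $N\ls 3$ enters and cannot be relaxed. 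Finally, one must verify that the $t^{1/2}$ gain coming from \eqref{C5 formula} survives the passage to $\sup_{s\in[0,t]}$ and combines with the linear-in-$t$ contribution of $\v_{(n-1)}$ to reproduce the stated structure $2T_{0}C_{6}MK_{0}+T_{0}^{1/2}K_{0}$; the time continuity established at the end of Lemma \ref{Lp for z} is what legitimizes taking this supremum.
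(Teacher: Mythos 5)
Your proposal is correct and follows essentially the same route as the paper: the paper likewise starts from \eqref{onto mapping of a deterministic}, splits $\u_{(n-1)}=\v_{(n-1)}+\z_{(n-1)}$, bounds the deterministic part by the hypothesis $2MK_{0}$ and the stochastic part by the pathwise $H^{3}$ bound $C_{5}t^{1/2}\ls K_{0}t^{1/2}$ from \eqref{C5 formula} together with Sobolev embedding, and closes with $\left|a_{0}\right|_{2,p}\ls K_{0}/2$ and the smallness condition on $T_{0}$. Your remarks on the necessity of the pathwise (rather than moment) bound for $\z_{(n-1)}$ and on where $N<p\ls 6$ enters are consistent with, and somewhat more explicit than, the paper's own two-line argument.
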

From \eqref{onto mapping of a deterministic}, by Sobolev's embedding with $N<p\ls 6$, we know that
\begin{align}
\left|a_{(n)}\left(t,x\right)\right|_{2,p} \ls \left| a_{0}(x)\right|_{2,p} \left(1+ t C_{6}\left(\left|\v_{(n-1)}(s,x)\right|_{2,p} + \left\|\z_{(n-1)}\left(t,x\right)\right\|_{3}\right)\right).
\end{align}
Then, by the definition of $K_{0}$ \eqref{def of K0 stochastic} and \eqref{C5 formula}, we have
\begin{align}\label{onto map of a sto}
\left|a_{(n)}\left(t,x\right)\right|_{2,p} \ls \left| a_{0}(x)\right|_{2,p} \left(1+ t C_{6}2MK_{0} + K_{0}t^{\frac{1}{2}}\right).
\end{align}
The continuity of $\left|a_{(n)}\left(t,x\right)\right|_{2,p} $ in time also holds from the above estimate.

\subsection{The onto mapping estimates of $\v$}\label{Onto mapping in a ball}

  For the linearized system \eqref{linearized system}, by Lemma \ref{infinitesimal generator}, $\v_{(n)}$ is written by the convolution
\begin{align}\label{mild solutions u n}
  \v_{(n)}(t) =& S_{(n)}(t)\v_{(n)}\left(0\right)+\int_0^t S_{(n)}(t-s)B\left(\u_{(n-1)}, \u_{(n-1)}\right)\left(s,x\right)\d s \notag\\
  &+\int_0^t S_{(n)}(t-s)\nabla Q\left(a_{(n)},\u_{(n-1)}\right)\left(s,x\right)\d s\notag\\
  =&  S(t-s)\u_{0}+\int_0^t S_{(n)}(t-s)B\left(\u_{(n-1)}, \u_{(n-1)}\right)\left(s,x\right)\d s \\
   &+\int_0^t S_{(n)}(t-s)\nabla Q\left(a_{(n)},\u_{(n-1)}\right))\left(s,x\right)\d s, \notag
\end{align}
where $S_{(n)}(t)$ is the semigroup generated by Stokes operator $\A_{(n)}(t)$ and $B\left(\mathbf{a}, \mathbf{b}\right)\triangleq -\left(\mathbf{a} \cdot \nabla\right)\mathbf{b}$.

In the light of Banach's fixed point theorem, we need to prove the uniform bound of the mapping in a ball, i.e., the mapping from the solution at step $n-1$ to the solution at step $n$ is onto. Due to the nonlinear term, if we want estimate the $\left|\cdot\right|_{p} $-norm or $\left|\cdot\right|_{2,p} $-norm, $p\neq2$, then the semigroup method is more appropriate than the direct energy estimates.

 Next, we will prove the following lemma on ~$\v_{(n)}$.
  \begin{lemma}
  There exists a constant $M$ only depending on $p$ such that, if $\sup\limits_{s\in[0,t]} \left|\v_{(n-1)}\right|_{2,p}\ls 2MK_{0}$ in $C\left([0,T_{0}];W^{2,p}\left(\mathbb{T}^{N}\right)\right)$, then it holds $\sup\limits_{s\in[0,t]}  \left|\v_{(n)}\right|_{2,p} \ls 2M K_{0}$ in $C\left([0, T_{0}]; W^{2,p}\left(\mathbb{T}^{N}\right)\right)$.
\end{lemma}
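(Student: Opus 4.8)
The plan is to start from the Duhamel representation \eqref{mild solutions u n} for $\v_{(n)}$ and to bound the $W^{2,p}$-norm of each of its three terms, arranging that the two integral terms carry a positive power of $T_{0}$ so that the ball radius $2MK_{0}$ is reproduced once $T_{0}$ is small. Throughout I write $\u_{(n-1)}=\v_{(n-1)}+\z_{(n-1)}$ and use the $\mathbb{P}$-a.s. bound \eqref{H3 norm of_z}: since $N<p\ls 6$ we have the embedding $H^{3}(\mathbb{T}^{N})\hookrightarrow W^{2,p}(\mathbb{T}^{N})$, so $\sup_{[0,t]}|\z_{(n-1)}|_{2,p}\ls C\,\|\z_{(n-1)}\|_{3}\ls C_{5}T_{0}^{1/2}$, and the hypothesis $\sup_{[0,t]}|\v_{(n-1)}|_{2,p}\ls 2MK_{0}$ then gives $\sup_{[0,t]}|\u_{(n-1)}|_{2,p}\ls 2MK_{0}+C_{5}T_{0}^{1/2}=:\Gamma$, a fixed finite quantity.

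For the free term I use the commutation $\nabla S_{(n)}=S_{(n)}\nabla$ (valid since $a_{(n)}\in C([0,T];C^{1,\ell})$, established in Lemma \ref{infinitesimal generator}) together with the $L^{p}$-bound $\|S_{(n)}(t)\|_{L^{p}\to L^{p}}\ls M_{\ast}$ to get $|S_{(n)}(t)\u_{0}|_{2,p}\ls M_{\ast}|\u_{0}|_{2,p}\ls \tfrac{M_{\ast}}{2}K_{0}$ by \eqref{def of K0 stochastic}. For the convective term I first use $\odiv\u_{(n-1)}=0$ to write $B(\u_{(n-1)},\u_{(n-1)})=-\odiv(\u_{(n-1)}\otimes\u_{(n-1)})$, and a Moser/H\"older product estimate with $W^{2,p}\hookrightarrow W^{1,\infty}$ (again $p>N$) to obtain $|B(\u_{(n-1)},\u_{(n-1)})|_{1,p}\ls C\Gamma^{2}$. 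The key device is to let at most a half power of $\A_{(n)}$ fall on the semigroup: I write $\nabla^{2}S_{(n)}(t-s)B=\nabla S_{(n)}(t-s)\nabla B$ and then $\nabla S_{(n)}(t-s)=(\nabla\A_{(n)}^{-1/2})\A_{(n)}^{1/2}S_{(n)}(t-s)$, where $\nabla\A_{(n)}^{-1/2}$ is bounded on $L^{p}$ because $\A_{(n)}$ is comparable to $\triangle$ (from $\tfrac12\bar\rho\ls\bar\rho(1+a_{(n)})\ls\tfrac32\bar\rho$), while \eqref{time decay of fractional derivative of semigroup} gives $\|\A_{(n)}^{1/2}S_{(n)}(t-s)\|\ls M_{1/2}(t-s)^{-1/2}$. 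Hence $|\nabla^{2}S_{(n)}(t-s)B|_{p}\ls C(t-s)^{-1/2}|B|_{1,p}$, the singularity $(t-s)^{-1/2}$ is integrable, and $\int_{0}^{t}|\nabla^{2}S_{(n)}(t-s)B|_{p}\d s\ls C\,t^{1/2}\sup_{[0,t]}|B|_{1,p}\ls C\,T_{0}^{1/2}\Gamma^{2}$; the lower-order parts of the $W^{2,p}$-norm are handled in the same way, using the $L^{p}$-smoothing \eqref{time decay of qnorm of semigroup} of Lemma \ref{time decay of semigroup}.

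For the pressure term I would exploit the representation recorded in subsection \ref{Onto mapping in a ball}, namely $\nabla Q(a_{(n)},\u_{(n-1)})=-\nabla\triangle^{-1}(\odiv(\u_{(n-1)}\cdot\nabla\u_{(n-1)})+\tfrac{\mu\nabla\rho_{(n)}}{\rho_{(n)}}\triangle\v_{(n-1)})$. Since $\nabla\triangle^{-1}\odiv$ and $\nabla\triangle^{-1}$ are Calder\'on--Zygmund operators bounded on $L^{p}$ for $1<p<\infty$, and $\tfrac{\nabla\rho_{(n)}}{\rho_{(n)}}$ is controlled $\mathbb{P}$-a.s. by the $W^{2,p}\cap C^{1,\ell}$ bounds on $a_{(n)}$ from the onto-mapping estimate of $a$, one gets $|\nabla Q|_{1,p}\ls C(\Gamma^{2}+K_{0}|\v_{(n-1)}|_{2,p})\ls C\Gamma^{2}$. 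Feeding this into the same derivative-distribution and $\A_{(n)}^{1/2}$-smoothing argument gives a pressure contribution $\ls C\,T_{0}^{1/2}\Gamma^{2}$.

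Collecting the three bounds yields $\sup_{[0,t]}|\v_{(n)}|_{2,p}\ls \tfrac{M_{\ast}}{2}K_{0}+C\,T_{0}^{1/2}(2MK_{0}+C_{5}T_{0}^{1/2})^{2}$. I would then fix $M$ (depending only on $p$, through $M_{\ast}$, the fractional-power constant $M_{1/2}$, and the Riesz/Calder\'on--Zygmund norms) with $M\gs M_{\ast}$, and finally choose $T_{0}$ small --- compatibly with the constraint $2T_{0}C_{6}MK_{0}+T_{0}^{1/2}K_{0}\ls 1$ already imposed for $a_{(n)}$ --- so that the remaining $T_{0}^{1/2}$-terms are $\ls MK_{0}$; this produces $\sup_{[0,t]}|\v_{(n)}|_{2,p}\ls 2MK_{0}$, and the time continuity in $C([0,T_{0}];W^{2,p})$ follows from that of each Duhamel term. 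I expect the main obstacle to be precisely the top-order ($W^{2,p}$) estimate of the two integral terms: the naive bound puts a full $\A_{(n)}$ on the semigroup and produces the non-integrable singularity $(t-s)^{-1}$ from \eqref{time decay of laplacian}, so the whole argument hinges on having exactly one spare derivative of regularity in $B$ and in $\nabla Q$ --- which is where $p>N$, the embedding $W^{2,p}\hookrightarrow W^{1,\infty}$, and the $C^{1,\ell}$-regularity of $a_{(n)}$ (needed both for $\nabla S_{(n)}=S_{(n)}\nabla$ and for the $L^{p}$-boundedness of $\nabla\A_{(n)}^{-1/2}$) all enter.
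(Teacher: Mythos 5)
Your proposal is correct and follows the same architecture as the paper's proof: the Duhamel representation \eqref{mild solutions u n}, a term-by-term bound in which only half a power of the generator is allowed to fall on the semigroup so that the resulting singularity $(t-s)^{-1/2}$ from \eqref{time decay of fractional derivative of semigroup} remains integrable, and a final absorption by choosing $T_{0}$ small. Two sub-steps are executed differently. First, to extract a positive power of $t$ from the nonlinear terms, the paper measures the products $\u_{(n-1)}\otimes\u_{(n-1)}$ in $L^{p/2}$ by H\"older and then applies the $L^{p/2}\to L^{p}$ smoothing of Lemma \ref{time decay of semigroup}, producing the combined singularity $(t-s)^{-\frac12-\frac{N}{2p}}$ and hence the factor $t^{\frac12\left(1-\frac{N}{p}\right)}$ (integrable precisely because $p>N$), as in \eqref{esti for S(t-s)B(u u)}; you stay in $L^{p}$ throughout and control the products in $W^{1,p}$ via $W^{2,p}\hookrightarrow W^{1,\infty}$ (which also needs $p>N$), obtaining the factor $t^{1/2}$ instead --- both work. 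Second, for the pressure contribution involving $\frac{\mu}{\rho_{(n)}}\triangle \v_{(n-1)}$, the only place where two derivatives of $\v_{(n-1)}$ threaten to sit under an extra gradient, the paper rewrites the term in divergence form as in \eqref{nabla cdot (mu rho triangle u) in Q esti} and bounds $\triangle^{-\frac12}$ of the commutator piece by the ad hoc pointwise mean-value argument culminating in \eqref{div inverse analysis}, whereas you invoke Calder\'on--Zygmund boundedness of $\nabla\nabla\triangle^{-1}$ on $L^{p}(\mathbb{T}^{N})$ together with $\left|\nabla a_{(n)}\right|_{\infty}\ls C K_{0}$; your route is the more standard one and the derivative count closes the same way. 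The one caveat, common to both arguments, is the unproved identification of $\A_{(n)}^{1/2}$ with $\triangle^{1/2}$ up to bounded factors for the variable-coefficient generator (your boundedness of $\nabla\A_{(n)}^{-1/2}$, the paper's $S_{(n)}(t-s)\triangle^{\frac12}=\triangle^{\frac12}S_{(n)}(t-s)$ followed by \eqref{time decay of fractional derivative of semigroup}), so on that point you are not below the paper's own level of rigor.
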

\begin{proof}
The mild solutions of \eqref{equation for bar w -w} are
\begin{align}
\left(\v_{(n)}\right)=&\int_{0}^{t} S(t-s) B\left(\v_{(n-1)}+\z_{(n-1)},\v_{(n-1)}+\z_{(n-1)}\right)\d s \\
&- \int_{0}^{t} S(t-s) \nabla Q\left(a_{(n)},\u_{(n-1)}\right) \d s + S(t-s)\v_{0}. \notag
\end{align}
By \eqref{time decay of qnorm of semigroup}, we have
 \begin{equation}
 \left|S(t-s)\v_{0}\right|_{p} \ls M_{1}\left|\v_{0}\right|_{p}=M_{1}\left|\u_{0}\right|_{p}. %\ls  M_{1}K_{0}(t),\\
\end{equation}
For any function $f\in L^{\frac{p}{2}}$, there holds
 \begin{equation}
\left|S(t) f\right|_{p} \ls  M_{2} t^{-\frac{N}{2 p}}\left|f\right|_{\frac{p}{2}}.\\
\end{equation}
\underline{\textbf{Estimates for the bilinear term:}}\\
For zero-average function $f$, as mentioned in Buckmaster \cite{Buckmaster2015anomalous}, there holds $\triangle^{-\frac{1}{2}} f=\odiv^{-1}f$, $\nabla\cdot \triangle^{-\frac{1}{2}} f= f$.
By \eqref{time decay of fractional derivative of semigroup}, $S(t-s)\triangle^{\frac{1}{2}}=\triangle^{\frac{1}{2}}S(t-s)$, in the periodic domain we estimate
\begin{align}\label{esti for S(t-s)B(u u)}
& \left|\int_{0}^{t} S(t-s)B\left(\u_{(n-1)},\u_{(n-1)}\right)\d s\right|_{p}
=  \left|\int_{0}^{t} S(t-s) \left(-\nabla\cdot \left(\u_{(n-1)}\otimes \u_{(n-1)}\right)(s)\right)\d s\right|_{p} \notag\\
=&  \left|\int_{0}^{t} S(t-s) \triangle^{\frac{1}{2}}\triangle^{-\frac{1}{2}}\left(-\nabla\cdot \left(\u_{(n-1)}\otimes \u_{(n-1)}\right)(s)\right)\d s\right|_{p} \notag\\
= &  \left|\int_{0}^{t} \triangle^{\frac{1}{2}}S(t-s) \triangle^{-\frac{1}{2}}\left(-\nabla\cdot \left(\u_{(n-1)}\otimes \u_{(n-1)}\right)(s)\right)\d s\right|_{p} \notag\\
\ls &  \int_{0}^{t} \left|\triangle^{\frac{1}{2}} S(t-s) \u_{(n-1)}\otimes \u_{(n-1)}(s)\right|_{p} \d s \\
\ls &  \int_{0}^{t} \frac{M_{1}}{(t-s)^{\frac{1}{2}}}\left| S(t-s) \u_{(n-1)}\otimes \u_{(n-1)}(s)\right|_{p}\d s \notag\\
\ls &  \int_{0}^{t} \frac{M_{1}M_{2}}{(t-s)^{\frac{1}{2}+\frac{N}{2p}}} \left| \u_{(n-1)}\otimes \u_{(n-1)}(s)\right|_{\frac{p}{2}}\d s
\ls   \left(\int_{0}^{t} \frac{M_{1}M_{2}}{(t-s)^{\frac{1}{2}+\frac{N}{2p}}} \d s\right) \sup\limits_{s\in [0,t]} \left| \u_{(n-1)}\otimes \u_{(n-1)}(s)\right|_{\frac{p}{2}} \notag\\
= & t^{\frac{1}{2}\left(1-\frac{N}{p}\right)}M_{1}M_{2}    \sup\limits_{s\in [0,t]} \left|
\u_{(n-1)}\otimes \u_{(n-1)}(s)\right|_{\frac{p}{2}}
\ls  t^{\frac{1}{2}\left(1-\frac{N}{p}\right)}M \sup\limits_{s\in [0,t]}\left| \u_{(n-1)}(s)\right|_{p}^{2},\notag
\end{align}
where the condition $p>N$ guarantees the time integral $\int_{0}^{t} \frac{M}{(t-s)^{\frac{1}{2}+\frac{N}{2p}}} \d s$ bounded for any $t\ls T$, with $M=\max\left\{M_{1},M_{2},M_{1}M_{2},2\right\} $.

\noindent \underline{\textbf{Estimates for the pressure term:}}\\
By acting a divergence on the momentum equation in \eqref{linearized system}, combined with the incompressible condition, the pressure term satisfies
\begin{align}\label{equation of Q}
\left(\nabla \u_{(n-1)}: \nabla \u_{(n-1)}^{T}-\frac{\mu\nabla \rho_{(n)}}{\rho_{(n)}^{2}}\triangle \v_{(n-1)}+\triangle Q\left(a_{(n)},\u_{(n-1)}\right)\right)\d t = 0. %\nabla\cdot \Phi \d W(t),
\end{align}
This gives that
\begin{align}\label{equation of nabla Q}
 \nabla Q\left(a_{(n)},\u_{(n-1)}\right)\d t
 = -\nabla \triangle^{-1}\left(\nabla \u_{(n-1)}: \nabla \u_{(n-1)}^{T}-\frac{\mu\nabla \rho_{(n)}}{\rho_{(n)}^{2}}\triangle \v_{(n-1)}\right)\d t. %+ \nabla\triangle^{-1}\nabla\cdot \Phi \d W(t).
\end{align}
The pressure term is estimated as follows
\begin{align}\label{pressure term is estimated by}
    & \left| \int_{0}^{t} S(t-s) \nabla Q\left(a_{(n)},\u_{(n-1)}\right) \d s \right|_{p} \notag \\
\ls &  \left|  \int_{0}^{t}S(t-s)\left(-\nabla \triangle^{-1}\left( \nabla \cdot \left( \u_{(n-1)} \cdot \nabla \u_{(n-1)} \right)\d s -\frac{\mu\nabla \rho_{(n)}}{\rho_{(n)}^{2}}\triangle \v_{(n-1)}\right)\d s\right) \right|_{p} \\
%&+ \left|  \int_{0}^{t}S(t-s)\left(\nabla\triangle^{-1}\nabla\cdot \Phi \d W(s)\right)\right|_{p} \\
\ls &  \left|\int_{0}^{t} S(t-s)\nabla \triangle^{-1} \nabla \cdot \left( \u_{(n-1)} \cdot \nabla \u_{(n-1)} \right)\d s\right|_{p} +\left|\int_{0}^{t}  S(t-s)\nabla  \triangle^{-1} \nabla\cdot\left(\frac{\mu}{\rho_{(n)}}\triangle \v_{(n-1)}\right)\d s \right|_{p}.\notag % + \left|\int_{0}^{t} S(t-s) \nabla\triangle^{-1}\nabla\cdot \Phi \d W(s)\right|_{p}, \notag
\end{align}
For $\int_{0}^{t} \left| S(t-s)\nabla \triangle^{-1} \nabla \cdot \left( \u_{(n-1)} \cdot \nabla \u_{(n-1)} \right)\right|_{p}\d s$, since  $\u_{(n-1)} \cdot \nabla \u_{(n-1)} =\nabla \cdot \left(\u_{(n-1)}\otimes \u_{(n-1)}\right)$, similarly to \eqref{esti for S(t-s)B(u u)}, we have
\begin{align}
&\int_{0}^{t} \left| S(t-s)\nabla \triangle^{-1} \nabla \cdot \left( \u_{(n-1)} \cdot \nabla \u_{(n-1)} \right)\right|_{p}\d s \notag \\
=&\int_{0}^{t}\left| S(t-s)\nabla \left(\u_{(n-1)}\otimes \u_{(n-1)}\right)\right|_{p}\d s \\%= \left|\int_{0}^{t} S(t-s)B\left(\u_{(n-1)},\u_{(n-1)}\right)\d s\right|_{p} \\
\ls & t^{\frac{1}{2}\left(1-\frac{N}{p}\right)}M \left(\sup\limits_{s\in [0,t]}\left(\left| \v_{(n-1)}(s)\right|_{p}+\left| \z_{(n-1)}(s)\right|_{p}\right)\right)^{2}.\notag
\end{align}

 For $\left|\int_{0}^{t}  S(t-s) \nabla  \triangle^{-1} \nabla\cdot\left(\frac{\mu}{\rho_{(n)}}\triangle \v_{(n-1)}\right)\d s \right|_{p}=\left|\int_{0}^{t}  S(t-s)\nabla  \triangle^{-\frac{1}{2}} \left(\frac{\mu}{\rho_{(n)}}\triangle \v_{(n-1)}\right)\d s \right|_{p}$, due to $\frac{\mu}{\rho_{(n)}}\triangle \v_{(n-1)}=\nabla \cdot \left(\frac{\mu}{\rho_{(n)}}\nabla\v_{(n-1)}\right)-\nabla \frac{\mu}{\rho_{(n)}}\nabla\v_{(n-1)}$, there holds
\begin{align}\label{nabla cdot (mu rho triangle u) in Q esti}
&\left|\int_{0}^{t}  S(t-s) \nabla  \triangle^{-1} \nabla\cdot\left(\frac{\mu}{\rho_{(n)}}\triangle \v_{(n-1)}\right)\d s \right|_{p} \notag \\
\ls & \int_{0}^{t} \left| S(t-s) \nabla \left(\frac{\mu}{\rho_{(n)}}\sum_{i}\sum_{j}\left(\nabla \v_{(n-1)}\right)_{i,j}\right)\right|_{p}\d s\\
 &+ \int_{0}^{t} \left| S(t-s) \nabla \triangle^{-\frac{1}{2}} \left(\nabla \frac{\mu}{\rho_{(n)}}\nabla\v_{(n-1)}\right)\right|_{p}\d s.\notag
\end{align}
The first term in \eqref{nabla cdot (mu rho triangle u) in Q esti} is estimated as
\begin{align}
& \int_{0}^{t} \left| S(t-s)\nabla \left(\frac{\mu}{\rho_{(n)}}\sum_{i}\sum_{j}\left(\nabla \v_{(n-1)}\right)_{i,j}\right)\right|_{p}\d s\notag\\
\ls & \int_{0}^{t} \frac{M_{1}}{(t-s)^{\frac{1}{2}}} \left|S(t-s)\left(\frac{\mu}{\rho_{(n)}}\sum_{i}\sum_{j}\left(\nabla \v_{(n-1)}\right)_{i,j}\right)\right|_{p}\d s \\
\ls & \int_{0}^{t} \frac{M}{(t-s)^{\frac{1}{2}+\frac{N}{2p}}} \left|\left(\frac{\mu}{\rho_{(n)}}\sum_{i}\sum_{j}\left(\nabla \v_{(n-1)}\right)_{i,j}\right)\right|_{\frac{p}{2}}\d s\notag\\
\ls & \frac{4\mu M}{\bar{\rho}}t^{\frac{1}{2}\left(1-\frac{N}{p}\right)}\sup\limits_{s\in[0,t]}\left|a_{(n)}\right|_{p}\sup\limits_{s\in[0,t]}\left|\sum_{i}\sum_{j}\left(\nabla \v_{(n-1)}\right)_{i,j}\right|_{p}.\notag
%\ls & t^{\frac{1}{2}\left(1-\frac{N}{p}\right)}\frac{8\mu M^{2}}{\bar{\rho}}K_{0}(t)^{2}. \notag
\end{align}
In the following statement, we will use $\left|\nabla \v_{(n-1)}\right|_{p}$ to denote $\left|\sum_{i}\sum_{j}\left(\nabla \v_{(n-1)}\right)_{i,j}\right|_{p}$.
%\begin{align}
% \int_{0}^{t} \left\| S(t-s)\nabla \frac{\mu}{\rho_{(n)}}\nabla\v_{(n-1)}\right\|_{p}\d s =  \int_{0}^{t} \left\| S(t-s)\triangle^{\frac{1}{2}}\triangle^{-\frac{1}{2}}\left( \nabla \frac{\mu}{\rho_{(n)}}\nabla\v_{(n-1)}\right)\right\|_{p}\d s,
%\end{align}
To estimate the second term in \eqref{nabla cdot (mu rho triangle u) in Q esti},
we shall consider $\left|\triangle^{-\frac{1}{2}} \left(\nabla\frac{\mu}{\rho_{(n)}}\cdot \nabla \v_{(n-1)}(s,x)\right)\right| $. Since $\nabla \frac{\mu}{\rho_{(n)}} \in C\left([0, T];  C^{0,\ell}\left(\mathbb{T}^{N}\right)\right)$, for any constant $\delta$ such that for $x_{1},x_{2}\in \mathbb{T}^{N}$, $\left|x_{1}-x_{2}\right| \ls \delta^{\frac{1}{\ell}}$, we have
\begin{align}
\sup\limits_{s\in[0,t]}\frac{\left|\nabla a_{(n)}\left(t,x_{1}\right)-\nabla a_{(n)}\left(t,x_{2}\right)\right|}{\left|x_{1}-x_{2}\right|^{\ell}} \ls  C .
\end{align}
Moreover, the anti-derivative holds
\begin{align}
 & \left|\int_{x_{0}}^{x_{0}+\delta}\left(\nabla\frac{\mu}{\rho_{(n)}}\nabla \v_{(n-1)}\right)(s)\d x\right| \ls \int_{x_{0}}^{x_{0}+\delta}\left|\nabla\frac{\mu}{\rho_{(n)}}\right|(s) \left|\nabla \v_{(n-1)}\right|(s)\d x.
\end{align}
By the mean value theorem, since $\left|\nabla\frac{\mu}{\rho_{(n)}}\right|$ is continuous with respect to $x$, we get
\begin{align}
& \int_{x_{0}}^{x_{0}+\delta}\left|\nabla\frac{\mu}{\rho_{(n)}}\right| (s) \left|\nabla \v_{(n-1)}\right|(s)\d x\notag \\
= &  \left|\nabla\frac{\mu}{\rho_{(n)}}\left(s,\xi\right)\right|\int_{x_{0}}^{x_{0}+\delta}\left|\nabla \v_{(n-1)}\right|\d x\notag \\
= &  \left|\nabla\frac{\mu}{\rho_{(n)}}\left(s,\xi\right)\right|\left( \left( \v_{(n-1)}\left(s, x_{0}+\delta\right)-\v_{(n-1)}\left(s,x_{0}\right)\right)I_{\nabla \v_{(n-1)}(s)\geqslant 0}\right)\\
&-\left|\nabla\frac{\mu}{\rho_{(n)}}\left(s,\xi\right)\right|\left( \left( \v_{(n-1)}\left(s, x_{0}+\delta\right)-\v_{(n-1)}\left(s,x_{0}\right)\right)I_{\nabla \v_{(n-1)}(s)\ls 0}\right) \notag\\
=& \left|\nabla\frac{\mu}{\rho_{(n)}}\left(s,\xi\right)\right|\left( \v_{(n-1)}\left(s, x_{0}+\delta\right)I_{\nabla \v_{(n-1)}(s)\geqslant 0}- \v_{(n-1)}\left(s, x_{0}+\delta\right)I_{\nabla \v_{(n-1)}(s)\ls 0}\right)\notag\\
 &-\left|\nabla\frac{\mu}{\rho_{(n)}}\left(s,\xi\right)\right|\left( \v_{(n-1)}\left(s,x_{0}\right)I_{\nabla \v_{(n-1)}(s)\geqslant 0}-\v_{(n-1)}\left(s,x_{0}\right)I_{\nabla \v_{(n-1)}(s)\ls 0}\right) \notag\\
\triangleq &G\left(s,x_{0}+\delta\right)-G\left(s,x_{0}\right) \notag,
\end{align}
where $\xi\in [x_{0}, x_{0}+\delta]$.
On the other hand, it holds
\begin{align}\label{div inverse analysis1}
 &G\left(s,x_{0}\right)\notag\\
\ls & \left|\nabla\frac{\mu}{\rho_{(n)}}\left(s,\xi\right)-\nabla\frac{\mu}{\rho_{(n)}}\left(s, x_{0}\right)\right|\notag\\
&\qquad\qquad  \left( \v_{(n-1)}\left(s, x_{0}\right)I_{\nabla \v_{(n-1)}(s)\geqslant 0}- \v_{(n-1)}\left(s, x_{0}\right)I_{\nabla \v_{(n-1)}(s)\ls 0}\right)\\
  &+\left|\nabla\frac{\mu}{\rho_{(n)}}\left(s, x_{0}\right)\right|\left(\v_{(n-1)}\left(s, x_{0}\right) I_{\nabla \v_{(n-1)}(s)\geqslant 0}- \v_{(n-1)}\left(s, x_{0}\right)I_{\nabla \v_{(n-1)}(s)\ls 0}\right)\notag\\
  \ls & C\delta  \left|2\v_{(n-1)}\left(s, x_{0}\right)\right| + \left|\nabla\frac{\mu}{\rho_{(n)}}\left(s, x_{0}\right)\right| \left|2\v_{(n-1)}\left(s, x_{0}\right)\right|. \notag
\end{align}
Hence, by taking $\delta =\sup\limits_{s \in [0,t]} \frac{ \left|\nabla\frac{\mu}{\rho_{(n)}}\left(s, x_{0}\right)\right|}{C}$, we get
\begin{align}\label{div inverse analysis2}
 G\left(s,x_{0}\right)\ls 2 \sup\limits_{s \in [0,t]}\left|\nabla\frac{\mu}{\rho_{(n)}}\left(s, x_{0}\right)\right| \left|2\v_{(n-1)}\left(s, x_{0}\right)\right|.
\end{align}
Thus, we have
\begin{align}\label{div inverse analysis}
\left|\triangle^{-\frac{1}{2}} \left(\nabla\frac{\mu}{\rho_{(n)}}\cdot \nabla \v_{(n-1)}(s)\right)\right| \ls 4\sup\limits_{s \in [0,t]}\left|\nabla\frac{\mu}{\rho_{(n)}}\left(s, x \right)\right| \left| \v_{(n-1)}\left(s, x \right)\right|.
\end{align}
Therefore, we obtain the estimate for the second term in \eqref{nabla cdot (mu rho triangle u) in Q esti}
\begin{align}
  & \int_{0}^{t} \left| S(t-s)\nabla \triangle^{-\frac{1}{2}}\left(\nabla \frac{\mu}{\rho_{(n)}}\nabla\v_{(n-1)}\right)\right|_{p}\d s  \notag\\
\ls & \int_{0}^{t} \frac{M_{1}}{(t-s)^{\frac{1}{2}}} \left| S(t-s) \triangle^{-\frac{1}{2}}\left(\nabla \frac{\mu}{\rho_{(n)}}\nabla\v_{(n-1)}\right)\right|_{p}\d s   \\
\ls &  \int_{0}^{t} \frac{M}{(t-s)^{\frac{1}{2}+\frac{N}{2p}}}\left|\triangle^{-\frac{1}{2}}\left(\nabla \frac{\mu}{\rho_{(n)}}\nabla\v_{(n-1)}\right)\right|_{\frac{p}{2}}\d s \notag\\
%\ls & t^{\frac{1}{2}\left(1-\frac{N}{p}\right)}4M\sup\limits_{s \in [0,t]}\left|\left(\nabla\frac{\mu}{\rho_{(n)}}\left(s, x \right)\right) \left| \v_{(n-1)}\left(s, x \right)\right|\right|_{\frac{p}{2}} \notag\\
\ls & t^{\frac{1}{2}\left(1-\frac{N}{p}\right)}\frac{8\mu M}{\bar{\rho}}\sup\limits_{s \in [0,t]}\left|\left(\nabla a_{(n)}\right)\right|_{p} \left| \v_{(n-1)}\left(s, x \right)\right|_{p}. \notag %\ls t^{\frac{1}{2}\left(1-\frac{N}{p}\right)}\frac{8\mu M^{2}}{\bar{\rho}}K_{0}(t)^{2}.\notag
\end{align}
In summary, we have
\begin{align}
  & \int_{0}^{t} \left| S(t-s)\nabla \triangle^{-\frac{1}{2}}\left( \frac{\mu}{\rho_{(n)}}\triangle \v_{(n-1)}\right)\right|_{p}\d s \\
 \ls &t^{\frac{1}{2}\left(1-\frac{N}{p}\right)}\frac{4\mu M}{\bar{\rho}}\sup\limits_{s \in [0,t]}\left|\left( a_{(n)}\right)\right|_{p} \left| \nabla \v_{(n-1)}\right|_{p}
 +t^{\frac{1}{2}\left(1-\frac{N}{p}\right)}\frac{8\mu M}{\bar{\rho}}\sup\limits_{s \in [0,t]}\left|\left(\nabla a_{(n)}\right)\right|_{ p} \left| \v_{(n-1)}\right|_{p}. \notag
\end{align}

\noindent\underline{\textbf{Combined with the estimates of bilinear term \eqref{esti for S(t-s)B(u u)} and the pressure \eqref{pressure term is estimated by}}}, \\
it holds
\begin{align}
&\left| \left(\v_{(n)}\right)(t)\right|_{p}\notag\\
\ls &M_{1}\left|\u_{0}\right|_{p}+ t^{\frac{1}{2}\left(1-\frac{N}{p}\right)} 2M \left(\sup\limits_{s\in [0,t]}\left(\left| \v_{(n-1)}(s)\right|_{p}+\left| \z_{(n-1)}(s)\right|_{p}\right)\right)^{2}\\
&+t^{\frac{1}{2}\left(1-\frac{N}{p}\right)}\frac{4\mu M}{\bar{\rho}}\sup\limits_{s \in [0,t]}\left|\left( a_{(n)}\right)\right|_{p} \left| \nabla \v_{(n-1)}\right|_{p}
+t^{\frac{1}{2}\left(1-\frac{N}{p}\right)}\frac{8\mu M}{\bar{\rho}}\sup\limits_{s \in [0,t]}\left|\nabla \left( a_{(n)}\right)\right|_{p} \left| \v_{(n-1)}\right|_{p}.\notag
 \end{align}
Similarly, it holds the higher order estimates
\begin{align}
& \left|\left(\nabla \v_{(n)}\right)(t)\right|_{p} \notag \\
\ls & M_{1}\left|\nabla \u_{0}\right|_{p} \notag \\
 &+ t^{\frac{1}{2}\left(1-\frac{N}{p}\right)} 2 M \left(2\sup\limits_{s\in [0,t]}\left|\nabla \u_{(n-1)}(s)\right|_{p}^{2} + 2\sup\limits_{s\in [0,t]} \left|\nabla \cdot \u_{(n-1)}(s)\right|_{p}\sup\limits_{s\in [0,t]} \left|\u_{(n-1)}(s)\right|_{p}\right) \\
&+t^{\frac{1}{2}\left(1-\frac{N}{p}\right)}\frac{4\mu M}{\bar{\rho}}\sup\limits_{s \in [0,t]}\left|\left( a_{(n)}\right)\right|_{p} \left| \nabla \cdot \nabla \v_{(n-1)}\right|_{p} \notag\\
&+t^{\frac{1}{2}\left(1-\frac{N}{p}\right)}\frac{12\mu M}{\bar{\rho}}\sup\limits_{s \in [0,t]}\left|\nabla\left( a_{(n)}\right)\right|_{p} \left|\nabla \v_{(n-1)}\right|_{p}
+t^{\frac{1}{2}\left(1-\frac{N}{p}\right)}\frac{8\mu M}{\bar{\rho}}\sup\limits_{s \in [0,t]}\left|\nabla \cdot \nabla a_{(n)}\right|_{p} \left| \v_{(n-1)}\right|_{p}.  \notag
\end{align}

For the $W^{2,p}$-norm,
from the expression of the mild solutions of \eqref{mild solutions u n}, there holds
\begin{align}
\left(\nabla \cdot \nabla \u_{(n)}\right)=&\int_{0}^{t} \nabla \cdot \nabla  S(t-s) B\left(\u_{(n-1)},\u_{(n-1)}\right)\d s  \\
&- \int_{0}^{t} \nabla \cdot \nabla  S(t-s)  \nabla Q\left(a_{(n)},\u_{(n-1)}\right) \d s + \nabla \cdot \nabla  S(t-s)\u_{0}.\notag
\end{align}
 Similarly we have
 \begin{equation}
 \left|S(t-s)\nabla \cdot \nabla  \u_{0}\right|_{p} \ls M_{1}\left|\nabla \cdot \nabla  \u_{0}\right|_{p} \ls  M K_{0}.\\
\end{equation}
By \eqref{time decay of fractional derivative of semigroup}, we estimate the bilinear term as
\begin{align}\label{2 order esti for S(t-s)B(u u)}
& \left|\int_{0}^{t} S(t-s)\nabla \cdot \nabla  B\left(\u_{(n-1)} ,\u_{(n-1)} \right)\d s\right|_{p} \notag\\
= &  \left|\int_{0}^{t} S(t-s) \nabla \cdot \nabla  \left(-\nabla \cdot \left(\u_{(n-1)}\otimes \u_{(n-1)}\right) \right)\d s\right|_{p} \notag\\
= &  \left|\int_{0}^{t} \triangle^{\frac{1}{2}}S(t-s) \triangle^{-\frac{1}{2}}\left(-\nabla \cdot \nabla \nabla \cdot \left(\u_{(n-1)}\otimes \u_{(n-1)}\right)\right)\d s\right|_{p} \notag\\
\ls &  \int_{0}^{t} \left|\triangle^{\frac{1}{2}} S(t-s) \nabla \nabla \cdot \left(\u_{(n-1)}\otimes \u_{(n-1)}\right)\right|_{p} \d s \notag\\
\ls &  \int_{0}^{t} \frac{M_{1}}{(t-s)^{\frac{1}{2}}}\left| S(t-s)\nabla  \nabla \cdot \left(\u_{(n-1)}\otimes \u_{(n-1)}\right)\right|_{p}\d s \\
\ls &  \int_{0}^{t} \frac{M_{1}M_{2}}{(t-s)^{\frac{1}{2}+\frac{N}{2p}}} \left|\nabla \nabla \cdot \left(\u_{(n-1)}\otimes \u_{(n-1)}\right)\right|_{\frac{p}{2}}\d s \notag\\
%\ls &  \left(\int_{0}^{t} \frac{M_{1}M_{2}}{(t-s)^{\frac{1}{2}+\frac{N}{2p}}} \d s\right) \sup\limits_{s\in [0,t]} \left|\nabla \nabla \cdot \left(\u_{(n-1)}\otimes \u_{(n-1)}\right)\right|_{\frac{p}{2}} \notag\\
\ls & t^{\frac{1}{2}\left(1-\frac{N}{p}\right)}M_{1}M_{2} \sup\limits_{s\in [0,t]}  \left|\nabla \nabla \cdot \left(\u_{(n-1)}\otimes \u_{(n-1)}\right)\right|_{\frac{p}{2}} \notag\\
\ls & t^{\frac{1}{2}\left(1-\frac{N}{p}\right)}M \sup\limits_{s\in [0,t]}\left(2\left|\nabla \nabla\cdot  \u_{(n-1)}(s)\right|_{p}\left|\u_{(n-1)}(s)\right|_{p}+2\left|\u_{(n-1)}(s)\right|_{1,p}^{2}\right),\notag
\end{align}
where the condition $p>N$ and $M=\max\left\{M_{1},M_{2},M_{1}M_{2},2\right\} $.
%\begin{align}
%&\sup\limits_{s\in [0,t]}\left| \u_{(n-1)}(s)\right|_{p}\\
%\ls &\sup\limits_{s\in [0,t]}\left(\left| \v_{(n-1)}(s)\right|_{p}+\left| \z_{(n-1)}(s)\right|_{p}\right)
%%\ls & 2^{2r-1} \mathbb{E}\left[\sup\limits_{s\in (0,t)}\left(\left| \v_{(n-1)}(s)\right|_{p}^{2r}+\left| \z_{(n-1)}(s)\right|_{p}^{2r}\right)\right]\notag\\
%\ls 2M K_{0}(t)+K_{0}(t). \notag
%\end{align}
The pressure term satisfying \eqref{equation of nabla Q}, is estimated by
\begin{align}
    & \left| \int_{0}^{t} S(t-s) \nabla \cdot \nabla\nabla Q\left(a_{(n)},\u_{(n-1)}\right) \d s \right|_{p} \notag \\
\ls &  \left|  \int_{0}^{t}S(t-s)\left(-\nabla \cdot \nabla \nabla \triangle^{-1}\left( \nabla \cdot \left( \u_{(n-1)} \cdot \nabla \u_{(n-1)} \right)\d s -\left(\frac{\mu\nabla \rho_{(n)}}{\rho_{(n)}^{2}}\triangle \u_{(n-1)}\right)\right)\d s\right) \right|_{p}\notag \\
\ls &  \left|\int_{0}^{t} S(t-s)\nabla \cdot \nabla \nabla \triangle^{-1} \nabla \cdot \left( \u_{(n-1)} \cdot \nabla \u_{(n-1)} \right)\d s\right|_{p} \\
    &+\left|\int_{0}^{t}  S(t-s)\nabla \cdot \nabla  \nabla  \triangle^{-1} \nabla\cdot\left(\frac{\mu}{\rho_{(n)}}\triangle \v_{(n-1)}\right)\d s \right|_{p} , \notag
\end{align}
For $\int_{0}^{t} \left| S(t-s)\nabla \cdot \nabla \nabla \triangle^{-1} \nabla \cdot \left( \u_{(n-1)} \cdot \nabla \u_{(n-1)} \right)\right|_{p}\d s$, % since  $\left( \u_{(n-1)} \cdot \nabla \u_{(n-1)} \right)=\nabla \left|\u_{(n-1)}\right|^{2}$,
from \eqref{2 order esti for S(t-s)B(u u)} we have
\begin{align}
&\int_{0}^{t} \left| S(t-s)\nabla \cdot \nabla \nabla \triangle^{-1} \nabla \cdot \left( \u_{(n-1)} \cdot \nabla \u_{(n-1)} \right)\right|_{p}\d s \notag \\
%=&\int_{0}^{t}\left| S(t-s)\nabla \cdot \nabla \nabla \left|\u_{(n-1)}\right|^{2}\right|_{p}\d s
=& \left|\int_{0}^{t} S(t-s)\nabla \cdot \nabla B\left(\u_{(n-1)},\u_{(n-1)}\right)\d s\right|_{p}.% \\
%\ls & t^{\frac{1}{2}\left(1-\frac{N}{p}\right)}M \left(2M K_{0}\right)^{2}. \notag
\end{align}
For
\begin{align}
&\left|\int_{0}^{t} S(t-s) \nabla \cdot \nabla  \nabla  \triangle^{-1} \nabla\cdot\left(\frac{\mu}{\rho_{(n)}}\triangle \v_{(n-1)}\right)\d s \right|_{p}\\
=&\left|\int_{0}^{t}  S(t-s)\nabla \cdot \nabla \nabla  \triangle^{-\frac{1}{2}} \left(\frac{\mu}{\rho_{(n)}}\triangle \v_{(n-1)}\right)\d s \right|_{p},\notag
\end{align}
 due to $\frac{\mu}{\rho_{(n)}}\triangle \v_{(n-1)}=\nabla \cdot \left(\frac{\mu}{\rho_{(n)}}\nabla\v_{(n-1)}\right)-\nabla \frac{\mu}{\rho_{(n)}}\nabla\v_{(n-1)}$, there holds
\begin{align}\label{2-order nabla cdot (mu rho triangle u)}
&\left|\int_{0}^{t}  S(t-s) \nabla \cdot \nabla  \nabla  \triangle^{-1} \nabla\cdot\left(\frac{\mu}{\rho_{(n)}}\triangle \v_{(n-1)}\right)\d s \right|_{p} \notag \\
\ls & \int_{0}^{t} \left| S(t-s) \nabla \cdot \nabla  \nabla  \left(\frac{\mu}{\rho_{(n)}}\sum_{i}\sum_{j}\left(\nabla \v_{(n-1)}\right)_{i,j}\right)\right|_{p}\d s\\
 &+ \int_{0}^{t} \left| S(t-s) \nabla \cdot \nabla \nabla \triangle^{-\frac{1}{2}} \left(\nabla \frac{\mu}{\rho_{(n)}}\nabla\v_{(n-1)}\right)\right|_{p}\d s.\notag
\end{align}
The first term in \eqref{2-order nabla cdot (mu rho triangle u)} is estimated as
\begin{align}
& \int_{0}^{t} \left| S(t-s)\nabla \cdot \nabla \nabla \left(\frac{\mu}{\rho_{(n)}}\nabla\v_{(n-1)}\right)\right|_{p}\d s\notag\\
\ls & \int_{0}^{t} \frac{M_{1}}{(t-s)^{\frac{1}{2}}} \left|S(t-s)\nabla \cdot \nabla \left(\frac{\mu}{\rho_{(n)}}\sum_{i}\sum_{j}\left(\nabla \v_{(n-1)}\right)_{i,j}\right)\right|_{p}\d s \\
\ls & \int_{0}^{t} \frac{M}{(t-s)^{\frac{1}{2}+\frac{N}{2p}}} \left|\nabla \cdot \nabla \left(\frac{\mu}{\rho_{(n)}}\sum_{i}\sum_{j}\left(\nabla \v_{(n-1)}\right)_{i,j}\right)\right|_{\frac{p}{2}}\d s \notag\\
\ls & \frac{4\mu M}{\bar{\rho}} t^{\frac{1}{2}\left(1-\frac{N}{p}\right)}
\left(\sup\limits_{s\in[0,t]}\left|\nabla \cdot \nabla a_{(n)}\right|_{p}\sup\limits_{s\in[0,t]}\left|\nabla\v_{(n-1)}\right|_{p}
+\sup\limits_{s\in[0,t]}\left|\nabla a_{(n)}\right|_{p}\sup\limits_{s\in[0,t]}\left|\nabla\cdot \nabla \v_{(n-1)}\right|_{2,p}\right).\notag%\\
%\ls & t^{\frac{1}{2}\left(1-\frac{N}{p}\right)}\frac{8\mu M^{2}}{\bar{\rho}}K_{0}^{2}, \notag
\end{align}
%due to  $\odiv \v_{(n-1)} =0$. % $\left|a_{(n)}\right|_{2,p}\ls \left|a_{(n)}(0)\right|_{2,p}+t\left|\v_{(n-1)}\right|_{2,p}$ and}
%\begin{align}
% \int_{0}^{t} \left\| S(t-s)\nabla \frac{\mu}{\rho_{(n)}}\nabla\v_{(n-1)}\right\|_{p}\d s =  \int_{0}^{t} \left\| S(t-s)\triangle^{\frac{1}{2}}\triangle^{-\frac{1}{2}}\left( \nabla \frac{\mu}{\rho_{(n)}}\nabla\v_{(n-1)}\right)\right\|_{p}\d s,
%\end{align}
To estimate the second term in \eqref{2-order nabla cdot (mu rho triangle u)},
 we have
\begin{align}
  & \int_{0}^{t} \left| S(t-s)\nabla \cdot \nabla\nabla \triangle^{-\frac{1}{2}}\left(\nabla \frac{\mu}{\rho_{(n)}}\nabla\v_{(n-1)}\right)\right|_{p}\d s  \notag\\
\ls & \int_{0}^{t} \frac{M_{1}}{(t-s)^{\frac{1}{2}}} \left| S(t-s) \nabla \cdot \nabla\triangle^{-\frac{1}{2}}\left(\nabla \frac{\mu}{\rho_{(n)}}\nabla\v_{(n-1)}\right)\right|_{p}\d s  \\
\ls &  \int_{0}^{t} \frac{M}{(t-s)^{\frac{1}{2}+\frac{N}{2p}}}\left|\nabla \cdot \nabla\triangle^{-\frac{1}{2}}\left(\nabla \frac{\mu}{\rho_{(n)}}\nabla\v_{(n-1)}\right)\right|_{\frac{p}{2}}\d s \notag \\
\ls & \frac{4\mu M}{\bar{\rho}}t^{\frac{1}{2}\left(1-\frac{N}{p}\right)}
\left(\sup\limits_{s\in[0,t]}\left|\nabla\cdot\nabla a_{(n)}\right|_{p}\sup\limits_{s\in[0,t]}\left|\nabla \v_{(n-1)}\right|_{p}
+\sup\limits_{s\in[0,t]}\left|\nabla a_{(n)}\right|_{p}\sup\limits_{s\in[0,t]}\left|\nabla\cdot\nabla \v_{(n-1)}\right|_{p}\right). \notag %\\
% \ls &t^{\frac{1}{2}\left(1-\frac{N}{p}\right)}\frac{8\mu M^{2}}{\bar{\rho}}K_{0}^{2}. \notag
\end{align}
In summary, it holds that
\begin{align}
&\left|\left(\nabla\cdot \nabla \v_{(n)}\right)(t)\right|_{ p}\notag\\
\ls & M_{1}\left|\nabla\cdot \nabla \u_{0}\right|_{p}+  t^{\frac{1}{2}\left(1-\frac{N}{p}\right)}M \sup\limits_{s\in [0,t]}\left(2\left|\nabla \nabla \cdot  \u_{(n-1)}(s)\right|_{p}\left|\u_{(n-1)}(s)\right|_{p}+2\left|\nabla \u_{(n-1)}(s)\right|_{p}^{2}\right) \\
& +\frac{8\mu M}{\bar{\rho}} t^{\frac{1}{2}\left(1-\frac{N}{p}\right)}
\left(\sup\limits_{s\in[0,t]}\left|\nabla \cdot \nabla a_{(n)}\right|_{p}\sup\limits_{s\in[0,t]}\left|\nabla \v_{(n-1)}\right|_{p}
+\sup\limits_{s\in[0,t]}\left|\nabla a_{(n)}\right|_{p}\sup\limits_{s\in[0,t]}\left|\nabla \cdot \nabla \v_{(n-1)}\right|_{p}\right). \notag%\\
%\ls &M K_{0}\left(1+t^{\frac{1}{2}\left(1-\frac{N}{p}\right)}\left(8M^{3} K_{0}^{2} + \frac{12\mu M^{2}}{\bar{\rho}}K_{0}^{2}\right)\right). \notag
 \end{align}

We combine the above estimates, and we have
\begin{align}
&\left|\left( \v_{(n)}\right)(t)\right|_{2,p}\notag\\
\ls & M_{1}\left|\u_{0}\right|_{2,p} + t^{\frac{1}{2}\left(1-\frac{N}{p}\right)}2M \left(\sup\limits_{s\in [0,t]}\left| \u_{(j-1)}(s)\right|_{p}\right)^{2}\notag \\
&+ t^{\frac{1}{2}\left(1-\frac{N}{p}\right)} 2 M \left(2\sup\limits_{s\in [0,t]}\left|\nabla \u_{(n-1)}(s)\right|_{p}^{2} + 2\sup\limits_{s\in [0,t]} \left|\nabla \cdot \u_{(n-1)}(s)\right|_{p}\sup\limits_{s\in [0,t]} \left|\u_{(n-1)}(s)\right|_{p}\right) \notag \\
&+  t^{\frac{1}{2}\left(1-\frac{N}{p}\right)}2M \sup\limits_{s\in [0,t]}\left(2\left|\nabla\nabla\cdot \u_{(n-1)}(s)\right|_{p}\left|\u_{(n-1)}(s)\right|_{p}+2\left|\nabla \u_{(n-1)}(s)\right|_{p}^{2}\right) \notag\\
&+t^{\frac{1}{2}\left(1-\frac{N}{p}\right)}\frac{4\mu M}{\bar{\rho}}\sup\limits_{s \in [0,t]}\left| a_{(n)}\right|_{p} \left| \nabla \v_{(n-1)}\right|_{p}
+t^{\frac{1}{2}\left(1-\frac{N}{p}\right)}\frac{8\mu M}{\bar{\rho}}\sup\limits_{s \in [0,t]}\left|\nabla a_{(n)}\right|_{p} \left| \v_{(n-1)}\right|_{p} \\ &+t^{\frac{1}{2}\left(1-\frac{N}{p}\right)}\frac{4\mu M}{\bar{\rho}}\sup\limits_{s \in [0,t]}\left| a_{(n)}\right|_{p} \left| \nabla \cdot \nabla \v_{(n-1)}\right|_{p} \notag\\
&+t^{\frac{1}{2}\left(1-\frac{N}{p}\right)}\frac{12\mu M}{\bar{\rho}}\sup\limits_{s \in [0,t]}\left|\nabla a_{(n)}\right|_{p} \left|\nabla \v_{(n-1)}\right|_{p}
+t^{\frac{1}{2}\left(1-\frac{N}{p}\right)}\frac{8\mu M}{\bar{\rho}}\sup\limits_{s \in [0,t]}\left|\nabla \cdot \nabla a_{(n)}\right|_{p} \left| \v_{(n-1)}\right|_{p}\notag\\
&+ t^{\frac{1}{2}\left(1-\frac{N}{p}\right)} \frac{16\mu M}{\bar{\rho}}
\left(\sup\limits_{s\in[0,t]}\left|\nabla\cdot \nabla a_{(n)}\right|_{p}\sup\limits_{s\in[0,t]}\left|\nabla \v_{(n-1)}\right|_{p}
+\sup\limits_{s\in[0,t]}\left|\nabla a_{(n)}\right|_{p}\sup\limits_{s\in[0,t]}\left|\nabla \cdot \nabla \v_{(n-1)}\right|_{p}\right)\notag\\
\ls &M_{1}\left|\u_{0}\right|_{2,p}+  t^{\frac{1}{2}\left(1-\frac{N}{p}\right)}2M \left(4\sup\limits_{s\in [0,t]}\left| \u_{(j-1)}(s)\right|_{2,p}\right)^{2} \notag\\
% &+t^{\frac{1}{2}\left(1-\frac{N}{p}\right)}\frac{4\mu M}{\bar{\rho}}\sup\limits_{s \in [0,t]}\left| a_{(n)}\right|_{p} \left|\v_{(n-1)}\right|_{2,p}+ +t^{\frac{1}{2}\left(1-\frac{N}{p}\right)}\frac{8\mu M}{\bar{\rho}}\sup\limits_{s \in [0,t]}\left| a_{(n)}\right|_{2,p} \left| \v_{(n-1)}\right|_{p}\notag\\
&+ t^{\frac{1}{2}\left(1-\frac{N}{p}\right)} \frac{16\mu M}{\bar{\rho}}
\left(\sup\limits_{s\in[0,t]}\left| a_{(n)}\right|_{2, p}\sup\limits_{s\in[0,t]}\left| \v_{(n-1)}\right|_{1,p}
+\sup\limits_{s\in[0,t]}\left| a_{(n)}\right|_{1,p}\sup\limits_{s\in[0,t]}\left| \v_{(n-1)}\right|_{2,p}\right). \notag %\\
%\ls &M K_{0}\left(1+t^{\frac{1}{2}\left(1-\frac{N}{p}\right)}\left(8M^{3} K_{0}^{2} + \frac{12\mu M^{2}}{\bar{\rho}}K_{0}^{2}\right)\right). \notag
 \end{align}
%\begin{align}
%&\mathbb{E}\left[\left|S(t) \left(\v_{(j)}\right)(t)\right|_{p}^{r} \right]\notag\\
%\ls &  \left(\frac{3}{2}M K_{0}(t) + M \left(2M K_{0}(t)+K_{0}(t)\right)^{2} t^{\frac{1}{2}\left(1-\frac{N}{p}\right)}+ \frac{16\mu M^{2}}{\bar{\rho}}K_{0}(t)^{2} t^{\frac{1}{2}\left(1-\frac{N}{p}\right)}\right)^{r} \\
%\ls & \left( M K_{0}(t)\left(\frac{3}{2}+t^{\frac{1}{2}\left(1-\frac{N}{p}\right)}\left(\left(2M+1\right)^{2} K_{0}(t)+\frac{16\mu M}{\bar{\rho}}K_{0}(t)\right)\right)\right)^{r}.\notag
% \end{align}
For $N<p\ls 6$, by Sobolev's embedding, there holds
\begin{align}
\left|\z_{(n-1)}(s)\right|_{2,p} \ls \left\|\z_{(n-1)}(s)\right\|_{3}.
 \end{align}
Hence, by \eqref{onto map of a sto}, we have
\begin{align}
&\left|\left(\v_{(n)}\right)(t)\right|_{2, p}\notag\\
\ls &  M_{1}K_{0}+  t^{\frac{1}{2}\left(1-\frac{N}{p}\right)} 8 M \sup\limits_{s\in [0,t]}\left(\left| \v_{(n-1)}(s)\right|_{2,p}+\left\|\z_{(n-1)}(s)\right\|_{3}\right)^{2} \\
& +\frac{16\mu M}{\bar{\rho}} t^{\frac{1}{2}\left(1-\frac{N}{p}\right)}
\sup\limits_{s\in[0,t]}\left(\left| a_{0}(x)\right|_{2,p} + t \left(\left|\v_{(n-1)}(s,x)\right|_{2,p} + \left\|\z_{(n-1)}\left(s,x\right)\right\|_{3}\right)\right)\left|\v_{(n-1)}\right|_{1,p}\notag\\
&+\frac{16\mu M}{\bar{\rho}} t^{\frac{1}{2}\left(1-\frac{N}{p}\right)}\sup\limits_{s\in[0,t]}\left(\left| a_{0}(x)\right|_{1,p} + t \left(\left|\v_{(n-1)}(s,x)\right|_{1,p} +\left\|\z_{(n-1)}\left(s,x\right)\right\|_{3}\right)\right)\left|\v_{(n-1)}\right|_{2,p}.\notag%\\
%\ls &M K_{0}\left(1+t^{\frac{1}{2}\left(1-\frac{N}{p}\right)}\left(8M^{3} K_{0}^{2} + \frac{12\mu M^{2}}{\bar{\rho}}K_{0}^{2}\right)\right). \notag
\end{align}
 By Theorem \ref{Lp for z}, if $\left| \v_{(j-1)}(s)\right|_{2,p}\ls 2M K_{0}$, then it holds that
\begin{align}
&\left(\left| \v_{(n-1)}(s)\right|_{2,p}+\left\|\z_{(n-1)}(s)\right\|_{3}\right)
%\ls \sup\limits_{s\in [0,t]}\left(\left| \v_{(j-1)}(s)\right|_{p}+\left| \z_{(j-1)}(s)\right|_{p}\right)
%\ls & 2^{2r-1} \mathbb{E}\left[\sup\limits_{s\in (0,t)}\left(\left| \v_{(j-1)}(s)\right|_{p}^{2r}+\left| \z_{(j-1)}(s)\right|_{p}^{2r}\right)\right]\notag\\
\ls 2M K_{0}+K_{0}t^{\frac{1}{2}}.
\end{align}
Therefore, % if
%\begin{equation}
%\left|\v_{(n-1)}\right|_{2, p}\ls 2M K_{0}, %\quad \bar{\mathbb{P}}\quad  {\rm a.s.},
%\end{equation}
 it holds that
\begin{align}\label{time continuity of v}
&\left|\left(\v_{(n)}\right)(t)\right|_{2, p}\notag\\
\ls & M_{1}\left|\u_{0}\right|_{2,p} + t^{\frac{1}{2}\left(1-\frac{N}{p}\right)}8 M \sup\limits_{s\in [0,t]}\left(\left| \v_{(n-1)}(s)\right|_{2,p}+K_{0}t^{\frac{1}{2}}\right)^{2}\notag\\
% &+t^{\frac{1}{2}\left(1-\frac{N}{p}\right)}4M\left(\sup\limits_{s\in [0,t]}\left|\v_{(n-1)}(s)\right|_{1,p}+C_{6}t^{\frac{1}{2}}\right)^{2}\\
& +\frac{ 16 \mu M}{\bar{\rho}} t^{\frac{1}{2}\left(1-\frac{N}{p}\right)}
\sup\limits_{s\in[0,t]}\left(\left| a_{0}(x) \right|_{2,p} + t \left(\left|\v_{(n-1)}(s,x)\right|_{2,p} +K_{0}t^{\frac{1}{2}}\right)\right)\left|\v_{(n-1)}\right|_{1,p} \\
&+\frac{ 16 \mu M}{\bar{\rho}} t^{\frac{1}{2}\left(1-\frac{N}{p}\right)} \sup\limits_{s\in[0,t]} \left(\left| a_{0}(x)\right|_{1,p} + t \left(\left|\v_{(n-1)}(s,x)\right|_{1,p} +K_{0}t^{\frac{1}{2}}\right)\right)\left|\v_{(n-1)}\right|_{2,p} \notag \\
\ls &M K_{0}\left(1+t^{\frac{1}{2}\left(1-\frac{N}{p}\right)}\left(32M^{2} K_{0}+\frac{64\mu M K_{0}}{\bar{\rho}}+8t + \frac{128\mu M K_{0}}{\bar{\rho}}t
+32K_{0}t^{\frac{1}{2}} + \frac{64\mu M}{\bar{\rho}}t^{\frac{3}{2}}\right)\right). \notag
\end{align}
Due to the fact that $K_{0}(t)$ is increasing with respect to $t$, there exists $T_{1}$, $T_{1}\ls T_{0}$ such that
 \begin{align}
 T_{1}^{\frac{1}{2}\left(1-\frac{N}{p}\right)}\left(32M^{2} K_{0}+\frac{64\mu M K_{0}}{\bar{\rho}}+8 T_{1} + \frac{128\mu M K_{0}}{\bar{\rho}} T_{1}
+32K_{0}T_{1}^{\frac{1}{2}} + \frac{64\mu M}{\bar{\rho}}T_{1}^{\frac{3}{2}}\right)\ls 1,
% T_{1}^{\frac{1}{2}\left(1-\frac{N}{p}\right)}\left(\left(2M+1\right)^{2} K_{0}(T_{1})+\frac{16\mu M}{\bar{\rho}}K_{0}(T_{1})\right) \ls \frac{1}{2}, \\
%\text{ i.e., }T_{1}\ls \left(\left(\left(2M+1\right)^{2} K_{0}(T_{1})+\left(\frac{1}{2}+9K_{0}(T_{1})\right)\right)\right)^{-\frac{2p}{p-N}},
 \end{align}
then we know that
\begin{equation}
\left|\v_{(n)}\right|_{p}\ls 2M K_{0}. %\quad \bar{\mathbb{P}}\quad  {\rm a.s.},
\end{equation}
\eqref{time continuity of v} also implies the continuity of solutions in time.
\hfill $\square$
\end{proof}
%Reviewing the proof in this subsection, we can also draw a conclusion that: \\
% if $\v_{(n-1)} \in L^{\infty}\left(0, T;  W^{2,p}\left(\mathbb{T}^{N}\right)\right)$, then so does $\v_{(n)}$.

%\begin{figure}[h]%h require the picture in current position
%  \centering

%  % Requires \usepackage{graphicx}
%  \includegraphics[height=12cm,width=16.5cm]{iteration.eps}\\
%  \caption{iteration procedure}\label{iteration procedure}
%\end{figure}

%TeXCAD Picture [Iteration procedure.pic]. Options:
%\grade{\on}
%\emlines{\off}
%\epic{\off}
%\beziermacro{\on}
%\reduce{\on}
%\snapping{\off}
%\quality{8.00}
%\graddiff{0.01}
%\snapasp{1}
%\zoom{4.0000}
In summary, the iteration procedure is also illustrated in the following figure.\\
\unitlength 0.5mm % = 2.85pt
\linethickness{0.4pt}
\ifx\plotpoint\undefined\newsavebox{\plotpoint}\fi % GNUPLOT compatibility
\begin{picture}(315.25,171)(61.25,148.5)\label{iteration procedure}
\tiny{%\put(93.75,319.5){$s=\left[\frac{N}{2}\right]+2,$}
\put(68,299){\framebox(60,9.25)[cc]
{$\mathbf{v}_{(0)}=\mathbf{u}_{(0)}\in W^{2,p}(\mathbb{T}^N)$}}
\put(83,285){\framebox(30,13.25)[cc]
{$ \mathbf{z}_{(0)}=0$}}
\put(131,305){\vector(1,0){38}}
\put(130,310.25){$a_{(0)}\in W^{2,p}(\mathbb{T}^N)$}
\put(173,300.5){\framebox(100,9.25)[cc]
{$a_{(1)}\in C\left([0, T]; W^{2,p}(\mathbb{T}^N)\cap C^{1,\ell}(\mathbb{T}^N)\right)$}}
\put(171.25,275.5){\framebox(77.75,9.25)[cc]
{$\mathbf{z}_{(1)}\in C\left([0, T]; W^{2,p}(\mathbb{T}^N)\right)$}}
\put(270,289){\framebox(90,9.25)[cc]
{$\mathbf{v}_{(1)}\in C\left([0, T]; W^{2,p}(\mathbb{T}^N)\right)$}}
\put(208,298.25){\vector(0,-1){11.5}}
\put(265,261.75){\framebox(105,9.25)[cc]
{$a_{(2)}\in C\left([0, T]; W^{2,p}(\mathbb{T}^N)\cap C^{1,\ell}(\mathbb{T}^{N})\right)$}}
\put(322,288){\vector(0,-1){14.75}}
\put(313,274.25){\vector(3,-1){.07}}\multiput(239.25,301)(.093001261,-.0337326608){793}{\line(1,0){.093001261}}
\put(264.25,288.75){\vector(2,1){.07}}\multiput(250,282)(.07089552,.03358209){201}{\line(1,0){.07089552}}
\put(322,288){\vector(0,-1){14.75}}
\put(313,274.25){\vector(3,-1){.07}}\multiput(239.25,301)(.093001261,-.0337326608){793}{\line(1,0){.093001261}}
\put(299,273.75){\vector(1,0){.07}}\multiput(252,277.25)(.45192308,-.03365385){104}{\line(1,0){.45192308}}
\put(264.25,288.75){\vector(2,1){.07}}\multiput(250,282)(.07089552,.03358209){201}{\line(1,0){.07089552}}
\put(285,302){\vector(4,-1){.07}}\multiput(273,304.25)(.1923077,-.0336538){52}{\line(1,0){.1923077}}
\put(89.75,249.25){$\cdots$,}
\put(60,231.25){\framebox(78,8.25)[cc]
{$\mathbf{z}_{(n-1)}\in C\left([0, T];W^{2,p}(\mathbb{T}^N)\right)$}}
\put(60,213.5){\framebox(78,8.25)[cc]
{$\mathbf{v}_{(n-1)}\in C\left([0, T];W^{2,p}(\mathbb{T}^N)\right)$}}
\put(267.5,231.75){\framebox(101,9.25)[cc]
{$a_{(n)}\in C\left([0, T];W^{2,p}(\mathbb{T}^N)\cap C^{1,\ell}(\mathbb{T}^N)\right)$}}
\put(289,207.25){\framebox(80,9.25)[cc]
{$\mathbf{z}_{(n)}\in C\left([0, T];W^{2,p}(\mathbb{T}^N)\right)$}}
\put(183.25,182.75){\framebox(90,9.25)[cc]
{$\mathbf{v}_{(n)}\in C\left([0, T]; W^{2,p} (\mathbb{T}^N)\right)$}}
\multiput(136.5,236)(.65979381,-.03350515){97}{\line(1,0){.65979381}}
\multiput(160,232.75)(-.046134663,-.033665835){401}{\line(-1,0){.046134663}}
\put(267.75,233){\vector(1,0){.07}}\multiput(200.75,232.5)(5.8,.033333){11}{\line(1,0){5.8}}
\put(324.25,230){\vector(0,-1){11.25}}
\put(259,194.75){\vector(-2,-1){.07}}\multiput(287.25,210.5)(-.060492505,-.03372591){467}{\line(-1,0){.060492505}}
\put(254.25,195.75){\vector(-1,-1){.07}}\multiput(288.5,229.75)(-.0339781746,-.0337301587){1008}{\line(-1,0){.0339781746}}
\put(155,238.5){$a_{(n-1)}\in C\left([0, T];W^{2,p}(\mathbb{T}^N)\cap C^{1,\ell}(\mathbb{T}^N)\right)$}}
\large{\put(160,170){Figure 1. Iteration procedure. }},
\end{picture}

\subsection{The contractions}

We use Banach's fixed point theorem to get the fixed point $\z, \v, a$.

First, we estimate $\left| a_{(n)}-a_{(n-1)}\right|_{p} $.
The linearized mass conservation equation is
\begin{equation}
 \partial_{t} a_{(n)}+\u_{(n-1)}\cdot \nabla a_{(n)}= 0.
\end{equation}
Hence, there holds the new transport equation,
\begin{align}\label{the new transport equation}
 &\partial_{t}\left( a_{(n)}-a_{(n-1)}\right)+\u_{(n-1)}\cdot \nabla\left( a_{(n)}-a_{(n-1)}\right)
 =-\left(\u_{(n-1)}-\u_{(n-2)}\right)\cdot \nabla a_{(n-1)}.
\end{align}
By the property of transport equations, denoting $\overline{\mathrm{D}}$ as material derivative, it holds that
\begin{align}
\overline{\mathrm{D} }\left( a_{(n)}-a_{(n-1)}\right) =-\left(\u_{(n-1)}-\u_{(n-2)}\right)\cdot \nabla a_{(n-1)}.
\end{align}
Let $\mathcal{C}$ be the characteristic line. By Minkowski's integral inequality and \eqref{the new transport equation}, we have
\begin{align}
&\left| a_{(n)}-a_{(n-1)}(s,x)\right|_{p}\notag\\
\ls & \left| \left(a_{(n)}-a_{(n-1)}\right)\left(0,x-\int_{0}^{s}\u_{(n-1)}(s')\d s'\right)\right|_{p}
 +\left| \int_{\mathcal{C}}\left(\u_{(n-1)}-\u_{(n-2)}\right)\cdot \nabla a_{(n-1)} \d \mathcal{C}\right|_{p} \notag \\
\ls & 0+ \int_{0}^{s}\left|\left(\u_{(n-1)}-\u_{(n-2)}\right)\cdot \nabla a_{(n-1)}\right|_{p}\left|\u_{(n-1)}\right|_{\infty} \d s' \\
\ls & \int_{0}^{s}\left|\u_{(n-1)}-\u_{(n-2)}\right|_{p} \left| \nabla a_{(n-1)}\right|_{\infty}\left|\u_{(n-1)}\right|_{\infty} \d s' \notag \\
\ls &  2MK_{0}^{2} s \sup\limits_{s' \in [0,s]}\left|\left(\u_{(n-1)}-\u_{(n-2)}\right)(s',x)\right|_{p}. \notag
\end{align}
Hence, we obtain
\begin{align}\label{contraction of a}
&\sup\limits_{s \in [0,t]}\left| a_{(n)}-a_{(n-1)}(s,x)\right|_{p}
\ls   2MK_{0}^{2}t\sup\limits_{s \in [0,t]}\left|\left(\u_{(n-1)}-\u_{(n-2)}\right)(s,x)\right|_{p}.
\end{align}

Now we prove the contraction of $\z_{(n)}$ in $C\left([0, T];  L^{p}\left(\mathbb{T}^{N}\right)\right)$ with the regularity of $\z$ established. We estimate $\left|\z_{(n)}-\z_{(n-1)}\right|_{p}$. We recall the iteration equations:
\begin{equation}
\d \z_{(n)}(t)- \frac{\mu}{\bar{\rho}\left(1+a_{(n)}\right)}\triangle\z_{(n)}(t)\d t=\Phi \d W(t),\\
\end{equation}
and
\begin{equation}
\d \z_{(n-1)}(t)-\frac{\mu}{\bar{\rho}\left(1+a_{(n-1)}\right)}\triangle\z_{(n-1)}(t)\d t=\Phi \d W(t).\\
\end{equation}
The subtraction of the second equation from the first yields the following:
\begin{equation}\label{equation of z for contraction}
\d \left( \z_{(n)}(t) -\z_{(n-1)}(t)\right)-\frac{\mu}{\bar{\rho}\left(1+a_{(n)}\right)}\triangle\z_{(n)}(t)\d t
+\frac{\mu}{\bar{\rho}\left(1+a_{(n-1)}\right)}\triangle\z_{(n-1)}(t)\d t=0.
\end{equation}
We rewrite the contraction equation as follows:
\begin{align}
& \d \left( \z_{(n)}(t) -\z_{(n-1)}(t)\right)-\frac{\mu}{\bar{\rho}\left(1+a_{(n)}\right)}\triangle\left(\z_{(n)}(t) - \z_{(n-1)}(t)\right)\d t\notag\\
&-\left(\frac{\mu}{\bar{\rho}\left(1+a_{(n)}\right)}-\frac{\mu}{\bar{\rho}\left(1+a_{(n-1)}\right)}\right)\triangle \z_{(n-1)}(t)\d t =0.
\end{align}
Referring to \eqref{equation of z for contraction}, by Minkowski's integral inequality, we have
\begin{align}
   &\left| \z_{(n)}(t) -\z_{(n-1)}(t)\right|_{p}\notag\\
\ls &\left| \int_{0}^{t} S_{(n)}(t-s)\frac{\mu\left(a_{(n)}-a_{(n-1)}\right)}{\bar{\rho}\left(1+a_{(n)}\right)\left(1+a_{(n-1)}\right)}\triangle\z_{(n-1)}(s)\d s \right|_{p}\notag \\
\ls & \int_{0}^{t}\left| S_{(n)}(t-s) \frac{\mu\left(a_{(n)}-a_{(n-1)}\right)}{\bar{\rho}\left(1+a_{(n)}\right)\left(1+a_{(n-1)}\right)}\triangle\z_{(n-1)}(s)\right|_{p}\d s \notag \\
\ls & \int_{0}^{t}\left| S_{(n)}(t-s) \frac{4\mu\left(a_{(n)}-a_{(n-1)}\right)}{\bar{\rho}}\triangle\z_{(n-1)}(s)\right|_{p}\d s  \\
= & \frac{4\mu}{\bar{\rho}}\int_{0}^{t}\left| S_{(n)}(t-s) \left(\nabla\cdot\left(\left(a_{(n)}-a_{(n-1)}\right) \nabla \z_{(n-1)}(s)\right)- \nabla\left(a_{(n)}-a_{(n-1)}\right)\nabla \z_{(n-1)}(t) \right)\right|_{p}\d s . \notag
\end{align}
For convenience, we denote
\begin{align}
\delta_{a,\z}=\left(\nabla\cdot\left(\left(a_{(n)}-a_{(n-1)}\right) \nabla \z_{(n-1)}(t)\right)- \nabla\left(a_{(n)}-a_{(n-1)}\right)\cdot \nabla \z_{(n-1)}(s) \right).
\end{align}
Then, we estimate
\begin{align}
   &\left| \z_{(n)}(t) -\z_{(n-1)}(t)\right|_{p}\notag\\
   = & \frac{4\mu}{\bar{\rho}}\int_{0}^{t}\left| S_{(n)}(t-s) \nabla\cdot \triangle^{-\frac{1}{2}} \delta_{a,\z}\right|_{p}\d s
\ls  \frac{4\mu}{\bar{\rho}}\int_{0}^{t}\frac{1}{(t-s)^{\frac{1}{2}}}\left| S_{(n)}(t-s)  \triangle^{-\frac{1}{2}} \delta_{a,\z}\right|_{p}\d s \notag\\
\ls &\frac{4\mu}{\bar{\rho}}\int_{0}^{t}\frac{M_{1}}{(t-s)^{\frac{1}{2}}}\left| S_{(n)}(t-s)\left(a_{(n)}-a_{(n-1)}\right) \nabla \z_{(n-1)}(s)\right|_{p}\d s \\
     &+ \frac{4\mu}{\bar{\rho}}\int_{0}^{t}\frac{M_{1}}{(t-s)^{\frac{1}{2}}}\left| S_{(n)}(t-s) \triangle^{-\frac{1}{2}} \left(\nabla\left(a_{(n)}-a_{(n-1)}\right)(s)\cdot \nabla \z_{(n-1)}\right)(s) \right|_{p}\d s. \notag
\end{align}
The first term in right-hand-side is estimated by
\begin{align}
 &\int_{0}^{t}\frac{1}{(t-s)^{\frac{1}{2}}}\left| S_{(n)}(t-s)\left(a_{(n)}-a_{(n-1)}\right) (s)\nabla \z_{(n-1)}(s)\right|_{p}\d s \notag\\
\ls &\int_{0}^{t}\frac{1}{(t-s)^{\frac{1}{2}+\frac{N}{2p}}}\left|\left(a_{(n)}-a_{(n-1)}\right)(s) \nabla \z_{(n-1)}(s)\right|_{\frac{p}{2}}\d s \\
\ls &t^{\frac{1}{2}\left(1-\frac{N}{p}\right)}\sup_{s \in [0,t]}\left|\nabla \z_{(n-1)}(t)\right|_{p}\sup_{s \in [0,t]}\left|\left(a_{(n)}-a_{(n-1)}\right)(s,x)\right|_{p} \notag \\
\ls & t^{\frac{1}{2}\left(1-\frac{N}{p}\right)} K_{0}\sup_{s \in [0,t]} \left|\left(a_{(n)}-a_{(n-1)}\right)\left(s, x \right)\right|_{p}. \notag
\end{align}
The second term holds
\begin{align}
&\left|\triangle^{-\frac{1}{2}} \left(\nabla\left(a_{(n)}-a_{(n-1)}\right)\cdot \nabla \z_{(n-1)}(s)\right)\right|\\
 \ls &4\sup\limits_{s \in [0,t]}\left|\left(a_{(n)}-a_{(n-1)}\right)\left(s, x \right)\right| \left|\nabla \z_{(n-1)}\left(s, x \right)\right|. \notag
\end{align}
%\textcolor[rgb]{1.00,0.00,0.00}{Denote
%$$f_{1}=\triangle^{-\frac{1}{2}} \left(\nabla\left(a_{(n)}-a_{(n-1)}\right)(t,x)\nabla \z_{(n-1)}(t,x)\right),$$ $$f_{2}=\nabla\left(a_{(n)}-a_{(n-1)}\right)\left(t,x\right)\z_{(n-1)}(t,x) ,$$
% and extend the domain $\mathbb{T}^{N}$ periodically to the whole space $\mathds{R}^{d}$. We do Fourier transform on $f_{1}$ and $f_{2}$ respectively.
%\begin{align}
%\mathcal{F}\left(f_{1}\right)= \frac{1}{iy}\left(iy\mathcal{F}\left(a_{(n)}-a_{(n-1)}\right)(t,y)\right)\ast \left(iy\mathcal{F}\left(\z_{(n-1)}\right)(t,y)\right),
%\end{align}
%\begin{align}
%\mathcal{F}\left(f_{2}\right)=\left(iy\mathcal{F}\left(a_{(n)}-a_{(n-1)}\right)(t,y)\right)\ast \mathcal{F}\left(\z_{(n-1)}\right)(t,y)
%\end{align}}
Hence, we have
\begin{align}
&\int_{0}^{t}\frac{M_{1}}{(t-s)^{\frac{1}{2}}}\left| S_{(n)}(t-s) \triangle^{-\frac{1}{2}} \left(\nabla\left(a_{(n)}-a_{(n-1)}\right)\cdot\nabla \z_{(n-1)}(s)\right) \right|_{p}\d s\notag\\
\ls &\int_{0}^{t}\frac{M}{(t-s)^{\frac{1}{2}+\frac{N}{2p}}}\left| \triangle^{-\frac{1}{2}} \left(\nabla\left(a_{(n)}-a_{(n-1)}\right)\cdot\nabla \z_{(n-1)}(s)\right) \right|_{\frac{p}{2}}\d s \notag\\
\ls &\int_{0}^{t}\frac{4M}{(t-s)^{\frac{1}{2}+\frac{N}{2p}}}\left(\int_{\mathbb{T}^{N}}\left(\sup\limits_{s \in [0,t]}\left|\left(a_{(n)}-a_{(n-1)}\right)\left(s,x \right)\right| \left|\nabla \z_{(n-1)}(s)\right|\right)^{\frac{p}{2}}\d x\right)^{\frac{2}{p}} \d s \\
\ls &\int_{0}^{t}\frac{4M}{(t-s)^{\frac{1}{2}+\frac{N}{2p}}} \left|\sup\limits_{s \in [0,t]} \left(a_{(n)}-a_{(n-1)}\right)\left(s,x \right)\right|_{p}\sup\limits_{s \in [0,t]}\left| \z_{(n-1)}(s,x)\right|_{1,p}\d s \notag\\
\ls & t^{\frac{1}{2}\left(1-\frac{N}{p}\right)} 4 M K_{0}\sup\limits_{s \in [0,t]} \left|\left(a_{(n)}-a_{(n-1)}\right)\left(s, x\right)\right|_{p}.\notag
\end{align}
Therefore, we conclude that
\begin{align}
\left| \z_{(n)}(t) -\z_{(n-1)}(t)\right|_{p}
\ls t^{\frac{1}{2}\left(1-\frac{N}{p}\right)}\frac{20\mu}{\bar{\rho}}M K_{0} \sup\limits_{s \in [0,t]}\left|\left(a_{(n)}-a_{(n-1)}\right)\left(x \right)(s,x)\right|_{p}.
\end{align}
 From \eqref{contraction of a} and the estimates of $\left|a_{(n)}-a_{(n-1)}\right|_{p}$,
 it holds that
\begin{align}\label{beta involved contraction of z}
&\left| \z_{(n)}(t) -\z_{(n-1)}(t)\right|_{p}\\
\ls &t^{1+\frac{1}{2}\left(1-\frac{N}{p}\right)}\frac{40\mu}{\bar{\rho}}M^{2}K_{0}^{3}\sup\limits_{s \in [0,t]}\left(\left|\left(\v_{(n-1)}-\v_{(n-2)}\right)(s,x)\right|_{p}+\left|\left(\z_{(n-1)}-\z_{(n-2)}\right)(s,x)\right|_{p}\right). \notag
\end{align}
Subsequently, we need to estimate $\left|\v_{(n-1)}-\v_{(n-2)}\right|_{p}$. Noting that
\begin{align}
& \partial_{t}\v_{(n)}(t) - \frac{\mu}{\bar{\rho}\left(1+a_{(n)}\right)}\triangle\v_{(n)}(t)\\
=&  -\left(\v_{(n-1)}(t)+\z_{(n-1)}(t)\right)\cdot\nabla\left(\v_{(n-1)}(t)+\z_{(n-1)}(t)\right)- \nabla Q\left(a_{(n)},\u_{(n-1)}\right),\notag
\end{align}
we have
\begin{align}\label{equation for bar w -w}
&\left(\v_{(n)}-\v_{(n-1)}\right)_{t}- \frac{\mu}{\bar{\rho}\left(1+a_{(n)}\right)}\triangle\left(\v_{(n)}-\v_{(n-1)}\right)\notag\\
=& B\left(\v_{(n-1)}+\z_{(n-1)}\right)-B\left(\v_{(n-2)}+\z_{(n-2)}\right) \\
&-\left(\nabla Q\left(a_{(n)},\u_{(n-1)}\right)-\nabla Q\left(a_{(n-1)},\u_{(n-2)}\right)\right) + \left(\frac{\mu}{\bar{\rho}\left(1+a_{(n)}\right)}-\frac{\mu}{\bar{\rho}\left(1+a_{(n-1)}\right)}\right) \triangle \v_{(n-1)}.\notag
\end{align}
The mild solutions of \eqref{equation for bar w -w} are written in the form of
\begin{align}\label{equation for bar v -v}
&\left(\v_{(n)}-\v_{(n-1)}\right) \notag\\
=&S(t-s) \left(\left(\v_{(n)}-\v_{(n-1)}\right)(0)\right)\notag\\
&+\int_{0}^{t} S(t-s) \left(B\left(\v_{(n-1)}+\z_{(n-1)}\right)(s)-B\left(\v_{(n-2)}+\z_{(n-2)}\right)(s)\right)\d s\\
&- \int_{0}^{t} S(t-s)\left(\nabla Q\left(a_{(n)},\u_{(n-1)}\right)-\nabla Q\left(a_{(n-1)},\u_{(n-2)}\right)\right) \d s \notag \\
&+ \int_{0}^{t} S(t-s) \left(\frac{\mu}{\bar{\rho}\left(1+a_{(n)}\right)}-\frac{\mu}{\bar{\rho}\left(1+a_{(n-1)}\right)}\right) \triangle \v_{(n-1)} \d s.\notag
\end{align}
Since
\begin{equation}
\left|S(t) f\right|_{q} \ls C t^{-\frac{N}{2\eth}}\left|f\right|_{p},\\
\end{equation}
 where $\frac{1}{\eth}=\frac{1}{p}-\frac{1}{q}$, we have
 \begin{equation}
\left|S(t) \left(\v_{(n)}-\v_{(n-1)}\right)(0)\right|_{p}\ls C \left|\left(\v_{(n)}-\v_{(n-1)}\right)(0)\right|_{p}=0.\\
\end{equation}
Specially, it holds that
 \begin{equation}
\left|S(t) f\right|_{p} \ls M_{2} t^{-\frac{N}{2p}}\left|f\right|_{\frac{p}{2}}. \\
\end{equation}
For $p>N$, there holds
\begin{align}
&\left|\int_{0}^{t} S(t-s)\left(B\left(\v_{(n-1)}+\z_{(n-1)}\right)(s)-B\left(\v_{(n-2)}+\z_{(n-2)}\right)(s)\right)\d s\right|_{p}\notag\\
\ls &\int_{0}^{t} \left| S(t-s) \left(-\nabla\cdot\left(\u_{(n-1)}\otimes \u_{(n-1)} \right)+\nabla\cdot\left(\u_{(n-2)}\otimes\u_{(n-2)} \right)\right)\right|_{p}\d s\\
\ls &\int_{0}^{t}\left|- S(t-s) \nabla\cdot \left(\left(\u_{(n-1)}  \right)\otimes\left(\left(\u_{(n-1)}-\u_{(n-2)} \right) \right)\right.\right.\notag\\
&\left. \qquad -S(t-s) \nabla\cdot\left(\u_{(n-2)}\otimes\left(\u_{(n-1)}-\u_{(n-2)} \right) \right)\right|_{p}\d s.\notag
\end{align}
The first term in the right of the above inequality, satisfies
\begin{align}
 &  \int_{0}^{t} \left| S(t-s)\nabla\cdot \left(\u_{(n-1)} \otimes\left(\u_{(n-1)}-\u_{(n-2)}\right)\right)\right|_{p}\d s \notag\\
\ls & \int_{0}^{t} \frac{M_{1}}{(t-s)^{\frac{1}{2}}}\left| S(t-s) \left(\u_{(n-1)} \otimes\left(\u_{(n-1)}-\u_{(n-2)}\right)\right)\right|_{p}\d s \notag\\
\ls &  \int_{0}^{t} \frac{M}{(t-s)^{\frac{1}{2}+\frac{N}{2p}}} \left|\left(\u_{(n-1)} \otimes\left(\u_{(n-1)}-\u_{(n-2)}\right)\right)\right|_{\frac{p}{2}}\d s  \\
\ls &   \left(\int_{0}^{t} \frac{M}{(t-s)^{\frac{1}{2}+\frac{N}{2p}}} \d s\right) \left|\sup\limits_{s\in [0,t]} \left(\left(\u_{(n-1)} \right)\otimes\left(\u_{(n-1)}-\u_{(n-2)}\right)\right)\right|_{\frac{p}{2}} \notag\\
\ls &   t^{\frac{1}{2}\left(1-\frac{N}{p}\right)}M \sup\limits_{s\in [0,t]}\left| \u_{(n-1)}\right|_{p}   \sup\limits_{s\in [0,t]}\left| \left(\u_{(n-2)} -\u_{(n-1)} \right)\right|_{p} .\notag
\end{align}
Similarly, we estimate
\begin{align}
 &\int_{0}^{t} \left| S(t-s) \nabla\cdot \left(\left(\u_{(n-2)} \right)\otimes\left( \u_{(n-2)} -\u_{(n-1)}  \right)\right)\right|_{p}\d s\\
 \ls &  Mt^{\frac{1}{2}\left(1-\frac{N}{p}\right)} \sup\limits_{s\in [0,t]}\left| \u_{(n-2)}  \right|_{p} \sup\limits_{s\in [0,t]}\left| \left( \u_{(n-2)} -\u_{(n-1)}  \right)\right|_{p}.\notag
\end{align}
%By Theorem \ref{Lp for z}, we obtain
%\begin{align}
%  &\sup\limits_{s\in [0,t]}\left| \v_{(n-1)}(s)+\v_{(n-2)}(s) +\z_{(n-2)} +\z_{(n-1)}\right|_{p}\notag\\
%\ls & \sup\limits_{s\in [0,t]}\left(\left| \v_{(n-1)}(s)\right|_{p}+\left| \v_{(n-2)}(s)\right|_{p}+\left| \z_{(n-2)} \right|_{p}+ \left| \z_{(n-1)} \right|_{p} \right)\\
%%\ls & 3^{\frac{2r-1}{2}} \left(\mathbb{E}\left[\sup\limits_{s\in (0,t)}\left(\left(\int_{\mathbb{T}^{d}}| \left|\v_{(n-1)}(s)\right|^{p}\d x \right)^{\frac{2r}{p}}+\left(\int_{\mathbb{T}^{d}} \left|\v_{(n-2)}(s)\right|^{p}\d x \right)^{\frac{2r}{p}}+ 2\left(\int_{\mathbb{T}^{d}} \left|\z(s)\right|^{p}\d x \right)^{\frac{2r}{p}}\right)\right]\right)^{\frac{1}{2}}\notag\\
%%\ls & 3^{\frac{2r-1}{2}} \left(\mathbb{E}\left[\sup\limits_{s\in (0,t)}\left(\left(\int_{\mathbb{T}^{d}} \left|\v_{(n-1)}(s)\right|^{2r}\d x \right)+\left(\int_{\mathbb{T}^{d}} \left|\v_{(n-2)}(s)\right|^{2r}\d x \right)+ 2\left(\int_{\mathbb{T}^{d}} \left|\z(s)\right|^{2r}\d x \right)\right)\right]\right)^{\frac{1}{2}}\notag\\
%\ls &(4M+2) K_{0}(t). \notag
%\end{align}
In summary, we estimate
\begin{align}\label{contraction of B(u)}
&\left|\int_{0}^{t} S(t-s)\left(B\left(\v_{(n-1)}+\z_{(n-1)}\right)(s)-B\left(\v_{(n-2)}+\z_{(n-2)}\right)(s)\right)\d s\right|_{p}\\
\ls &M(4M+2)K_{0} t^{\frac{1}{2}\left(1-\frac{N}{p}\right)}\left( \sup\limits_{s\in [0,t]}\left| \left( \v_{(n-2)} -\v_{(n-1)} \right)\right|_{p}+ \sup\limits_{s\in [0,t]}\left|\left( \z_{(n-2)} - \z_{(n-1)}\right)\right|_{p}\right). \notag
\end{align}

For the pressure term, it holds
\begin{align}\label{nabla Qn- nabla Qn-1}
 & \nabla Q\left(a_{(n)},\u_{(n-1)}\right)\d t-\nabla Q\left(a_{(n-1)},\u_{(n-2)}\right)\d t\notag\\
=& \left(\nabla \triangle^{-1}\left(\nabla \u_{(n-2)}: \nabla \u_{(n-2)}^{T}\right)-\nabla \triangle^{-1}\left(\nabla \u_{(n-1)}: \nabla \u_{(n-1)}^{T}\right)\right)\d t \notag\\
& + \nabla \triangle^{-1}\left(\nabla \cdot \left(\frac{\mu}{\rho_{(n-1)}}\triangle \v_{(n-2)}\right)-\nabla \cdot \left(\frac{\mu}{\rho_{(n)}}\triangle \v_{(n-1)}\right)\right)\d t.\notag
\end{align}
The first term in the right hand side of \eqref{nabla Qn- nabla Qn-1} holds
\begin{align}
&\left(\nabla \triangle^{-1}\left(\nabla \u_{(n-2)}: \nabla \u_{(n-2)}^{T}\right)-\nabla \triangle^{-1}\left(\nabla \u_{(n-1)}: \nabla \u_{(n-1)}^{T}\right)\right)\d t\notag\\
=&\left(\nabla \left(\u_{(n-2)}\otimes \u_{(n-2)}\right)-\nabla \left(\u_{(n-1)}\otimes \u_{(n-1)}\right)\right)\d t,
\end{align}
which can be estimated similarly to the bilinear term \eqref{contraction of B(u)}.
 The second term in the right hand side of \eqref{nabla Qn- nabla Qn-1} is estimated as
\begin{align}
&\left|\int_{0}^{t} S(t-s) \nabla \triangle^{-1}\left(\nabla \cdot \left(\frac{\mu}{\rho_{(n-1)}}\triangle \v_{(n-2)}\right)-\nabla \cdot \left(\frac{\mu}{\rho_{(n)}}\triangle \v_{(n-1)}\right)\right) \d s \right|_{p} \notag \\
\ls &\int_{0}^{t}\left| S(t-s) \nabla \triangle^{-\frac{1}{2}}\left(\left(\frac{\mu}{\rho_{(n-1)}}\triangle \v_{(n-2)}\right)- \left(\frac{\mu}{\rho_{(n)}}\triangle \v_{(n-1)}\right)\right)\right|_{p}\d s \notag \\
= &\int_{0}^{t}\left| S(t-s) \nabla \triangle^{-\frac{1}{2}}\nabla\cdot\left(\frac{\mu}{\rho_{(n-1)}}\nabla \v_{(n-2)} - \frac{\mu}{\rho_{(n)}}\nabla \v_{(n-1)}\right)\right|_{p}\d s \\
&- \int_{0}^{t}\left| S(t-s) \nabla \triangle^{-\frac{1}{2}}\left(\nabla \left(\frac{\mu}{\rho_{(n-1)}}\right)\nabla \v_{(n-2)} - \nabla \left( \frac{\mu}{\rho_{(n)}}\right)\nabla \v_{(n-1)}\right)\right|_{p}\d s \notag \\
= & \int_{0}^{t}\left| S(t-s) \nabla \left(\frac{\mu}{\rho_{(n-1)}}\nabla \v_{(n-2)} - \frac{\mu}{\rho_{(n)}}\nabla \v_{(n-1)}\right)\right|_{p}\d s \notag\\
&- \int_{0}^{t}\left| S(t-s) \nabla \triangle^{-\frac{1}{2}}\left(\nabla \left(\frac{\mu}{\rho_{(n-1)}}\right)\nabla \v_{(n-2)} - \nabla \left( \frac{\mu}{\rho_{(n)}}\right)\nabla \v_{(n-1)}\right)\right|_{p}\d s. \notag
\end{align}
We estimate
\begin{align}
&\int_{0}^{t}\left| S(t-s) \nabla \left(\frac{\mu}{\rho_{(n-1)}}\nabla \v_{(n-2)} - \frac{\mu}{\rho_{(n)}}\nabla \v_{(n-1)}\right)\right|_{p}\d s \notag\\
\ls & \int_{0}^{t}\frac{\mu M}{\bar{\rho}(t-s)^{\frac{1}{2}}}\left| S(t-s) \left(\frac{\mu}{\rho_{(n-1)}}\nabla \v_{(n-2)} - \frac{\mu}{\rho_{(n)}}\nabla \v_{(n-1)}\right)\right|_{p}\d s\notag\\
\ls & \int_{0}^{t}\frac{\mu M}{\bar{\rho}(t-s)^{\frac{1}{2}+\frac{N}{2p}}}\left|\left(\frac{\mu}{\rho_{(n-1)}}\nabla \v_{(n-2)} - \frac{\mu}{\rho_{(n)}}\nabla \v_{(n-1)}\right)\right|_{\frac{p}{2}}\d s \notag\\
\ls & \frac{4\mu M}{\bar{\rho}}  \int_{0}^{t} \frac{1}{(t-s)^{\frac{1}{2}+\frac{N}{2p}}}\left|\left(a_{(n-1)}-a_{(n)}\right)\nabla\v_{(n-1)}(s)\right|_{\frac{p}{2}}\d s \\
&+\frac{4\mu M}{\bar{\rho}}  \int_{0}^{t} \frac{1}{(t-s)^{\frac{1}{2}+\frac{N}{2p}}}\left|a_{(n-1)}\nabla\left(\v_{(n-1)}-\v_{(n-2)}\right)(s)\right|_{\frac{p}{2}}\d s\notag\\
\ls & \frac{4\mu M}{\bar{\rho}} \sup\limits_{s \in [0,t]}\left|\left(a_{(n-1)}-a_{(n)}\right)\right|_{p}\sup\limits_{s \in [0,t]}\left|\v_{(n-1)}(s)\right|_{1,p} \int_{0}^{t} \frac{1}{(t-s)^{\frac{1}{2}+\frac{N}{2p}}}\d s \notag\\
&+\frac{4\mu M}{\bar{\rho}} \sup\limits_{s \in [0,t]}\left|\left(\v_{(n-1)}-\v_{(n-2)}\right)\right|_{p}\sup\limits_{s \in [0,t]}\left|a_{(n-1)}(s)\right|_{1,p} \int_{0}^{t} \frac{1}{(t-s)^{\frac{1}{2}+\frac{N}{2p}}}\d s \notag\\
\ls &\frac{4\mu M}{\bar{\rho}}\sup\limits_{s \in [0,t]}\left|\left(a_{(n-1)}-a_{(n)}\right)\right|_{p}2MK_{0} t^{\frac{1}{2}\left(1-\frac{N}{p}\right)} +\frac{4\mu M}{\bar{\rho}}\sup\limits_{s \in [0,t]}\left|\left(\v_{(n-1)}-\v_{(n-2)}\right)\right|_{p}2MK_{0} t^{\frac{1}{2}\left(1-\frac{N}{p}\right)}  \notag\\
= & \frac{8\mu}{\bar{\rho}}M^{2}K_{0} t^{\frac{1}{2}\left(1-\frac{N}{p}\right)} \sup\limits_{s \in [0,t]}\left|\left(a_{(n-1)}-a_{(n)}\right)\right|_{p}+\frac{8\mu}{\bar{\rho}}M^{2}K_{0} t^{\frac{1}{2}\left(1-\frac{N}{p}\right)} \sup\limits_{s \in [0,t]}\left|\left(\v_{(n-1)}-\v_{(n-2)}\right)\right|_{p}. \notag
\end{align}
As in \eqref{div inverse analysis}, we have
\begin{align}
&  \triangle^{-\frac{1}{2}}\left(\nabla \left(\frac{\mu}{\rho_{(n-1)}}\right)\nabla \v_{(n-2)} - \nabla \left( \frac{\mu}{\rho_{(n)}}\right)\nabla \v_{(n-1)}\right)\notag\\
=& \triangle^{-\frac{1}{2}}\left(\nabla \left(\frac{\mu}{\rho_{(n-1)}}\right)\nabla \left(\v_{(n-2)}-\v_{(n-1)}\right)+ \nabla \left(\frac{\mu}{\rho_{(n-1)}}-\frac{\mu}{\rho_{(n)}}\right)\nabla \v_{(n-1)}\right)\\
\ls & 4\sup\limits_{s \in [0,t]}\left|\nabla\frac{\mu}{\bar{\rho}a_{(n-1)}}\left(s, x \right)\right| \left| \left(\v_{(n-2)}-\v_{(n-1)}\right) \left(s, x \right)\right|
 + 4\frac{\mu}{\bar{\rho}}\sup\limits_{s \in [0,t]}\left|a_{(n-1)}-a_{(n)}\right| \left|\nabla \v_{(n-1)}\right|. \notag
\end{align}
Similarly we have
\begin{align}
& \int_{0}^{t}\left| S(t-s) \nabla \triangle^{-\frac{1}{2}}\left(\nabla \left(\frac{\mu}{\rho_{(n-1)}}\right)\nabla \v_{(n-2)} - \nabla \left( \frac{\mu}{\rho_{(n)}}\right)\nabla \v_{(n-1)}\right)\right|_{p}\d s\notag\\
\ls & \frac{8\mu}{\bar{\rho}}M^{2}K_{0} t^{\frac{1}{2}\left(1-\frac{N}{p}\right)} \sup\limits_{s \in [0,t]}\left|\left(a_{(n-1)}-a_{(n)}\right)\right|_{p}+\frac{4\mu M }{\bar{\rho}} K_{0} t^{\frac{1}{2}\left(1-\frac{N}{p}\right)} \sup\limits_{s \in [0,t]}\left|\left(\v_{(n-1)}-\v_{(n-2)}\right)\right|_{p}.\notag
\end{align}
In summary, for the pressure term, there holds
\begin{align}
 & \left|\int_{0}^{t} S(t-s)\left( \nabla Q\left(a_{(n)},\u_{(n-1)}\right)-\nabla Q\left(a_{(n-1)},\u_{(n-2)}\right)\right) \d s \right|_{p} \notag \\
\ls &  M(4M+2)K_{0} t^{\frac{1}{2}\left(1-\frac{N}{p}\right)} \sup\limits_{s\in [0,t]}\left| \left( \u_{(n-2)} -\u_{(n-1)}  \right)\right|_{p}\notag\\
 &+\left(\frac{16\mu}{\bar{\rho}}M^{2}K_{0} t^{1+\frac{1}{2}\left(1-\frac{N}{p}\right)}
\right) \cdot \left(\sup\limits_{s \in [0,t]}\left|\left(\u_{(n-1)}-\u_{(n-2)}\right)(s,x)\right|_{p}\right)\\
&+\left(\frac{8\mu}{\bar{\rho}} M^{2}K_{0}+\frac{4\mu M }{\bar{\rho}} K_{0}\right) t^{\frac{1}{2}\left(1-\frac{N}{p}\right)}\sup\limits_{s \in [0,t]}\left|\left(\v_{(n-1)}-\v_{(n-2)}\right)(s,x)\right|_{p} \notag\\
\ls & \left( M(4M+2)K_{0} t^{\frac{1}{2}\left(1-\frac{N}{p}\right)}+\frac{16\mu}{\bar{\rho}}M^{2}K_{0} t^{1+\frac{1}{2}\left(1-\frac{N}{p}\right)}\right) \sup\limits_{s\in [0,t]} \left| \left(\z_{(n-1)} -\z_{(n-2)} \right)\right|_{p}\notag\\
&+ \left( \left( M(4M+2)K_{0} +\frac{8\mu}{\bar{\rho}} M^{2}K_{0}+\frac{4\mu M }{\bar{\rho}} K_{0}\right)t^{\frac{1}{2}\left(1-\frac{N}{p}\right)}+\frac{16\mu}{\bar{\rho}}M^{2}K_{0} t^{1+\frac{1}{2}\left(1-\frac{N}{p}\right)}\right)\notag\\
&\qquad \sup\limits_{s \in [0,t]}\left|\left(\v_{(n-1)}-\v_{(n-2)}\right)(s,x)\right|_{p} .\notag
\end{align}

The last integral in \eqref{equation for bar v -v} is estimated similarly as follows:
\begin{align}\label{last integral in bar v -v}
& \left|\int_{0}^{t} \bar{S}(t-s) \left(\frac{\mu}{\bar{\rho}\left(1+a_{(n)}\right)}-\frac{\mu}{\bar{\rho}\left(1+a_{(n-1)}\right)}\right) \triangle \v_{(n-1)} \d s\right|_{p} \notag\\
\ls & \int_{0}^{t} \left|\bar{S}(t-s)\left(\frac{\mu}{\bar{\rho}\left(1+a_{(n)}\right)}-\frac{\mu}{\bar{\rho}\left(1+a_{(n-1)}\right)}\right) \triangle \v_{(n-1)} \right|_{p}\d s \notag \\
= &  \int_{0}^{t} \left|\bar{S}(t-s)\nabla\cdot  \triangle^{-\frac{1}{2}}\left(\left(\frac{\mu}{\bar{\rho}\left(1+a_{(n)}\right)}-\frac{\mu}{\bar{\rho}\left(1+a_{(n-1)}\right)}\right) \triangle \v_{(n-1)} \right)\right|_{p}\d s \notag \\
\ls &  \int_{0}^{t} \frac{M_{1}}{(t-s)^{\frac{1}{2}}}\left|\bar{S}(t-s)\triangle^{-\frac{1}{2}}\left(\left(\frac{\mu}{\bar{\rho}\left(1+a_{(n)}\right)}-\frac{\mu}{\bar{\rho}\left(1+a_{(n-1)}\right)}\right) \triangle \v_{(n-1)} \right)\right|_{p}\d s \notag \\
\ls &  \int_{0}^{t} \frac{M}{(t-s)^{\frac{1}{2}+\frac{N}{2p}}}\left|\triangle^{-\frac{1}{2}}\left(\left(\frac{\mu}{\bar{\rho}\left(1+a_{(n)}\right)}-\frac{\mu}{\bar{\rho}\left(1+a_{(n-1)}\right)}\right) \triangle \v_{(n-1)} \right)\right|_{\frac{p}{2}}\d s  \\
=& \int_{0}^{t} \frac{\mu M}{\bar{\rho}(t-s)^{\frac{1}{2}+\frac{N}{2p}}}\left|\triangle^{-\frac{1}{2}}\nabla\cdot\left(\left(\frac{1}{\left(1+a_{(n)}\right)}
          -\frac{1}{\left(1+a_{(n-1)}\right)}\right)\nabla\v_{(n-1)}(s)\right)\right.\notag\\
&\quad \left.-\triangle^{-\frac{1}{2}} \left(\nabla\v_{(n-1)}(s) \cdot\nabla \left(\frac{1}{\left(1+a_{(n)}\right)}-\frac{1}{\left(1+a_{(n-1)}\right)}\right)\right)\right|_{\frac{p}{2}}\d s \notag \\
%=& \int_{0}^{t} \frac{\mu}{\bar{\rho}(t-s)^{\frac{1}{2}+\frac{N}{2p}}}\left|\left(\frac{1}{\left(1+a_{(n)}\right)}-\frac{1}{\left(1+a_{(n-1)}\right)}\right)\nabla\v_{(n-1)}(s)\right. \notag \\
%&\quad \left.-\triangle^{-\frac{1}{2}} \left(\nabla\v_{(n-1)}(s) \nabla \left(\frac{1}{\left(1+a_{(n)}\right)}-\frac{1{\left(1+a_{(n-1)}\right)}\right)\right)\right)\right|_{\frac{p}{2}}\d s \notag\\
\ls & \int_{0}^{t} \frac{\mu M}{\bar{\rho}(t-s)^{\frac{1}{2}+\frac{N}{2p}}}\left|\left(\frac{1}{\left(1+a_{(n)}\right)}-\frac{1}{\left(1+a_{(n-1)}\right)}\right)\nabla\v_{(n-1)}(s)\right|_{\frac{p}{2}}\d s \notag \\
   & + \int_{0}^{t} \frac{\mu M}{\bar{\rho}(t-s)^{\frac{1}{2}+\frac{N}{2p}}}\left|\triangle^{-\frac{1}{2}} \left(\nabla\v_{(n-1)}(s)\cdot \nabla \left(\frac{1}{\left(1+a_{(n)}\right)}-\frac{1}{\left(1+a_{(n-1)}\right)}\right)\right)\right|_{\frac{p}{2}}\d s. \notag
\end{align}
 The first term in the right hand-side of \eqref{last integral in bar v -v} is estimated by
\begin{align}
  &\frac{1}{\bar{\rho}} \int_{0}^{t} \frac{\mu M}{(t-s)^{\frac{1}{2}+\frac{N}{2p}}}\left|\left(\frac{1}{\left(1+a_{(n)}\right)}-\frac{1}{\left(1+a_{(n-1)}\right)}\right)\nabla\v_{(n-1)}(s)\right|_{\frac{p}{2}}\d s \notag \\
\ls & \frac{4\mu M}{\bar{\rho}}  \int_{0}^{t} \frac{1}{(t-s)^{\frac{1}{2}+\frac{N}{2p}}}\left|\left(a_{(n-1)}-a_{(n)}\right)\nabla\v_{(n-1)}(s)\right|_{\frac{p}{2}}\d s \\
\ls & \frac{4\mu M}{\bar{\rho}} \sup\limits_{s \in [0,t]}\left|\left(a_{(n-1)}-a_{(n)}\right)\right|_{p}\sup\limits_{s \in [0,t]}\left|\v_{(n-1)}(s)\right|_{1,p} \int_{0}^{t} \frac{1}{(t-s)^{\frac{1}{2}+\frac{N}{2p}}}\d s \notag\\
\ls &\frac{8\mu M^{2}K_{0}}{\bar{\rho}}\sup\limits_{s \in [0,t]}\left|\left(a_{(n-1)}-a_{(n)}\right)\right|_{p} t^{\frac{1}{2}\left(1-\frac{N}{p}\right)}
=  \frac{8\mu M^{2}K_{0}}{\bar{\rho}} t^{\frac{1}{2}\left(1-\frac{N}{p}\right)} \sup\limits_{s \in [0,t]}\left|\left(a_{(n-1)}-a_{(n)}\right)\right|_{p}. \notag
\end{align}
The second term in \eqref{last integral in bar v -v} is estimated by
\begin{align}
    & \int_{0}^{t} \frac{\mu M}{\bar{\rho}(t-s)^{\frac{1}{2}+\frac{N}{2p}}}\left|\triangle^{-\frac{1}{2}} \left(\nabla\v_{(n-1)}(s) \cdot \nabla \left(\frac{1}{1+a_{(n)}}-\frac{1 }{1+a_{(n-1)}}\right)\right)\right|_{\frac{p}{2}}\d s \notag \\
\ls &  \int_{0}^{t} \frac{4\mu M}{\bar{\rho}(t-s)^{\frac{1}{2}+\frac{N}{2p}}}\sup\limits_{s \in [0,t]}\left|\left(\v_{(n-1)}(s) \nabla\left(\frac{1}{1+a_{(n)}}-\frac{1 }{1+a_{(n-1)}}\right)\right)\right|_{\frac{p}{2}}\d s \notag \\
\ls & \int_{0}^{t}\frac{4\mu M}{\bar{\rho}(t-s)^{\frac{1}{2}+\frac{N}{2p}}}\sup\limits_{s \in [0,t]}\left|\left(\frac{1}{1+a_{(n)}}-\frac{1 }{1+a_{(n-1)}}\right)\right|_{p}\sup\limits_{s \in [0,t]}\left| \nabla \v_{(n-1)}(s)\right|_{p}\d s \\
=& \int_{0}^{t}\frac{16\mu M}{\bar{\rho}(t-s)^{\frac{1}{2}+\frac{N}{2p}}}\sup\limits_{s \in [0,t]}\left|\left( a_{(n)}- a_{(n-1)}\right)\right|_{p}\sup\limits_{s \in [0,t]}\left| \v_{(n-1)}(s)\right|_{1,p}\d s, \notag\\
\ls & \frac{32\mu M^{2}K_{0}}{\bar{\rho}} t^{\frac{1}{2}\left(1-\frac{N}{p}\right)}\sup\limits_{s \in [0,t]}\left|\left(a_{(n-1)}-a_{(n)}\right)\right|_{p}.\notag
\end{align}
With \eqref{contraction of a}, it holds
\begin{align}
& \left|\int_{0}^{t} \bar{S}(t-s) \left(\frac{\mu}{\bar{\rho}\left(1+a_{(n)}\right)}-\frac{\mu}{\bar{\rho}\left(1+a_{(n-1)}\right)}\right) \triangle \v_{(n-1)} \d s\right|_{p} \notag\\
\ls & \frac{40\mu M^{2}K_{0}}{\bar{\rho}} t^{\frac{1}{2}\left(1-\frac{N}{p}\right)}
 \left(\sup\limits_{s \in [0,t]}\left|a_{(n-1)}-a_{(n-2)}\right|_{p}\right)\\
\ls &  \frac{80\mu M^{3}K_{0}^{2}}{\bar{\rho}} t^{\frac{1}{2}\left(1-\frac{N}{p}\right)+1} \left(\sup\limits_{s \in [0,t]}\left|\v_{(n-1)}-\v_{(n-2)}\right|_{p}+\sup\limits_{s \in [0,t]}\left|\z_{(n-1)}-\z_{(n-2)}\right|_{p}\right).\notag
\end{align}

In conclusion, combining the above estimates, we have
\begin{align}
& \left|\left(\v_{(n)}-\v_{(n-1)}\right)(t)\right|_{p} \notag \\
\ls &  \left|\int_{0}^{t} S(t-s)\left( B\left(\v_{(n-1)}+\z_{(n-1)}\right)-B\left(\v_{(n-2)}+\z_{(n-2)}\right)\right)\d s\right|_{p} \notag\\
    & +  \left|\int_{0}^{t} S(t-s)\left( \nabla Q\left(a_{(n)},\u_{(n-1)}\right)-\nabla Q\left(a_{(n-1)},\u_{(n-2)}\right)\right) \d s \right|_{p} \notag \\
    &+ \left|\int_{0}^{t} \bar{S}(t-s) \left(\frac{\mu}{\bar{\rho}\left(1+a_{(n)}\right)}-\frac{\mu}{\bar{\rho}\left(1+a_{(n-1)}\right)}\right) \triangle \v_{(n-1)} \d s\right|_{p} \\
\ls &M(4M+2)K_{0} t^{\frac{1}{2}\left(1-\frac{N}{p}\right)}\left( \sup\limits_{s\in [0,t]}\left| \left( \v_{(n-2)} -\v_{(n-1)} \right)\right|_{p}+ \sup\limits_{s\in [0,t]}\left|\left( \z_{(n-2)} - \z_{(n-1)}\right)\right|_{p}\right) \notag\\
  &\left( M(4M+2)K_{0} t^{\frac{1}{2}\left(1-\frac{N}{p}\right)}+\frac{16\mu}{\bar{\rho}}M^{2}K_{0} t^{1+\frac{1}{2}\left(1-\frac{N}{p}\right)}\right) \sup\limits_{s\in [0,t]} \left| \left(\z_{(n-1)} -\z_{(n-2)} \right)\right|_{p}\notag\\
&+ \left( \left( M(4M+2)K_{0} +\frac{8\mu}{\bar{\rho}} M^{2}K_{0}+\frac{4\mu M }{\bar{\rho}} K_{0}\right)t^{\frac{1}{2}\left(1-\frac{N}{p}\right)}+\frac{16\mu}{\bar{\rho}}M^{2}K_{0} t^{1+\frac{1}{2}\left(1-\frac{N}{p}\right)}\right)\notag\\
&\qquad \sup\limits_{s \in [0,t]}\left|\left(\v_{(n-1)}-\v_{(n-2)}\right)(s,x)\right|_{p}\notag\\
&+  \frac{80\mu M^{3}K_{0}^{2}}{\bar{\rho}} t^{\frac{1}{2}\left(1-\frac{N}{p}\right)+1} \left(\sup\limits_{s \in [0,t]}\left|\v_{(n-1)}-\v_{(n-2)}\right|_{p}+\sup\limits_{s \in [0,t]}\left|\z_{(n-1)}-\z_{(n-2)}\right|_{p}\right) \notag\\
\triangleq &  \alpha(t)\left(\sup\limits_{s \in [0,t]}\left|\z_{(n-1)}-\z_{(n-2)}\right|_{p}+\sup\limits_{s \in [0,t]}\left|\v_{(n-1)}-\v_{(n-2)}\right|_{p}\right) .\notag
\end{align}
where
\begin{align}
\alpha(t)=&\left( \left( 2M(4M+2)K_{0} +\frac{8\mu}{\bar{\rho}} M^{2}K_{0}+\frac{4\mu M }{\bar{\rho}} K_{0}\right)t^{\frac{1}{2}\left(1-\frac{N}{p}\right)}\right)\\
&+\frac{16\mu}{\bar{\rho}}M^{2}K_{0} t^{1+\frac{1}{2}\left(1-\frac{N}{p}\right)}+ \frac{80\mu M^{3}K_{0}^{2}}{\bar{\rho}} t^{\frac{1}{2}\left(1-\frac{N}{p}\right)+1}.\notag
\end{align}
Taking $\bar{T}$ such that
\begin{align}
\alpha (\bar{T}) <\frac{1}{2} ,\quad \bar{T}\ls T_{1},
\end{align}
we have
\begin{align}\label{alpha involved induction of v}
    \sup\limits_{s \in [0,t]} \left|\v_{(n)}-\v_{(n-1)}\right|_{p}\ls \alpha \left(\sup\limits_{s \in [0,t]}\left|\v_{(n-1)}-\v_{(n-2)}\right|_{p}+\sup\limits_{s \in [0,t]}\left|\z_{(n-1)}-\z_{(n-2)}\right|_{p}\right).
\end{align}
%then the operator $F$ is a contraction map. \textbf{By fixed point theorem, there exists a local mild solution to the linearized system.} \\
Moreover, recalling that \eqref{beta involved contraction of z},
\begin{align}
 \beta \triangleq \bar{T}^{1+\frac{1}{2}\left(1-\frac{N}{p}\right)}\frac{40\mu}{\bar{\rho}} M^{2} K_{0}^{3},
\end{align}
$\beta\ls \alpha $, by combining \eqref{alpha involved induction of v} and \eqref{beta involved contraction of z}, estimates for $ \sup\limits_{s \in [0,t]} \left| \z_{(n)}(t) -\z_{(n-1)}(t)\right|_{p}$, we obtain
\begin{align}
&  \sup\limits_{s \in [0,t]}\left| \z_{(n)}(t) -\z_{(n-1)}(t)\right|_{p}+  \sup\limits_{s \in [0,t]}\left|\v_{(n)}-\v_{(n-1)}\right|_{p} \\
\ls &2\alpha  \left( \sup\limits_{s \in [0,t]}\left|\v_{(n-1)}-\v_{(n-2)}\right|_{p} + \sup\limits_{s \in [0,t]}\left|\z_{(n-1)}-\z_{(n-2)}\right|_{p}\right),\notag
\end{align}
where $\alpha < \frac{1}{2}$. Therefore, $\left\{\z_{(n)}\right\}$ and $\left\{\v_{(n)}\right\}$ are the Cauchy sequences in $ \in C\left([0, \bar{T}]; L^{p}\left(\mathbb{T}^{N}\right)\right)$.
Since
 \begin{align}
  \sup\limits_{s \in [0,t]} \left| a_{(n)}-a_{(n-1)}\right|_{p}
\ls  t 2MK_{0}^{2}\left( \sup\limits_{s \in [0,t]}\left|\v_{(n-1)}-\v_{(n-2)}\right|_{p} + \sup\limits_{s \in [0,t]}\left|\z_{(n-1)}-\z_{(n-2)}\right|_{p}\right),
\end{align}
 $\left\{a_{(n)}\right\}$ is a Cauchy sequence as well. Therefore, the local existence and uniqueness of the mild solution to \eqref{sto inhomo incom NS} is obtained, with $a \in C\left([0, \bar{T}]; L^{p}\left(\mathbb{T}^{N}\right)\right)$, $\u \in  C\left([0, \bar{T}]; L^{p}\left(\mathbb{T}^{N}\right)\right)$.

\smallskip
\smallskip

\section{The global existence}

For any $T$, given the initial condition $\rho_{0}\in H^{3}\left(\mathbb{T}^{N}\right)$, $\u_{0}\in H^{3}\left(\mathbb{T}^{N}\right)$ $\mathbb{P}$ {\rm a.s.}, there exist the energy solution
$\rho \in C\left([0, T];  H^{3}\left(\mathbb{T}^{N}\right) \cap C^{1,\ell}\left(\mathbb{T}^{N}\right)\right)$ and $\u\in C\left([0, T];  H^{3}\left(\mathbb{T}^{N}\right)\right)$, $\mathbb{P}$ {\rm a.s.} Namely, the energy solution will not blow up until any time $T$. By Zorn's lemma, the energy estimate holds globally in time,  $\rho \in C\left([0, \infty); H^{3}\left(\mathbb{T}^{N}\right)\cap C^{1,\ell}\left(\mathbb{T}^{N}\right)\right)$, $\u\in  C\left([0, \infty); H^{s}\left(\mathbb{T}^{N}\right)\right)$. Since a local $L^{p}$ mild solution already exists in $\left[0, \bar{T}\right]$, by the uniform estimate Lemma \ref{priori estimate of v and z} in the following subsection and Sobolev's embedding, there holds
\begin{align}
\left|\u\left(t\right)\right|_{2,p}\ls \left\|\u\left(t\right)\right\|_{3} \ls \hat{C}(T) \left\|\u_{0}\right\|_{3},~ p\ls 6, ~t \in \left[0, \bar{T}\right].
\end{align}
 With a modifying of the definition of $K_{0}$, if we define
\begin{align}
K_{0}=\max \left\{2 \hat{C}(T)\left\|\u_{0}\right\|_{3}, 2 \left(\left| a_{0}(x)\right|_{2,p} + T \hat{C}(T) \left\|\u_{0}\right\|_{3}\right), C_{5}, 1 \right\},
\end{align}
then for the new initial data $\u\left(\bar{T}\right)$, there holds
\begin{align}
\left|\u\left(\bar{T}\right)\right|_{2,p} \ls \left\|\u\left(\bar{T}\right)\right\|_{3} \ls \hat{C}(T) \left\|\u_{0}\right\|_{3},~ \forall k \in \mathds{N}^{+}, ~t\in [\bar{T}, \bar{T}+\tilde{T}].
\end{align}
Moreover, it holds that
\begin{align}
\left|a\left(\bar{T},x\right)\right|_{2,p} \ls \left| a_{0}(x)\right|_{2,p} + \bar{T} \left|\u\left(\bar{T},x\right)\right|_{2,p} \ls  \left| a_{0}(x)\right|_{2,p} + T \hat{C}(T) \left\|\u_{0}\right\|_{3}.
\end{align}
 Thus, there exists a unique solution in a new time interval $[\bar{T}, \bar{T}+\tilde{T}]$.
Repeating this procedure, the initial data in the time interval $[\bar{T}+(k-1)\tilde{T}, \bar{T}+k\tilde{T}]$, satisfies
\begin{align}
\left|\u\left(\bar{T}+(k-1)\tilde{T}\right)\right|_{2,p} \ls \left\|\u\left(\bar{T}+(k-1)\tilde{T}\right)\right\|_{3} \ls \hat{C}(T) \left\|\u_{0}\right\|_{3},~ \forall k \in \mathds{N}^{+},
\end{align}
and
\begin{align}
\left|a\left(\bar{T}+(k-1)\tilde{T},x\right)\right|_{2,p} \ls & \left| a_{0}(x)\right|_{2,p} + \left(\bar{T}+(k-1)\tilde{T}\right) \left|\u\left(\bar{T}+(k-1)\tilde{T},x\right)\right|_{2,p}\\
 \ls & \left| a_{0}(x)\right|_{2,p} + T \hat{C}(T) \left\|\u_{0}\right\|_{3}. \notag
\end{align}
Hence, we extend the mild solution to any $T$ globally.

 \subsection{The a priori energy estimates of $\u$}\label{priori estimate of v}

We need to do the {\it a priori} energy estimate for $\u$ in this section since we will utilize the incompressible condition.
\begin{lemma}\label{priori estimate of v and z}
%Given $\z\in L^{r}\left(\Omega; C\left(0,T; H^{3}\left(\mathbb{T}^N\right)\right)\right)$,
Let $a\in C\left([0,T]; H^{3}\left(\mathbb{T}^{N}\right)\right)$, $\u_{0}\in H^{3}\left(\mathbb{T}^{N}\right)$. Then, there holds
\begin{align}
\u \in  C\left([0,T]; H^{3}\left(\mathbb{T}^{N}\right)\right)\cap L^{2}\left(0,T; H^{4}\left(\mathbb{T}^{N}\right)\right), ~\mathbb{P} ~{\rm a.s.}
\end{align}
\end{lemma}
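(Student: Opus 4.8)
The plan is to derive the bound by energy estimates carried out \emph{directly} on $\u$, applying It\^o's formula successively to the energies $\tfrac12\|\u\|^2$, $\tfrac12\|\nabla\u\|^2$, $\tfrac12\|\triangle\u\|^2$, $\tfrac12\|\nabla\triangle\u\|^2$, which together control $\|\u\|_3^2$. Three structural ingredients drive the argument. First, since $a\in C([0,T];H^3(\mathbb{T}^{N}))$ and $\tfrac12\ls 1+a\ls\tfrac32$, the coefficient $c(t,x)=\frac{\mu}{\bar{\rho}(1+a)}$ is bounded above and below by $\frac{\mu}{2\bar{\rho}}$ and lies in $H^3(\mathbb{T}^{N})$, so $-c\triangle$ is uniformly elliptic and the viscous term supplies a coercive dissipation $\int_{\mathbb{T}^{N}}c\,|\nabla D^{k}\u|^2\gs\frac{\mu}{2\bar{\rho}}\|\nabla D^{k}\u\|^2$ at each order. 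Second, the incompressibility $\odiv\u=0$ makes the leading convective and pressure contributions vanish, e.g. $\int_{\mathbb{T}^{N}}(\u\cdot\nabla\u)\cdot\u=0$ and $\int_{\mathbb{T}^{N}}\nabla Q\cdot\u=-\int_{\mathbb{T}^{N}}Q\,\odiv\u=0$. Third, every commutator produced by moving a derivative $D^{k}$ past the non-constant $c$ is of lower order and is controlled by $\|a\|_{3}$ together with the Sobolev embeddings $H^3(\mathbb{T}^{N})\hookrightarrow C^{1,\ell}(\mathbb{T}^{N})$ and $H^2(\mathbb{T}^{N})\hookrightarrow L^\infty(\mathbb{T}^{N})$, valid for $N\ls 3$.

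At each order I would estimate the remaining terms as follows. After differentiation the convective term $\u\cdot\nabla\u$ is handled by distributing derivatives, using $\odiv\u=0$ to cancel the top-order piece, and bounding the rest by products of $\|\u\|_3$-factors, absorbing any genuinely top-order contribution into the dissipation via Young's inequality. The pressure term is eliminated through its elliptic representation \eqref{equation of nabla Q}, which expresses $\nabla Q$ in terms of $\nabla\u:\nabla\u^T$ and $\frac{\mu\nabla\rho}{\rho^2}\triangle\u$, so that $\nabla Q$ is controlled by $\|\u\|_3$ and $\|a\|_3$ after the singular-integral bounds for $\nabla\triangle^{-1}\nabla\cdot$ and $\nabla\triangle^{-\frac12}$ already employed in \S\ref{Onto mapping in a ball}. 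The stochastic term is treated by the stochastic Fubini theorem and the Burkholder--Davis--Gundy inequality exactly as in Lemma \ref{Lp for z}, with the four sums in \eqref{C_Phi} bounded by $C_{\Phi}$ furnishing the needed control of $\Phi_k,\nabla\Phi_k,\triangle\Phi_k,\nabla\triangle\Phi_k$. Taking $r$-th moments and summing the four orders yields, after Gr\"onwall's inequality, a bound $\mathbb{E}\big[\sup_{[0,T]}\|\u\|_3^{2r}+\big(\int_0^T\|\u\|_4^2\,\d s\big)^{r}\big]\ls C(T,C_{\Phi},\u_0,a)^r$, the $L^2(0,T;H^4)$ part arising precisely from the top-order coercive dissipation of the $\|\nabla\triangle\u\|^2$ estimate. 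Passing $r\ra\infty$ through Lemma \ref{arbitrariness of r} converts these moment bounds into the $\mathbb{P}$ a.s. statement $\u\in C([0,T];H^3(\mathbb{T}^{N}))\cap L^2(0,T;H^4(\mathbb{T}^{N}))$. Equivalently, one may use the splitting $\u=\v+\z$, absorb the already-controlled $\z$ (Lemma \ref{Lp for z} and \eqref{C5 formula}) into the forcing, and run the same estimates on the pathwise-deterministic $\v$.

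The main obstacle is the top-order bookkeeping of the variable-coefficient viscosity together with the pressure. Concretely, in the $\|\nabla\triangle\u\|^2$ estimate the integration by parts against $-c\,\triangle\u$ and the differentiation of the nonlinear and pressure terms generate commutators in which several derivatives fall on $c$ (hence on $a$) while the remaining derivatives on $\u$ reach the fourth order; these fourth-order contributions, including the one carried by $\frac{\mu\nabla\rho}{\rho^2}\triangle\u$ inside $\nabla Q$, must be absorbed into the sole available coercive term, the fourth-order dissipation $\gtrsim\frac{\mu}{2\bar{\rho}}\|\u\|_4^2$. Making this absorption rigorous is exactly what forces the hypothesis $a\in C([0,T];H^3)$ and the uniform lower bound on $1+a$, and it is the step where the estimate is most delicate; the lower-order estimates, by contrast, close routinely once the coercivity and the incompressibility cancellations are in place.
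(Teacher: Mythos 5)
Your proposal is correct and follows essentially the same route as the paper: It\^o's formula applied order by order to $\|D^{k}\u\|^{2}$, $k=0,\dots,3$, with incompressibility killing the convective and pressure contributions, stochastic Fubini plus Burkholder--Davis--Gundy for the noise, Gr\"onwall for the $r$-th moments, and Lemma \ref{arbitrariness of r} to pass to the $\mathbb{P}$ a.s. statement. The only cosmetic difference is that the paper multiplies the It\^o identity by $\rho=\bar{\rho}(1+a)$ to render the viscous term constant-coefficient before integrating (using the lower bound $\rho\gs c_{0}$), whereas you keep the variable coefficient and absorb the resulting commutators into the dissipation; both are fine, and your discussion of the top-order commutator bookkeeping is in fact more explicit than the paper's.
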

\begin{proof}
From
\begin{equation}\label{equation of v for energy estimate}
\d\u(t)-\frac{ \mu }{\bar{\rho}\left(1+a\right)}\triangle\u(t)\d t=(\u(t))\cdot\nabla(\u(t))\d t- \nabla Q\left(a,\u\right)\d t+ \Phi \d W,
\end{equation}
by It\^o's formula, it holds that
\begin{align}\label{Ito foumula}
&\d \frac{|\u|^{2}}{2}=\u\cdot \d \u +\frac{1}{2} \langle \Phi\d W, \Phi \d W \rangle \notag\\
=&\frac{ \mu }{\bar{\rho}\left(1+a\right)}\triangle \u(t)\cdot \u\d t + B(\u,\u)\cdot \u\d t - \nabla Q\left(a,\u\right)\cdot\u\d t\\
&+\Phi \cdot\u \d W+ \frac{1}{2}\left|\Phi\right|^{2}\d t,\notag
\end{align}
where we denotes $B(\u,\u):=(\u(t))\cdot\nabla(\u(t))$.
%We integrate \eqref{Ito foumula} with respect to $x$, and we have
%\begin{align}
%&\d \int_{\mathbb{T}^{N}}\frac{|\u|^{2}}{2}\d x-\int_{\mathbb{T}^{N}}\frac{\mu}{\bar{\rho}\left(1+a\right)}\triangle\u\cdot \u \d x \d t\\
%=&\int_{\mathbb{T}^{N}}B(\u,\u)\cdot \u\d x \d t- \int_{\mathbb{T}^{N}}\nabla Q\left(a,\u\right)\cdot \u \d x\d t+\int_{\mathbb{T}^{N}}\Phi\cdot\u \d W \d x+ \int_{\mathbb{T}^{N}} \frac{1}{2}\left|\Phi\right|^{2}\d x\d t. \notag
%\end{align}
For the sake of energy estimate, we multiply the above formula by density. Then, we have
\begin{align}\label{Ito foumula for u times rho}
 \rho \d \frac{|\u|^{2}}{2}=&\mu\triangle \u\cdot \u\d t + B(\rho\u,\u)\cdot \u\d t - \nabla P\cdot\u\d t\\
&+\rho\Psi\cdot\u \d W+ \frac{1}{2}\rho\left|\Psi\right|^{2}\d t. \notag
\end{align}
We integrate ~\eqref{Ito foumula for u times rho} with respect to ~$x$. Noticing that ~$\left(1+a\right)\bar{\rho} \geqslant \frac{1}{2}\bar{\rho}$, we have
\begin{align}
\int_{\mathbb{T}^{N}}\rho \d \frac{|\u|^{2}}{2} \d x \geqslant c_{0}\int_{\mathbb{T}^{N}}  \d \frac{|\u|^{2}}{2} \d x.
\end{align}
That is,
\begin{align}
\int_{0}^{t}\int_{\mathbb{T}^{N}}  \d \frac{|\u|^{2}}{2} \d x \d s \ls  \frac{1}{c_{0}}\int_{0}^{t}\int_{\mathbb{T}^{N}}\rho \d \frac{|\u|^{2}}{2} \d x \d s,
\end{align}
which can be controlled by the integral of the right hand side of \eqref{Ito foumula for u times rho}. We consider the right hand side of \eqref{Ito foumula for u times rho} term by term.
By manipulating integration by parts, we have
\begin{align}
-\mu \int_{0}^{t}\int_{\mathbb{T}^{N}}\triangle\u\cdot \u  \d x \d s=\mu\int_{0}^{t}\int_{\mathbb{T}^{N}} \left|\nabla \u\right|^{2} \d x \d s.
\end{align}
Using integration by parts, we get
\begin{align}
\langle B(\mathbf{a}, \mathbf{b}), \mathbf{c}\rangle=-\langle B(\mathbf{a},\mathbf{c}), \mathbf{b}\rangle, \quad\langle B(\mathbf{a}, \mathbf{b}), \mathbf{b}\rangle=0.
\end{align}
Thus, we obtain
\begin{align}
 &\int_{\mathbb{T}^{N}}B\left(\u,\u\right)\cdot\u\d x=0.
\end{align}
Again, integration by parts gives
\begin{align}\label{no pressure in energy estimate}
 -\int_{\mathbb{T}^{N}} \nabla Q\left(a,\u\right) \cdot \u \d x
=\int_{\mathbb{T}^{N}} Q\left(a,\u\right) \operatorname{div} \u \d x
=0.
\end{align}
By stochastic Fubini's theorem and B\"urkh\"older-Davis-Gundy's inequality, for any ~$r\geqslant 2$, we have
\begin{align}
&\mathbb{E}\left[\left|\int_{0}^{t} \int_{\mathbb{T}^{N}} \rho\Psi \cdot \u  \d W\d x \right|^{r}\right]
 = \mathbb{E}\left[\left|\int_{0}^{t} \int_{\mathbb{T}^{N}} \rho\Psi \cdot \u \d x \d W \right|^{r}\right]\notag \\
 \ls & \mathbb{E}\left[\left|\int_{0}^{t} \left|\int_{\mathbb{T}^{N}} \rho\Psi \cdot \u \d x \right|^{2}\d s \right|^{\frac{r}{2}}\right]
 \ls \mathbb{E}\left[\left|\int_{0}^{t} \left|\int_{\mathbb{T}^{N}} C \rho|\u|\d x \right|^{2} \d s \right|^{\frac{r}{2}}\right] \notag\\
 \ls & \mathbb{E}\left[\left|\int_{0}^{t} \left|\int_{\mathbb{T}^{N}} C\left(\rho^{2}+ |\u|^{2}\right)\d x \right|^{2} \d s \right|^{\frac{r}{2}}\right] \\
 \ls & \mathbb{E}\left[\left|\int_{0}^{t} \left|\int_{\mathbb{T}^{N}} C\left(\rho_{0}^{2}+ |\u|^{2}\right)\d x \right|^{2} \d s \right|^{\frac{r}{2}}\right] \notag \\
 \ls & C_{r}\mathbb{E}\left[ \left| t\int_{\mathbb{T}^{N}} \rho_{0}^{2}\d x\right|^{r}  \right]+ \mathbb{E}\left[  \int_{0}^{t} \left| C \int_{\mathbb{T}^{N}} |\u|^{2} \d x\right|^{r} \d s \right], \notag
\end{align}
where ~$C_{r}$ is the $r$-th power of some constant. 
It is straightforward to estimate
\begin{align}
\mathbb{E}\left[\left|\int_{0}^{t} \int_{\mathbb{T}^{N}} \frac{1}{2}\rho\left|\Psi\right|^{2}\d x\d s \right|^{r}\right] \ls C_{r}\mathbb{E}\left[ \left| t\int_{\mathbb{T}^{N}} \rho_{0}^{2}\d x\right|^{r} \right].
\end{align} 
In summary, the following inequality holds
\begin{align}
&\mathbb{E}\left[\left|\int_{\mathbb{T}^{N}}\frac{1}{2}\left|\u\right|^{2}\d x \right|^{r}\right]+\mathbb{E}\left[\left|\int_{0}^{t}\mu \int_{\mathbb{T}^{N}}\left|\nabla \u\right|^{2} \d x \right|^{r}\right]\\
\ls &\mathbb{E}\left[\left|\int_{\mathbb{T}^{N}}\frac{1}{2}\left|\u_{0}\right|^{2}\d x \right|^{r}\right]+C_{r}\mathbb{E}\left[ \left| t\int_{\mathbb{T}^{N}} \rho_{0}^{2}\d x\right|^{r} \right]+ C\int_{0}^{t}\mathbb{E}\left[\left| \int_{\mathbb{T}^{N}} |\u|^{2} \d x\right|^{r}\right] \d s, \notag
\end{align}
where ~$C_{r}$ is the $r$-th power of some constant. 
By Gr\"onwall's inequality, the following estimates hold
\begin{align}
\mathbb{E}\left[\left|\int_{\mathbb{T}^{N}}\frac{1}{2}\left|\u\right|^{2}\d x \right|^{r}\right] \ls & C(r, t,\rho_{0})\mathbb{E}\left[\left|\int_{\mathbb{T}^{N}}\frac{1}{2}\left|\u_{0}\right|^{2}\d x \right|^{r}\right] \notag\\
\ls &C(r, T,\rho_{0})\mathbb{E}\left[\left|\int_{\mathbb{T}^{N}}\frac{1}{2}\left|\u_{0}\right|^{2}\d x \right|^{r}\right],\\
\mathbb{E}\left[\left|\int_{0}^{t}\int_{\mathbb{T}^{N}}\left|\nabla \u\right|^{2} \d x \right|^{r}\right]  %\ls & C(t)\mathbb{E}\left[\left|\int_{\mathbb{T}^{N}}\frac{1}{2}\left|\u_{0}\right|^{2}\d x \right|^{r}\right]\notag\\
 \ls &C(r, T,\rho_{0})\mathbb{E}\left[\left|\int_{\mathbb{T}^{N}}\frac{1}{2}\left|\u_{0}\right|^{2}\d x \right|^{r}\right],
\end{align}
where $C(r, T,\rho_{0})$ is the $r$-th power of some constant dependent of $T$ and $\rho_{0}$.
Taking the first-order derivative to \eqref{equation of v for energy estimate}, applying ~It\^o's formula for ~$\left|\nabla \u\right|^{2}$, integrating with respect to $x$ and $t$, similarly we can obtain the high order estimates:
\begin{align}
&\mathbb{E}\left[\left|\int_{\mathbb{T}^{N}}\frac{1}{2}\left|\nabla^{k} \u\right|^{2}\d x \right|^{r}\right] \ls C(r, T,\rho_{0})\mathbb{E}\left[\left|\int_{\mathbb{T}^{N}}\frac{1}{2}\left|\nabla^{k} \u_{0}\right|^{2}\d x \right|^{r}\right] ,\\
&\mathbb{E}\left[\left|\int_{0}^{T}\int_{\mathbb{T}^{N}}\left|\nabla^{k+1} \u\right|^{2} \d x \d s \right|^{r}\right]  \ls C(r, T,\rho_{0})\mathbb{E}\left[\left|\int_{\mathbb{T}^{N}}\frac{1}{2}\left|\nabla^{k} \u_{0}\right|^{2}\d x \right|^{r}\right] ,
\end{align}
~$k=1, ~2, ~3$.

By Lemma \ref{arbitrariness of r}, we have the following estimates:
\begin{align}
&\int_{\mathbb{T}^{N}}\frac{1}{2}\left|\nabla^{k} \u\right|^{2}\d x  \ls C(T)\int_{\mathbb{T}^{N}}\frac{1}{2}\left|\nabla^{k} \u_{0}\right|^{2}\d x ,\\
& \int_{0}^{t}\int_{\mathbb{T}^{N}}\left|\nabla^{k+1} \u\right|^{2} \d x  \ls C(T)\int_{\mathbb{T}^{N}}\frac{1}{2}\left|\nabla^{k} \u_{0}\right|^{2}\d x,
\end{align}
$\mathbb{P}$ a.s., $k=0,1, 2, 3$, where $C(T)$ depends on $C_{\Phi}$ and $r$.
\hfill $\square$
\end{proof}

\begin{remark}
If $a_{0}\in C\left([0,T]; H^{3}\left(\mathbb{T}^{N}\right)\right)$, $\u\in C\left([0,T]; H^{3}\left(\mathbb{T}^{N}\right)\right)$, $\mathbb{P}$ {\rm a.s.}, then there holds\\
$a \in  C\left(0,T; H^{3}\left(\mathbb{T}^{N}\right)\right)$, $\mathbb{P}$ {\rm a.s.}
%Since we used a generalized Gr\"onwall's inequality and the norm of solutions are bounded by the constants dependent on time, we can not estimate the decay rate as time goes to infinity.
\end{remark}
\begin{remark}
Since we used a generalized Gr\"onwall's inequality, and that the norm of solutions are bounded by the constants dependent of time, we can not estimate the decay rate as time goes to infinity.
\end{remark}
\begin{remark}
If the viscosity coefficient $\mu\left(\rho\right)$ is density-dependent, and $\mu\left(\rho\right)\geqslant \underline{\mu}>0 $, then there is such a global mild solution of the system \eqref{sto inhomo incom NS} as well, by repeating the above arguments. However, our method could not apply to the case that $\mu\left(\rho\right)$ degenerates at the vacuum.
\end{remark}
\subsection{Remark on the deterministic problem}
For the deterministic case, there is no need to estimates the stochastic integral. Without separating the systems, by similar estimate for the deterministic terms, and applying Banach's fixed point theorem,  one can obtain the following conclusion.
\begin{corollary}\label{deterministic theorem}
For $1\ls N\ls 3$, $N<p\ls 6$, given $\rho_{0}\in H^{3}\left(\mathbb{T}^{N}\right)$, $0<c_{0} \leq \rho_0 \leq C_{0}$, $\u_{0}\in H^{3}\left(\mathbb{T}^{N}\right)$, if there exists a constant $C>0$, such that
\begin{align}
\left\|\rho_{0}\right\|_{H^{3}},\ls C,~ \left\|\u_{0}\right\|_{H^{3}} \ls C, %\quad \sum\limits_{k}\sum\limits_{j}\lambda_{j}^{N+1}\sup_{t\in [0,T]}\left|\left(\Phi_{k},\tilde{e}_{\mathbf{j}}\right)\right|^{2}\ls C,
\end{align}
 then there exists a unique global mild solution to IINS: \par
 \qquad $\rho\in C\left([0, T];  L^{p}\left(\mathbb{T}^{N}\right)\right)$, $\u\in  C\left([0, T];  L^{p}\left(\mathbb{T}^{N}\right)\right)$.\\
   Besides, there exists a global energy solution to IINS: \par
\qquad $\rho\in  C\left([0, T];  H^{3}\left(\mathbb{T}^{N}\right)\right)$, $\u\in  C\left([0,T]; H^{3}\left(\mathbb{T}^{N}\right)\right)\cap L^{2}\left(0,T; H^{4}\left(\mathbb{T}^{N}\right)\right)$.
\end{corollary}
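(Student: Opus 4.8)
The plan is to run the entire argument of Theorem \ref{main theorem} with the noise switched off, i.e.\ with $\Phi\,\d W\equiv 0$. The immediate simplification is that the splitting $\u_{(n)}=\v_{(n)}+\z_{(n)}$ becomes unnecessary: since $\z_{(n)}\equiv 0$, one works directly with $\u_{(n)}$, and all the stochastic apparatus (It\^o's formula, the B\"urkh\"older--Davis--Gundy inequality, the $r$-th moment estimates of Lemma \ref{Lp for z} and Lemma \ref{arbitrariness of r}) drops out. First I would note that under $N<p\ls 6$ and $1\ls N\ls 3$ the Sobolev embedding $H^{3}(\mathbb{T}^{N})\hookrightarrow W^{2,p}(\mathbb{T}^{N})$ holds, so the hypotheses $\rho_{0},\u_{0}\in H^{3}$ supply the $W^{2,p}$ regularity required to feed the iteration. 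The semigroup theory of Section 2 is untouched: for $a_{(n)}\in C([0,T];C^{1,\ell}(\mathbb{T}^{N}))$ the operator $\A_{(n)}(t)=\frac{\mu\triangle}{\bar{\rho}(1+a_{(n)})}$ still generates the time- and space-dependent $C_{0}$-semigroup $S_{(n)}(t)$ of Lemma \ref{infinitesimal generator}, with the decay bound \eqref{time decay of fractional derivative of semigroup} and the commutation $\nabla S_{(n)}=S_{(n)}\nabla$.

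With this setup, the local theory proceeds exactly as in Section 3. The mass equation gives $a_{(n)}(t,x)=a_{0}(x-\int_{0}^{t}\u_{(n-1)}\,\d s)$, and the chain-rule computation \eqref{onto mapping of a deterministic} yields the onto estimate $|a_{(n)}|_{2,p}\ls|a_{0}|_{2,p}(1+tC_{6}\sup_{s}|\u_{(n-1)}|_{2,p})$, now with no $\z$ contribution. For the velocity I would write $\u_{(n)}$ through the Duhamel formula analogous to \eqref{mild solutions u n} and estimate the bilinear term and the pressure term exactly as in subsection \ref{Onto mapping in a ball}, again deleting every occurrence of $\z_{(n-1)}$; the condition $p>N$ keeps the kernel $\int_{0}^{t}(t-s)^{-\frac12-\frac{N}{2p}}\,\d s$ integrable and produces a factor $t^{\frac12(1-N/p)}$ in front of each nonlinear contribution. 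Choosing $K_{0}=\max\{\,2|a_{0}|_{2,p},\,2|\u_{0}|_{2,p},\,1\,\}$ and a short time $T_{0}$ makes the map onto the ball $\{\,|\u|_{2,p}\ls 2MK_{0}\,\}$, and the contraction estimates for $a_{(n)}-a_{(n-1)}$ and $\u_{(n)}-\u_{(n-1)}$ (the deterministic parts of \eqref{contraction of a}, \eqref{contraction of B(u)} and the pressure contraction) give a strict contraction in $C([0,\bar{T}];L^{p}(\mathbb{T}^{N}))$ for $\bar{T}$ small. Banach's fixed point theorem then delivers a unique local mild solution with $a,\u\in C([0,\bar{T}];L^{p}(\mathbb{T}^{N}))$.

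The a priori energy estimate is where the deterministic case is genuinely lighter. Testing the momentum equation against $\rho\u$ and integrating, the convective term vanishes by $\langle B(\u,\u),\u\rangle=0$ and the pressure term vanishes by $\int_{\mathbb{T}^{N}}Q\,\odiv\u\,\d x=0$, both consequences of $\odiv\u=0$; since no stochastic integral is present, no It\^o correction, no B\"urkh\"older--Davis--Gundy step, and no passage $r\to\infty$ are needed, and a plain Gr\"onwall argument on $\|\u\|_{s}$ replaces the moment machinery of Lemma \ref{priori estimate of v and z}. Differentiating up to third order gives $\u\in C([0,T];H^{3})\cap L^{2}(0,T;H^{4})$ with a bound $\hat{C}(T)\|\u_{0}\|_{3}$ that is uniform on $[0,T]$, and then $a\in C([0,T];H^{3}\cap C^{1,\ell})$ follows from the transport equation. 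This uniform bound is exactly what lets one restart the local construction on $[\bar{T},\bar{T}+\tilde{T}]$, $[\bar{T}+\tilde{T},\bar{T}+2\tilde{T}]$, \dots\ with a time step $\tilde{T}$ that does not shrink, so iterating (equivalently, Zorn's lemma) extends the solution to an arbitrary $T$, giving the global mild and energy solutions as claimed.

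The main obstacle I expect is not any single estimate but the mutual regularity closure: the semigroup generator requires $a_{(n)}\in C^{1,\ell}$, which via the transport equation demands $\u_{(n-1)}\in W^{2,p}$, while the $W^{2,p}$ onto estimate for $\u_{(n)}$ in turn consumes $a_{(n)}\in W^{2,p}$; one must verify that the initial $H^{3}$ regularity, propagated through Sobolev embedding and the a priori $H^{3}$ bound, keeps both chains consistent at every iteration and on every continuation interval. Once this bookkeeping is in place the argument is strictly a simplification of the stochastic proof already carried out.
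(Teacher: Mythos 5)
Your proposal is correct and follows essentially the same route as the paper: the paper's own justification of Corollary \ref{deterministic theorem} is precisely the remark that one drops the splitting $\u=\v+\z$, discards the stochastic estimates, and reruns the onto-mapping, contraction, a priori energy, and continuation arguments of Sections 3--4 verbatim. Your elaboration (the $H^{3}\hookrightarrow W^{2,p}$ embedding, the simplified Gr\"onwall step without It\^o/BDG, and the regularity bookkeeping between $a_{(n)}$ and $\u_{(n)}$) fills in exactly the details the paper leaves implicit.
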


 \smallskip
 \smallskip
\section{Appendix}

%\begin{theorem}\label{Trace theorem}(Trace theorem)
%Assume $U$ is bounded and $\partial U$ is $C^{1}$. Then there exists a bounded linear operator
%\begin{equation}
%T: W^{1,p}(U)\ra L^{p}(\partial U)
%\end{equation}
%such that
%\begin{enumerate}
%  \item $Tu =u\mid_{\partial U}$ if $u\in  W^{1,p}(U)\cap C\left(\bar{U}\right)$,
%  \item $\left\|Tu\right\|_{L^{p}(\partial U)}\ls C \left\|u\right\|_{W^{1,p}(U)}$,
%\end{enumerate}
%for each $u\in W^{1,p}(U)$, with the constant $C$ depending only on $p$ and $U$.
%\end{theorem}

We provide an overview of the fundamental theory concerning stochastic analysis.
Let $E$ be a separable Banach space, and $\mathscr{B}(E)$ be the $\sigma$-field of its Borel subsets, respectively. Let $\left(\Omega, \mathcal{F}, \mathbb{P}\right)$ be a stochastic basis. A filtration $\mathcal{F}=\left(\mathcal{F}_{t}\right)_{t \in \mathbf{T}}$ is a family of $\sigma$-algebras on $\Omega$ indexed by $\mathbf{T}$ such that $\mathcal{F}_{s} \subseteq$ $\mathcal{F}_{t} \subseteq \mathcal{F}$, $s \leq t$, $s, t \in \mathbf{T}$. $\left(\Omega, \mathcal{F}, \mathbb{P}\right)$ is also called a filtered space.
 We first list some definitions.
\begin{enumerate}
  \item [1.] {\bf $E$-valued random variables. \cite{Da-Prato-Zabczyk2014}}
 For $(\Omega, \mathscr{F})$ and $(E, \mathscr{E})$ being two measurable spaces, a mapping $X$ from $\Omega$ into $E$, such that the set $\{\omega \in \Omega: X(\omega) \in A\}=\{X \in A\}$ belongs to $\mathscr{F}$, for arbitrary $A \in \mathscr{E}$, is called a measurable mapping or a random variable from $(\Omega, \mathscr{F})$ into $(E, \mathscr{E})$ or an $E$-valued random variable.

  \item [2.]{\bf Strongly measurable operator valued random variables. \cite{Da-Prato-Zabczyk2014}} Let $\mathcal{U}$ and $\mathcal{H}$ be two separable Hilbert spaces which can be infinite-dimensional, and denote by $L(\mathcal{U}, \mathcal{H})$ the set of all linear bounded operators from $\mathcal{U}$ into $\mathcal{H}$. A functional operator $\Psi(\cdot)$ from $\Omega$ into $L(\mathcal{U}, \mathcal{H})$ is said to be strongly measurable, if for arbitrary $X \in \mathcal{U}$ the function $\Psi(\cdot) X$ is measurable, as a mapping from $(\Omega, \mathscr{F})$ into $(\mathcal{H}, \mathscr{B}(\mathcal{H}))$. Let $\mathscr{L}$ be the smallest $\sigma$-field of subsets of $L(\mathcal{U}, \mathcal{H})$ containing all sets of the form
\begin{align}
\{\Psi \in L(\mathcal{U}, \mathcal{H}): \Psi X \in A\}, \quad X \in \mathcal{U}, ~A \in \mathscr{B}(\mathcal{H}).
\end{align}
Then, $\Psi: \Omega \rightarrow L(\mathcal{U}, \mathcal{H})$ is a strongly measurable mapping from $(\Omega, \mathscr{F})$ into $(L(\mathcal{U}, \mathcal{H}), \mathscr{L})$.
\item  [3.]{\bf Law of a random variable.} For an $E$-valued random variable
 $X:(\Omega, \mathcal{F}) \rightarrow (E, \mathscr{E})$, % We denote by $\sigma(\mathbf{U})$ the smallest $\sigma$ -field with respect to which U is measurable. More precisely,
%\begin{equation}
%\sigma(\mathbf{U}):=\{\{\omega \in \Omega ; \mathbf{U}(\omega) \in A\} ; A \in \mathcal{A}\}
%\end{equation}
%and $\sigma(\mathbf{U}) \subset \mathcal{F}$. In addition,
we denote by $\mathcal{L}[X]$ the law of $X$ on $E$, that is, $\mathcal{L}[X]$ is the probability measure on $(E, \mathscr{E})$ given by
\begin{equation}
\mathcal{L}[X](A)=\mathbb{P}[X \in A], \quad A \in \mathscr{E}.\\
\end{equation} %%P22 in Feireisl's book.
%In measure theory, a pushforward measure %(also push-forward, push-forward, or image measure)
%is obtained by using a measurable function, transferring a measure from one measurable space to another space.
%
%\item [13.] {\bf Dirac measure}. Let $(E, \mathscr{B}(E))$ be a measurable space. Given $x \in E$, the Dirac measure $\delta_x$ at $x$ is the measure defined by
%\begin{equation}
%\delta_x(A):= \begin{cases}1, & x \in A \\ 0, & x \notin A\end{cases}
%\end{equation}
%for each measurable set $A \subseteq E$. In this paper, there holds
%$$\delta_{\bar{\rho}}=\mathcal{L}[\bar{\rho}](A) = \mathbb{P}\left[\left\{\omega\in \Omega|\bar{\rho}(x)\in A \right\}\right]=1.$$

 \item [4.]%\begin{definition}
{\bf Stochastic process. \cite{Da-Prato-Zabczyk2014}}
A stochastic process $X$ is defined as an arbitrary family $X = \{X_t\}_{t\in \mathbf{T}}$ of $E$-valued random variables $X_t$, $t \in \mathbf{T}$. $X$ is also regarded as a mapping from $\Omega$ into a Banach space like $C([0, T] ; E)$ or $L^p=L^p(0, T ; E), 1 \leq p<+\infty$, by associating $\omega \in \Omega$ with the trajectory $X(\cdot, \omega)$.
\item [5.]{\bf Cylindrical Wiener Process valued in Hilbert space.} \cite{Da-Prato-Zabczyk2014} % An $\mathbb{R}^{m}$ -valued stochastic process $W$ is called an $\mathcal{F}$-adapted Wiener process, provided:
%\begin{itemize}
%  \item  $W(t)$ is $\mathcal{F}_{t}$-adapted;\quad  $W(0)=0, \quad  \mathbb{P}$ a.s.;\quad
%  \item $W$ has continuous trajectories: $t \mapsto W(t)$ is continuous $\mathbb{P}$ a.s.; \quad
%  \item $W$ has independent increments: $W(t)-W(s)$ is independent of $\mathcal{F}_{s}$ for all $0 \leq s \leq$ $t<\infty$.
%\end{itemize}
%Let $Q$ be a trace class nonnegative operator on a Hilbert space $\mathcal{U}$.
 A $\mathcal{U}$-valued stochastic process $W(t), t \geqslant 0$, is called a cylindrical Wiener process if:
\begin{itemize}
 \item  $W(0)=0$;
 \item $W$ has continuous trajectories;
 \item $W$ has independent increments;
 \item The distribution of $(W(t)-W(s))$ is $\mathscr{N}(0,(t-s)), \quad 0\ls s < t$.
\end{itemize}
%In the setting of this paper, $Q=I$ is the identity operator.

  \item [6.]%\begin{definition}
{\bf Adapted stochastic process.} A stochastic process $X$ is $\mathcal{F}$-adapted if $X_{t}$ is $\mathcal{F}_{t}$-measurable for every $t \in \mathbf{T}$; %Assume that stochastic process $X(t)$ is indexed by $\mathbf{T}$. $X$ is
% $\mathcal{F}$-adapted or simply adapted if $X(t)$ is $\mathcal{F}_{t}$-measurable for every $t \in \mathbf{T}$;
  %\end{definition}
%\begin{definition}

\item [7.]{\bf Martingale.} The process $X$ is called integrable provided $\mathbb{E}\left[\|X_t\|\right]<+\infty$ for every $t \in \mathbf{T}$. An integrable and adapted $E$-valued process $X_t, t \in \mathbf{T}$, is a martingale, if
    \begin{itemize}
      \item  $X$ is adapted;
      \item $X_{s}=\mathbb{E}\left[X_{t} \mid \mathcal{F}_{s}\right]$, for arbitrary $t, s \in \mathbf{T},~ 0\ls s \ls  t$.
    \end{itemize}

\item [8.]\label{stopping time def}{\bf Stopping time.} On $\left(\Omega, \mathcal{F}, \mathbb{P}\right)$, a random time is a measurable mapping $\tau: \Omega \rightarrow \mathbf{T} \cup \infty$. A random time is a stopping time, if $\{\tau \leq t\} \in \mathcal{F}_{t}$ for every $t \in \mathbf{T}$.
For a process $X$ and a subset $V$ of the state space, we define the hitting time of $X$ in $V$ as
\begin{align}
\tau_{V}(\omega)=\inf \left\{\left.t \in \mathbf{T}\right| X_{t}(\omega) \in V\right\}.
\end{align}
If $X$ is a continuous adapted process and $V$ is closed, then $\tau_{V}$ is a stopping time.

 \item [9.]%\begin{definition}
{\bf Modification.}
 A stochastic process $Y$ is called a modification or a version of $X$ if
\begin{align}
\mathbb{P}(\omega \in \Omega: X(t, \omega) \neq Y(t, \omega))=0 \quad \text { for all } t \in \mathbf{T}.
\end{align}

\item [10.]%\begin{definition}
{\bf Progressive measurability.} In $\left(\Omega,\mathcal{F},\mathbb{P} \right)$, stochastic process $X$ is progressively measurable or simply progressively measurable, if, for $\omega\in \Omega$, $(\omega, s) \mapsto X(s,\omega),~ s \ls t$ is $\mathcal{F}_{t} \otimes \mathscr{B}(\mathbf{T} \cap[0, t])$-measurable for every $t \in \mathbf{T}$.

\item [11.]%\begin{definition}
{\bf Progressive measurability of continuous functions.} Let $X(t), t \in[0, T]$, be a stochastically continuous and adapted process with values in a separable Banach space $E$. Then, $X$ has a progressively measurable modification.

  \item [12.]{\bf Cross quadratic variation.} Fixing a number $T>0$, we denote by $\mathcal{M}_T^2(E)$ the space of all $E$-valued continuous, square integrable martingales $M$, such that $M(0)=0$.
If $M \in \mathcal{M}_T^2\left(\mathbb{R}^1\right)$ then there exists a unique increasing predictable process $\langle M(\cdot)\rangle$, starting from $0$, such that the process
\begin{align}
M^2(t)-\langle M(\cdot)\rangle, \quad t \in[0, T]
\end{align}
is a continuous martingale. The process $\langle M(\cdot)\rangle$ is called the quadratic variation of $M$. If $M_1, M_2 \in \mathcal{M}_T^2\left(\mathbb{R}^1\right)$ then the process
\begin{align}
\left\langle M_1(t), M_2(t)\right\rangle=\frac{1}{4}\left[\left\langle\left(M_1+M_2\right)(t)\right\rangle-\left\langle\left(M_1-M_2\right)(t)\right\rangle\right]
\end{align}
is called the cross quadratic variation of $M_1, M_2$. It is the unique, predictable process with trajectories of bounded variation, starting from 0 such that
\begin{align}
M_1(t) M_2(t)-\left\langle M_1(t), M_2(t)\right\rangle, \quad t \in[0, T]
\end{align}
is a continuous martingale.\\
For $M\in\mathcal{M}_T^2(\mathcal{H})$, where $\mathcal{H}$ is Hilbert space,
the quadratic variation is defined by
\begin{align}
\langle M(t)\rangle=\sum_{i, j=1}^{\infty}\left\langle M_i(t), M_j(t)\right \rangle  e_i \otimes e_j , \quad t \in[0, T],
\end{align}
as an integrable adapted process, where $M_i(t)$ and $M_j(t)$ are in $\mathcal{M}_T^2\left(\mathbb{R}^1\right)$. If $a \in \mathcal{H}_1, b \in \mathcal{H}_2$, then $a \otimes b$ denotes a linear operator from $\mathcal{H}_2$ into $\mathcal{H}_1$ given by the formula
\begin{align}
(a \otimes b) x=a\langle b, x\rangle_{\mathcal{H}_2}, ~x \in \mathcal{H}_2.
\end{align}
%Moreover,
%\begin{align}
%\langle M(t), a\rangle\langle M(t), b\rangle-\langle\langle\langle M(t)\rangle\rangle a, b\rangle
%\end{align}
%is a continuous $\mathbb{P}$-a.s. a nonnegative martingale.
We define a cross quadratic variation for $M^1 \in \mathcal{M}_T^2\left(\mathcal{H}_1\right)$, $M^2 \in \mathcal{M}_T^2\left(\mathcal{H}_2\right)$ where $\mathcal{H}_1$ and $\mathcal{H}_2$ are two Hilbert spaces. Namely we define
\begin{align}
\left\langle M^1(t), M^2(t)\right\rangle=\sum_{i, j=1}^{\infty}\left\langle M_i^1(t), M_j^2(t)\right\rangle e_i^1 \otimes e_j^2, \quad t \in[0, T],
\end{align}
where $\left\{e_i^1\right\}$ and $\left\{e_j^2\right\}$ are complete orthonormal bases in $\mathcal{H}_1$ and $\mathcal{H}_2$ respectively.

\item [13.]{\bf Stochastic integral.} Let $W$ be the Wiener process. Let $\Psi(t), t \in [0, T ]$, be a measurable Hilbert--Schmidt operators in $L(\mathcal{U}, \mathcal{H})$, which is set in the space $\mathcal{L}_{2}$ such that
    \begin{align}
    \mathbb{E}\left[\int_{0}^{t}\|\Psi(s)\|_{\mathcal{L}_{2}}^2  \d s\right]:=\mathbb{E} \int_0^t \langle\Psi(s),\Psi^{\star}(s)\rangle_{\mathcal{H}}\d s <+\infty,
    \end{align}
where $\langle\cdot,\cdot\rangle_{\mathcal{H}}$ means the inner product in $\mathcal{H}$.
 For the stochastic integral $\int_{0}^{t}\Psi \d W$, there holds
\begin{equation}
\mathbb{E}\left[\left(\int_{0}^{t}\Psi \d W\right)^{2}\right]=\mathbb{E}\left[\int_{0}^{t}\|\Psi(s)\|_{\mathcal{L}_{2}}^2 \d s\right].
\end{equation}
Furthermore, the following properties hold
\begin{itemize}
                                             \item Linearity: $\int(a \Psi_{1}+b \Psi_{2}) \d W=a \int \Psi_{1} \d W+b \int \Psi_{2} \d W$ for constants $a$ and $b$;
                                             \item Stopping property: $\int 1_{\{\cdot \leq \tau\}} \Psi \d W=\int \Psi \d M^{\tau}=\int_{0}^{\cdot \wedge \tau} \Psi \d W$;
                                             \item It\^o-isometry: for every $t$,
\begin{equation}
\mathbb{E}\left[\left(\int_{0}^{t} \Psi \d W\right)^{2}\right]=\mathbb{E}\left[\int_{0}^{t} \|\Psi(s)\|_{\mathcal{L}_{2}}^2 \d s\right].
\end{equation}
\end{itemize}

\end{enumerate}
\smallskip

 We list some important theorems in stochastic analysis.

\begin{enumerate}
\item [1.]{\bf It\^o's formula.} \cite{Ito1944,Da-Prato-Zabczyk2014} Assume that $\Psi$ is an $\mathcal{L}_{2}$-valued process stochastically integrable in $[0, T], \varphi$ being a $\mathcal{H}$-valued predictable process Bochner integrable on $[0, T], \mathbb{P}$-a.s., and $X(0)$ being a $\mathscr{F}_{0}$-measurable $\mathcal{\mathcal{H}}$-valued random variable. Then the following process
\begin{align}\label{form of X}
X(t)=X(0)+\int_0^t \varphi(s) d s+\int_0^t \Psi(s) \d W(s), \quad t \in[0, T]
\end{align}
is well defined. Assume that a function $F:[0, T] \times \mathcal{H} \rightarrow \mathbb{R}^1$ and its partial derivatives $F_t, F_x, F_{x x}$, are uniformly continuous on bounded subsets of $[0, T] \times \mathcal{H}$. Under the above conditions, $\mathbb{P}$-a.s., for all $t \in[0, T]$,
\begin{align}
~\qquad  F(t, X(t))= & F(0, X(0))+\int_0^t \left\langle F_x(s, X(s)), \Psi(s) \d W(s)\right\rangle_{\mathcal{H}} \\
& +\int_0^t\left\{F_t(s, X(s))+\left\langle F_x(s, X(s)), \varphi(s)\right\rangle_{\mathcal{H}}
 +\frac{1}{2} F_{x x}(s, X(s))\|\Psi(s)\|_{\mathcal{L}_{2}}^2 \right\} \d s. \notag
\end{align}
Applying the above formula for $F=\langle x, x \rangle_{\mathcal{H}}$, we have It\^o's formula for $\langle X, X \rangle_{\mathcal{H}}$. Then by
\begin{align}
\langle X, Y \rangle_{\mathcal{H}} =\frac{\langle X+Y, X+Y \rangle_{\mathcal{H}} -\langle X-Y, X-Y \rangle_{\mathcal{H}}}{4}
\end{align}
in Hilbert space,
 the following It\^o's formula holds for $X$ and $Y$ in form of \eqref{form of X},
\begin{equation}\label{Ito for XY}
\begin{aligned}
\langle X, Y \rangle_{\mathcal{H}} &= \langle X_{0}, Y_{0}\rangle_{\mathcal{H}} +\int \langle X, \d Y \rangle_{\mathcal{H}}\d s+ \int  \langle Y, \d X \rangle_{\mathcal{H}}\d s+\int \d\left\langle~\langle X,Y \rangle,  \langle X,Y \rangle~\right\rangle_{\mathcal{H}}^{\frac{1}{2}}\\
&=\langle X_{0}, Y_{0}\rangle_{\mathcal{H}} +\int \langle X, \d Y \rangle_{\mathcal{H}}\d s+ \int  \langle Y, \d X \d s \rangle_{\mathcal{H}}+\left\langle~\langle X,Y \rangle,  \langle X,Y \rangle~\right\rangle_{\mathcal{H}}^{\frac{1}{2}},\\
\end{aligned}
\end{equation}
where $\langle X,Y\rangle$ means the cross quadratic  variation of $X$ and $Y$ defined above. In this paper, for the convenience of notations, we still use  $\langle X,Y\rangle$ to denote $\left\langle~\langle X,Y \rangle,  \langle X,Y \rangle~\right\rangle_{\mathcal{H}}^{\frac{1}{2}}$.

\item [2.]{\bf Chebyshev's inequality}. Let $Y$ be a random variable in probability space $\left(\Omega, \mathcal{F}, \mathbb{P}\right)$, $\varepsilon>0$. For every $0<r<\infty$, Chebyshev's inequality reads
\begin{equation}
\mathbb{P}[\left\{|Y| \geq \varepsilon \right\}] \leq \frac{1}{\varepsilon^r}\mathbb{E}\left[|Y|^r\right] .
\end{equation}

\item [3.] {\bf Burkholder-Davis-Gundy's inequality}. \cite{BDG-inequality,Da-Prato-Zabczyk2014} Let $M$ be a continuous local martingale in $\mathcal{H}$. Let $M^{\ast}=\max\limits_{0\ls s\ls t}|M(s)|$, $m \geqslant 1$. $\langle M\rangle_{T}$ denotes the quadratic variation stopped by $T$. Then, there exist constants $K^{m}$ and $K_{m}$ such that
\begin{equation}
 K_{m} \mathbb{E}\left[\left(\langle M\rangle_{T}\right)^{m}\right]\ls \mathbb{E}\left[\left(M^{\ast}_{T}\right)^{2m}\right]\ls K^{m} \mathbb{E}\left[\left(\langle M\rangle_{T}\right)^{m}\right],
\end{equation}
for every stopping time $T$. For $m\geqslant  1$, $K^{m}=\left(\frac{2m}{2m-1}\right)^{\frac{2m(2m-2)}{2}}$, which is equivalent to $e^{m}$ as $m\ra \infty$.
Specifically, for every $m \geqslant 1$, and for every $t \geqslant 0$, there holds
\begin{equation}
\mathbb{E}\left[ \sup _{s \in[0, t]}\left|\int_0^t \Psi(s) \d W(s)\right|^{2m} \right]\leq  K^{m}\left(\mathbb{E}\left[\int_0^{t} \|\Psi(s)\|_{\mathcal{L}_{2}}^2\d s\right]\right)^{m}
\end{equation}

\item [4.] {\bf Stochastic Fubini theorem}. Assume that $(E, \mathscr{E})$ is a measurable space and let
$$
\Psi:(t, \omega, x) \rightarrow \Psi(t, \omega, x)
$$
be a measurable mapping from $\left(\Omega_T \times E, \mathscr{B}(\Omega_T) \times \mathscr{B}(E)\right)$ into $\left(\mathcal{L}^{2}, \mathscr{B}\left(\mathcal{L}^{2}\right)\right)$. Moreover, assume that
\begin{equation}
\int_E \left[\mathbb{E} \int_0^T \langle\Psi(s),\Psi^{\star}(s)\rangle_{\mathcal{H}}\d t\right]^{\frac{1}{2}} \mu(\d x)<+\infty,
\end{equation}
then $\mathbb{P}$-a.s. there holds
\begin{equation}
\int_E\left[\int_0^T \Psi(t, x) \d W(t)\right] \mu(\d x)=\int_0^T\left[\int_E \Psi(t, x) \mu(\d x)\right] \d W(t).
\end{equation}

% \item [5.]\label{Centov thm}{\bf Kolmogorov-Centov's continuity theorem.} \cite{Karatzas1988,Da-Prato-Zabczyk2014} Let $(\Omega, \mathcal{F}, \mathbb{P})$ be a probability space and $\bar{X}$ a process on $[0, T]$ with values in a complete metric space $(E, \mathscr{E})$. Suppose that
%\begin{equation}
%\mathbb{E}\left[\left|\bar{X}_{t}-\bar{X}_{s}\right|^{a}\right] \leq C|t-s|^{1+b},
%\end{equation}
%for every $s<t \leq T$ and some strictly positive constants $a, b$ and $C$. Then $\bar{X}$ admits a continuous modification $X$, $\mathbb{P}\left[\left\{X_{t}=\bar{X}_{t}\right\}\right]=1$ for every $t$, and $X$ is locally H\"older continuous for every exponent $0<\gamma<\frac{b }{a},$ namely,
%\begin{equation}
%\mathbb{P}\left[\left\{\omega: \sum_{0<t-s<h(\omega), t, s \leq T} \frac{\left|X_{t}(\omega)-X_{s}(\omega)\right|}{|t-s|^{\gamma}} \leq \delta\right\}\right]=1,
%\end{equation}
%where $h(\omega)$ is an strictly positive random variable a.s., and the constant satisfies $\delta>0$.
\end{enumerate}

\smallskip
\smallskip
  
{\bf Acknowledgements.} The research of Y. Li was supported in part
by National Natural Science Foundation of China under grants 12371221, 12161141004, and 11831011. Y. Li was also grateful to the supports by the Fundamental Research Funds for the Central Universities and Shanghai Frontiers Science Center of Modern Analysis. The research of M. Mei was supported by NSERC of Canada grant RGPIN 2022-03374 and NNSFC of China grant W2431005.

%\bibliographystyle{plain}
%\bibliography{stoincominhomoNSref}

% %It's worth noticing that the noise for this system is a kind of addictive noise, i.e., $\Phi$ is independent of $\u$. Multiplicative noise $\Phi$ could not meet the condition for $\Phi$ because we do not have the priori-estimate of $\u$. \par
%
%
%%\bibliographystyle{unsrt}%按引用顺序排
\bibliographystyle{plain}%按人名字母顺序排

\end{document}